\newtheorem{thm}{Theorem}[section]
\newtheorem{cor}[thm]{Corollary}
\newtheorem{lem}[thm]{Lemma}
\newtheorem{prop}[thm]{Proposition}
\theoremstyle{definition}
\newtheorem{rem}[thm]{Remark}
\newtheorem*{ack}{Acknowledgement}
\numberwithin{equation}{section}
\def\bR{\mathbb{R}}
\def\bC{\mathbb{C}}
\def\bN{\mathbb{N}}
\def\rd{\mathrm{d}}
\DeclareMathOperator\UU{U}
\DeclareMathOperator\SO{SO}
\DeclareMathOperator\SL{SL}
\DeclareMathOperator\vol{vol}
\DeclareMathOperator\Vol{Vol}
\DeclareMathOperator\Ree{Re}
\DeclareMathOperator\Span{Span}
\DeclareMathOperator\Ker{Ker}
\DeclareMathOperator\Imm{Im}
\title{Remarks on the self-shrinking Clifford torus}
\author{Christopher G. Evans}
\author{Jason D. Lotay}
\author{Felix Schulze}
\address{Department of Mathematics, University College London, Gower Street, London, WC1E 6BT, United Kingdom}
\email{christopher.evans.13@ucl.ac.uk, j.lotay@ucl.ac.uk,  f.schulze@ucl.ac.uk}
\date{\today}
\begin{document}

\begin{abstract}
On the one hand, we prove that the Clifford torus in $\bC^2$ is unstable for Lagrangian mean curvature flow under arbitrarily small Hamiltonian perturbations, even though it is Hamiltonian $F$-stable and locally area minimising under Hamiltonian variations.  On the other hand, we show that the Clifford torus is rigid: it is locally unique as a self-shrinker for mean curvature flow, despite having infinitesimal deformations which do not arise from rigid motions.  The proofs rely on analysing higher order phenomena: specifically, showing that the Clifford torus is not a local entropy minimiser even under Hamiltonian variations, and demonstrating that infinitesimal deformations which do not generate rigid motions are genuinely obstructed.
\end{abstract}
\maketitle

\section{Introduction}

The Clifford torus contained in the 3-sphere in $\bC^2$ is an important example of a self-shrinker in mean curvature flow.  Moreover, the Clifford torus is Lagrangian in $\bC^2$ and has  particular significance in Lagrangian mean curvature flow: it is the simplest known example of a compact Lagrangian self-shrinker in $\bC^2$, as there are no self-shrinking Lagrangian spheres in $\bC^2$ \cite{Smoczyk.Hab} (even allowing for branched immersed spheres \cite{ChenMa}).

In this article we study two related issues: stability of the Clifford torus under (Lagrangian) mean curvature flow, and rigidity of the Clifford torus as a (Lagrangian) self-shrinker.   Knowing stability would imply rigidity, but the converse is not necessarily true, as will be the case here.  
 Both issues are clearly crucial for understanding problems such as singularity formation and genericity of singularity models.

\vspace{-4pt}
\subsection{Hamiltonian instability}  Given that the Clifford torus is the simplest example of a Lagrangian self-shrinker, Neves  \cite[cf.~Question 7.4]{NevesSurvey} asked under what conditions on a Lagrangian torus would the rescaled Lagrangian mean curvature flow converge to the Clifford torus (up to translations and unitary transformations).  It is known, for example by work in \cite{LeeLue, LiZhang}, that the Clifford torus is unstable under Lagrangian mean curvature flow, even at the linear level (i.e.~it is Lagrangian $F$-unstable).  However, the variations used there to prove instability are not Hamiltonian: they are the variations where one shrinks the size of one circle generator in the Clifford torus relative to the other.  

In fact, the Clifford torus is locally area-minimising under Hamiltonian variations, see \cite{Oh} as well as Theorem \ref{thm:L.localmin.vol},  and Oh conjectured that the Clifford torus is globally area-minimising in its Hamiltonian isotopy class.  Therefore, one would expect that it would be stable for Lagrangian mean curvature flow under sufficiently small Hamiltonian perturbations.   This expectation is reinforced by the fact that the Clifford torus is Hamiltonian $F$-stable, see \cite{LeeLue,LiZhang} as well as Lemma \ref{lem:L.HamFstable}.

However,  in spite of this, we show the following surprising phenomenon.

\begin{thm}\label{thm:Cliff.unstable}
The Clifford torus is unstable for Lagrangian mean curvature flow under arbitrarily $C^k$-small Hamiltonian perturbations for any $k\geq 0$.
\end{thm}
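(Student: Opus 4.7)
The plan is to use the Colding--Minicozzi monotonicity of the entropy $\lambda$ under mean curvature flow. If I can construct Hamiltonian perturbations $\Sigma_\varepsilon$ of the Clifford torus $T$ which are arbitrarily $C^k$-close to $T$ and satisfy $\lambda(\Sigma_\varepsilon) < \lambda(T) = F(T)$, then the rescaled Lagrangian mean curvature flow from $\Sigma_\varepsilon$ cannot converge to any translate or unitary image of $T$, because any such limit self-shrinker would have entropy at most $\lambda(\Sigma_\varepsilon)$. For perturbations sufficiently $C^k$-close to $T$, the entropy is attained near $(x_0,t_0)=(0,1)$, so it suffices to arrange $F(\Sigma_\varepsilon)<F(T)$, i.e.\ to show that $T$ is not a local entropy minimiser under Hamiltonian variations, as announced in the abstract.

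\textbf{Kernel of the second variation.} Because $T$ is Hamiltonian $F$-stable by Lemma~\ref{lem:L.HamFstable}, the decrease cannot come from the first- or second-order jet of $F$ along Hamiltonian directions, so I must pass to higher order. Let $\mathcal{K}$ denote the kernel of the Hamiltonian $F$-stability form, modulo directions generated by rigid motions (translations and unitary rotations), which leave $F$ invariant and hence contribute only flat directions. On the Clifford torus $\mathcal{K}$ is non-trivial, consisting of the non-rigid Hamiltonian $F$-Jacobi fields that the rigidity result identifies. A Lyapunov--Schmidt reduction then produces a finite-dimensional reduced functional $\tilde F$ on $\mathcal{K}$ such that $\tilde F-F(T)$ vanishes to second order at $0$ and whose critical points parametrise self-shrinkers near $T$.

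\textbf{Higher-order descent.} The core step is to expand $\tilde F$ along a well-chosen $\varphi\in\mathcal{K}$, compute the first non-vanishing order (necessarily cubic or quartic), and exhibit strict negativity. The obstruction-to-integration perspective used for the rigidity theorem guarantees that $\tilde F$ is non-constant, and an appropriate choice of sign or of direction in $\mathcal{K}$ should force $\tilde F(s\varphi)<F(T)$ for arbitrarily small $s\neq 0$. Lifting $s\varphi$ to an honest Hamiltonian deformation of $T$ via the Lyapunov--Schmidt correction then delivers smooth Hamiltonian perturbations $\Sigma_s$, arbitrarily small in any $C^k$-topology, with $F(\Sigma_s)<F(T)$; combined with the first step this proves Theorem~\ref{thm:Cliff.unstable}.

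\textbf{Main obstacle.} The bulk of the work and the principal difficulty is the explicit higher-order computation of $F$ in a chosen Jacobi direction, while carefully tracking the quadratic corrections required to ensure the perturbation remains Hamiltonian rather than merely normal. The apparent tension with Theorem~\ref{thm:L.localmin.vol}, which says the \emph{area} functional does not decrease in such directions, forces the strict negativity of $\tilde F$ to come purely from the Gaussian weight in $F$; organising these weight-dependent cubic or quartic contributions, together with the Lyapunov--Schmidt corrections and the constraint that the deformation be Hamiltonian, is where the delicate part of the argument sits.
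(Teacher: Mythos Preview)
Your overall strategy---show the entropy strictly decreases along some Hamiltonian family and invoke monotonicity---matches the paper. The execution, however, differs substantially and your proposal contains a concrete misconception about the order at which the decrease appears.

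The paper does not use Lyapunov--Schmidt for the instability result. Instead it writes down an \emph{explicit} one-parameter Hamiltonian family $L_{A_s}$: after a unitary rotation the Clifford torus is the $\mathcal{S}^1$-invariant Lagrangian over the circle $\gamma(\phi)=2e^{i\phi}$ in $\bC$, and one acts on $\gamma$ by the subgroup $A_s=\mathrm{diag}(e^s,e^{-s})$ of $\SL(2,\bR)$. This family is Hamiltonian by construction, so no ``corrections to keep the perturbation Hamiltonian'' are needed, and its variation field at $s=0$ is exactly one of the kernel elements $V_A^{\perp}\in\mathcal{K}$. One then computes $F(X(s),0,1)$ directly from the parametrization and shows the supremum defining the entropy is attained near $(0,1)$ via a separate Taylor expansion in $(x_0,t_0)$.

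Your expectation that the first non-vanishing term is cubic or quartic is wrong, and this is where your plan would stall. Odd orders in $s$ vanish by a $\theta_1+\theta_2\mapsto\theta_1+\theta_2+\pi$ symmetry of the integrand; the $s^4$ term also integrates to zero; the first non-trivial contribution is
\[
F(X(s),0,1)-F(X(0),0,1)=-\tfrac{4\pi}{9e}\,s^6+O(s^8).
\]
You are also conflating two distinct higher-order phenomena: the cubic obstruction appears in the self-shrinker \emph{equation} $\mathcal{S}_0(V)=0$ (this is what drives the rigidity theorem), not in the expansion of the $F$-functional. The rigidity obstruction tells you that a reduced functional for $F$ has no nearby critical points, but it does not hand you the sign or the order of its leading term; that has to be extracted by the explicit sixth-order computation.
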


The precise statement can be found in Theorem \ref{thm:flow.instability}.    Our construction is explicit, and shows the result holds even in the $\UU(1)$-equivariant setting: in this context, the statement is that the circle is unstable for the $\UU(1)$-equivariant Lagrangian mean curvature flow under arbitrarily small Hamiltonian deformations.  By looking at the $\UU(1)$-equivariant flow, it was shown in \cite{Grohetal, NevesMonotone} that the Clifford torus was unstable under large Hamiltonian perturbations, and in \cite{NevesFTS} that the Clifford torus is unstable under arbitrarily $C^0$-small Hamiltonian perturbations (but this argument would never give $C^1$-small perturbations due to the nature of the construction).   Theorem \ref{thm:Cliff.unstable} therefore improves these results in this particular setting.

We expect that for the unstable perturbations the Lagrangian mean curvature flow develops a first finite-time singularity, which is Type II, whose Type I blow-up is a transverse pair of special Lagrangian planes (with the same Lagrangian angle).  

\vspace{-4pt}
\subsection{Hamiltonian stability} We observe by the work in \cite{CasLerMiq} we have the following stability result (cf.~Theorem \ref{thm:Hamiso.CLM}).

\begin{thm}\label{thm:Cliff.stable}
A compact embedded Lagrangian $L_0$ in the 3-sphere in $\bC^2$ is Hamiltonian isotopic to the Clifford torus if and only if Lagrangian mean curvature flow starting at $L_0$, after rescaling, converges to the Clifford torus (up to unitary transformation).
\end{thm}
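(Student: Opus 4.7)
The plan is to derive the result from the stability theorem of Castro, Lerma and Miquel \cite{CasLerMiq}, restated as Theorem~\ref{thm:Hamiso.CLM} below, combined with an analysis of how cohomology classes evolve along the rescaled flow.

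For the forward direction, I would directly apply the Castro-Lerma-Miquel theorem: if $L_0 \subset S^3 \subset \bC^2$ is Hamiltonian isotopic to the Clifford torus, then the rescaled Lagrangian mean curvature flow starting from $L_0$ converges smoothly to the Clifford torus up to a unitary transformation.

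For the reverse direction, assume that the rescaled flow $\tilde L_\tau$ converges to $U \cdot T_{\mathrm{Cliff}}$ for some $U \in \UU(2)$. Since unitary transformations of $\bC^2$ are Hamiltonian symplectomorphisms, we may replace $L_0$ by $U^{-1}L_0$ and assume $\tilde L_\tau \to T_{\mathrm{Cliff}}$. The key observation is that the rescaled velocity $H + \tfrac{1}{2}x^\perp$ has ``flux'' 1-form $-d\theta_L + \tfrac{1}{2}\lambda|_L$ on $L$, where $\lambda = x \lrcorner \omega$ is the Liouville form; this is exact precisely when the cohomological matching condition $[\lambda|_L] = 2[d\theta_L]$ holds. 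The Maslov class $[d\theta_L]$ is locally constant along the flow, while the Liouville class satisfies $\partial_\tau [\lambda|_{\tilde L_\tau}] = 2[\alpha_V] = [\lambda|_{\tilde L_\tau}] - 2[d\theta_{\tilde L_\tau}]$, giving the linear ODE $\partial_\tau \bigl([\lambda|_{\tilde L_\tau}] - 2[d\theta_{\tilde L_\tau}]\bigr) = [\lambda|_{\tilde L_\tau}] - 2[d\theta_{\tilde L_\tau}]$. The limiting value on $T_{\mathrm{Cliff}}$ is $[\lambda|_{T_{\mathrm{Cliff}}}] = 2[d\theta_{T_{\mathrm{Cliff}}}]$, so this difference class must vanish at all times $\tau$. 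Hence the rescaled flow is a Hamiltonian isotopy on $\{\tilde L_\tau\}_{\tau \geq \tau_0}$. For $\tau$ sufficiently large, $\tilde L_\tau$ lies in a Weinstein neighborhood of $T_{\mathrm{Cliff}}$ with matching Liouville class, so the standard Weinstein identification of Lagrangian graphs with closed 1-forms shows $\tilde L_\tau$ is Hamiltonian isotopic to $T_{\mathrm{Cliff}}$; tracing back along the Hamiltonian isotopy, $\tilde L_{\tau_0}$ is too. Finally, $L_0$ differs from $\tilde L_{\tau_0}$ by a global scaling of $\bC^2$ by the parabolic rescaling factor, and conjugation by scalings sends Hamiltonian isotopies to Hamiltonian isotopies (with rescaled Hamiltonian), so $L_0$ is Hamiltonian isotopic to the corresponding Clifford torus in the appropriate 3-sphere.

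The main obstacle is the cohomological matching argument: identifying the correct flux for the rescaled flow, deriving its evolution, and deducing that convergence forces the matching at the initial time. Once this is established, the subsequent propagation of Hamiltonian isotopy through the Weinstein neighborhood, the rescaled flow, and the global scaling is essentially routine.
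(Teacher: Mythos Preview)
Your forward direction coincides with the paper's: both invoke \cite{CasLerMiq} directly (cf.\ Theorem~\ref{thm:Hamiso.CLM}(a)).

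For the reverse direction your route is genuinely different. The paper argues by contrapositive via the dichotomy in Theorem~\ref{thm:Hamiso.CLM}: using the Hopf fibration reduction (Lemma~\ref{lem:vol.split} and Theorem~\ref{thm:curve.short.s2}), a Lagrangian in $\mathcal{S}^3(2)$ not Hamiltonian isotopic to $L$ corresponds to a curve in $\mathcal{S}^2$ not bisecting the area, and \cite{CasLerMiq} then shows the rescaled flow converges to a cylinder rather than the Clifford torus. You instead track the class $[\lambda|_{\tilde L_\tau}]-2[\rd\theta_{\tilde L_\tau}]$, observe it solves $\partial_\tau c=c$ and hence grows like $e^\tau$, and conclude from convergence to the Clifford torus (where this class vanishes) that it must vanish identically; exactness of the flux then makes the rescaled flow a Hamiltonian isotopy, and a Weinstein-neighbourhood argument finishes. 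This is correct and has the virtue of being intrinsic---it does not use the Hopf reduction or the full analysis of \cite{CasLerMiq}, and would apply in situations where no such reduction is available. The paper's route, by contrast, yields the extra information of what the rescaled flow does in the non-isotopic case.

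One small point: your final sentence about $L_0$ differing from $\tilde L_{\tau_0}$ by a global scaling is unnecessary. Since $\tilde L_0=L_0$ (the rescaling factor is $1$ at $\tau=0$), once you know the rescaled flow is a Hamiltonian isotopy and $\tilde L_\tau$ is Hamiltonian isotopic to $L$ for large $\tau$, you are already done.
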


This is really a manifestation of the fact that a simple closed curve $\gamma_0$ in the standard 2-sphere is Hamiltonian isotopic to an equator if and only if curve shortening flow starting at $\gamma_0$ exists for all time and converges to an equator.  

\vspace{-4pt}
\subsection{Rigidity}
Given that the Clifford torus is Hamiltonian $F$-stable, the instability result Theorem \ref{thm:Cliff.unstable} is only possible because there are infinitesimal (Hamiltonian) deformations of the Clifford torus as a self-shrinker which do not come from translations, dilations or rotations.  This means that, a priori, it is not clear whether the Clifford torus is locally isolated in the space of self-shrinkers or not, and standard methods cannot determine the rigidity or otherwise of the Clifford torus.  

However, we use a novel approach to rigidity to show the Clifford torus indeed has this property.

\begin{thm}\label{thm:Cliff.rigid}
The Clifford torus is locally unique as a self-shrinker for mean curvature flow.
\end{thm}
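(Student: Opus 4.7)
The plan is to establish rigidity via a Lyapunov--Schmidt reduction onto the space of non-rigid infinitesimal deformations, and then to rule out nontrivial solutions by analysing the second-order obstruction. Let $T\subset\bC^2$ denote the Clifford torus, and write any $C^{2,\alpha}$-close self-shrinker as a normal graph $T_u:=\exp_T(u)$ for a section $u$ of the normal bundle $NT$. The self-shrinker equation then takes the form $\mathcal{F}(u)=0$ for a nonlinear elliptic operator $\mathcal{F}$ with $\mathcal{F}(0)=0$, whose linearisation $L:=D\mathcal{F}(0)$ is the standard self-shrinker stability operator. Since $L$ is self-adjoint with respect to the Gaussian-weighted $L^2$ inner product, its finite-dimensional kernel $K := \ker L$ equals its cokernel, and by the discussion preceding the theorem we have
$$K = K_{\mathrm{rig}}\oplus K_{\mathrm{obs}},$$
where $K_{\mathrm{rig}}$ is spanned by variations generated by the rigid motions (translations, dilations and rotations) referenced in the introduction, and $K_{\mathrm{obs}}$ is the nontrivial ``extra'' subspace of Hamiltonian deformations. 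After gauge-fixing to neutralise the rigid-motion directions that actually produce nearby self-shrinkers, the problem reduces to a finite-dimensional equation on $K_{\mathrm{obs}}$.

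Concretely, the standard Lyapunov--Schmidt construction yields a smooth map $w\colon U\subset K_{\mathrm{obs}}\to K^\perp$ on a neighbourhood of $0$, with $w(0)=0$, $Dw(0)=0$, and $\pi_{K^\perp}\mathcal{F}(k+w(k))=0$ for all $k\in U$. Nearby self-shrinkers modulo rigid motions correspond bijectively to zeros of the reduced map
$$\Phi\colon U\to K_{\mathrm{obs}},\qquad \Phi(k):=\pi_{K_{\mathrm{obs}}}\mathcal{F}(k+w(k)).$$
Because $L|_{K_{\mathrm{obs}}}=0$ and $w$ vanishes to second order at $0$, one finds
$$\Phi(k) = Q(k) + O(|k|^3),\qquad Q(k):=\tfrac{1}{2}\,\pi_{K_{\mathrm{obs}}}D^2\mathcal{F}(0)(k,k),$$
and rigidity follows from showing that $Q(k)\neq 0$ for every nonzero $k\in K_{\mathrm{obs}}$: compactness of the unit sphere in $K_{\mathrm{obs}}$ gives $|Q(k)|\geq c|k|^2$, whence $|\Phi(k)|\geq c|k|^2-C|k|^3$ forces $\Phi^{-1}(0)=\{0\}$ in a neighbourhood of the origin.

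The principal obstacle is thus the explicit analysis of the quadratic obstruction $Q$ on $K_{\mathrm{obs}}$. First one must pin down a basis for $K_{\mathrm{obs}}$ in terms of Fourier modes on $T\cong S^1\times S^1$, which can be read off from the spectral computations in \cite{LeeLue,LiZhang}: these are precisely the Hamiltonian eigenfunctions saturating the Hamiltonian $F$-stability inequality. One then computes $D^2\mathcal{F}(0)(k,k)$ as the second variation of $H+\tfrac{1}{2}x^\perp$ along the deformation defined by $k$, and pairs the result against basis vectors of $K_{\mathrm{obs}}$ with the Gaussian weight; these pairings reduce to integrals of triple products of trigonometric polynomials on $T^2$, and character orthogonality eliminates all but a small, explicit set of resonant contributions. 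The delicate point is that $K_{\mathrm{obs}}$ has dimension greater than one, so it is not enough to test $Q$ against a single direction: every zero direction in $K_{\mathrm{obs}}$ must be ruled out. To make this tractable I would exploit the $\UU(1)^2$-symmetry of $T$ to decompose $Q$ into blocks indexed by torus characters, each a low-dimensional quadratic form whose non-degeneracy can be checked by hand. Heuristically, the same second-order resonance that drives the Hamiltonian instability of Theorem~\ref{thm:Cliff.unstable} should manifest here as the non-vanishing of $Q$.
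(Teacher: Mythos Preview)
Your Lyapunov--Schmidt framework is the right skeleton, but the plan fails at its central step: the quadratic obstruction $Q$ vanishes identically on $K_{\mathrm{obs}}$. This is not an accident of a single direction but a structural fact, and the paper states it explicitly in the introduction (``obstructions do not appear at the first step (i.e.~$k=2$) but rather at $O(s^3)$''). Concretely, for $k=V_A^{\perp}$ one computes (cf.\ \eqref{eq:S0.VAperp})
\[
\mathcal{S}_0(sV_A^{\perp})=\frac{s^2\bigl(1+5\cos(2\theta_1+2\theta_2)\bigr)}{4}(JX_1+JX_2)+\frac{s^3}{8}V_A^{\perp}+\cdots,
\]
and the $s^2$ term, having angular content in frequencies $0$ and $2(\theta_1+\theta_2)$, is $L^2$-orthogonal to every basis vector of $\mathcal{K}=\Span\{V_A^{\perp},V_B^{\perp},V_C^{\perp},V_D^{\perp}\}$, which live at frequency $\theta_1\pm\theta_2$. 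The same parity/frequency argument kills $\pi_{\mathcal{K}}D^2\mathcal{F}(0)(k,k)$ for every $k\in\mathcal{K}$. So your proposed criterion ``$Q(k)\neq 0$ for all $k\neq 0$'' is never satisfied, and the estimate $|\Phi(k)|\geq c|k|^2-C|k|^3$ is vacuous.

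What the paper actually does is push to third order: one must incorporate the second-order correction $w_2(k,k)=-\tfrac12\mathcal{L}_0^{-1}\pi_{\mathcal{K}^{\perp}}D^2\mathcal{F}(0)(k,k)$ and then show that the cubic map
\[
k\longmapsto \pi_{\mathcal{K}}\Bigl[\tfrac16 D^3\mathcal{F}(0)(k,k,k)+D^2\mathcal{F}(0)\bigl(k,w_2(k,k)\bigr)\Bigr]
\]
has no nontrivial zeros. Propositions \ref{prop:VAperp.obs}--\ref{prop:obs} carry this out explicitly and obtain a definite cubic form in the coefficients $(a,b,c,d)$ of $k=aV_A^{\perp}+bV_B^{\perp}+cV_C^{\perp}+dV_D^{\perp}$, yielding $\|\pi_{\mathcal{K}}\mathcal{S}_0(sU+W)\|\geq\delta s^3$. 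A secondary inaccuracy in your proposal: $K_{\mathrm{obs}}$ is not purely Hamiltonian. After gauge-fixing for $\SO(4)$ (the four directions $Y_1^{\perp},\dots,Y_4^{\perp}$), the residual kernel $\mathcal{K}$ is four-dimensional, with $V_A^{\perp},V_B^{\perp}$ Hamiltonian but $V_C^{\perp},V_D^{\perp}$ not (see \eqref{eq:VCperp.VDperp}); translations and dilations sit at eigenvalues $\tfrac12$ and $0$, not in $\Ker(\Delta_L^{\perp}-1)$, and are handled separately by recentring as in the proof of Theorem \ref{thm:local.uniq}.
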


The precise statement can be found in Theorem \ref{thm:local.uniq}.   The rigidity of the Clifford torus is perhaps expected given that it is a simple and natural example of a self-shrinker, but what is surprising is that the proof of this result does not, and cannot, proceed as one might expect.  Indeed, we hope that the novel method we employ to prove Theorem \ref{thm:Cliff.rigid} will be useful in other contexts.

\vspace{-4pt}
\subsection{Entropy}
The proof of Theorem \ref{thm:Cliff.unstable} relies on explicitly showing that the Clifford torus is not a local minimizer for the entropy \cite{ColdMin} under Hamiltonian variations.  Due to Hamiltonian $F$-stability, we know that this is not an issue that can be analysed at the ``linear level''.  

More precisely, any Hamiltonian variation of order $O(s)$ for which the entropy could go down must  
have an entropy value which agrees with that of the Clifford torus up to and including order $O(s^2)$.  Therefore, one needs to look at ``higher order'' terms.  It transpires that the first order at which the entropy could go down is $O(s^6)$, showing the delicate nature of the problem.  

There is an additional issue that the entropy is defined as a supremum over all space-time points, so it is not practical to compute directly.  We overcome this through an argument which allows us to restrict attention to the $F$-functional, which can be computed. 

Theorem \ref{thm:Cliff.unstable} follows from monotonicity of the entropy under (rescaled) mean curvature flow.

\vspace{-4pt}
\subsection{Obstructions}
Since we have infinitesimal deformations of the Clifford torus as a self-shrinker which do not come from rotations, we have to demonstrate that these infinitesimal deformations do not extend to genuine deformations.  Therefore, again we have to go beyond the ``linear level'' in the analysis, and take a new approach to the study of local uniqueness.

More concretely, if the deformation is of order $O(s)$ we have to explicitly demonstrate that there are obstructions to extending it to a solution of the self-shrinker equation at order $O(s^k)$ for some $k\geq 2$.  Here, we  view the problem of solving a nonlinear equation in terms of its linearisation and an iterative fixed point/contraction mapping argument, as one uses in the Implicit Function Theorem. It turns out that obstructions do not appear at the first step (i.e.~$k=2$) but rather at $O(s^3)$.  Again, this demonstrates the somewhat subtle nature of the problem.

\vspace{-4pt}
\subsection{Summary}
We now briefly summarise the contents of the article.  

In $\S$\ref{sec:prelim} we introduce the notation we shall use throughout for studying the Clifford torus and its deformations, as well as recalling basic facts from Lagrangian geometry and the definition of the entropy and $F$-functional.   We describe some of the eigenspaces for low eigenvalues of the Laplacian on functions and on the normal bundle of the Clifford torus, since this plays a role in the second variation of $F$, which we derive.

In $\S$\ref{sec:orbits}, we study the orbit of the Clifford torus under various relevant group actions and show that generators of these actions correspond to elements of the eigenspaces we described in $\S$\ref{sec:prelim}.

In $\S$\ref{sec:instability}, after identifying the variations giving Hamiltonian $F$-stability, for one of these sufficiently small variations we compute the $F$-functional and the entropy.  By showing it goes down, we prove Theorem \ref{thm:Cliff.unstable}.  We also prove Theorem \ref{thm:Cliff.stable} in this section using work in \cite{CasLerMiq} and some elementary observations, and additionally give an alternative proof of a weaker form of the stability result Theorem \ref{thm:Cliff.stable} which may have applications in other contexts.

Finally, in $\S$\ref{sec:rigidity}, we set-up the deformation problem for self-shrinkers in terms of zeros of a smooth map.  After gauge-fixing for the action of rotations, we obtain a nonlinear elliptic operator acting on normal vector fields whose zeros characterise nearby self-shrinkers.  We identify the (self-adjoint) linearisation of this operator, its kernel, and show that the nonlinear operator determines a non-trivial cubic map from the kernel to itself.  From this, we prove Theorem \ref{thm:Cliff.rigid}.

\begin{ack}
This research was supported by Leverhulme Trust Research Project Grant RPG-2016-174.  The last author would like to thank Jonathan Zhu for encouraging the authors to provide further details in a part of the argument leading to Theorem \ref{thm:Cliff.rigid}.
\end{ack}

\section{Preliminaries}\label{sec:prelim}

We define the Clifford torus in $\mathcal{S}^3(2)\subseteq\bC^2=\bR^4$ by
\begin{equation*}
L=\{\sqrt{2}(e^{i\theta_1},e^{i\theta_2})\in\bC^2\,:\,\theta_1,\theta_2\in\bR\}.
\end{equation*}
We then have that $L$ is Lagrangian and minimal in $\mathcal{S}^3(2)$.  Therefore, if $X$ denotes the position vector on $L$, we have that $X=X^{\perp}$, where ${}^{\perp}$ denotes projection onto the normal bundle $NL$.  Moreover, the mean curvature vector $H$ of $L$ in $\bC^2$ satisfies 
\begin{equation}\label{eq:shrinker}
H=-\frac{X^{\perp}}{2},
\end{equation}
i.e.~$L$ is a self-shrinker so that $L_t=\sqrt{1-t}L$ is a solution to (Lagrangian) mean curvature flow with $L_0=L$.   These facts are easy to check so we do it here, as the computations will be useful later.

Throughout we will use complex coordinates $z_1,z_2$ on $\bC^2$ and corresponding real coordinates $(x_1,y_1,x_2,y_2)$ on $\bR^4$ so that $z_1=x_1+iy_1$ and $z_2=x_2+iy_2$.

\vspace{-4pt}
\subsection{Basics}
We let $\mathcal{S}^1$ be the unit circle in $\bC$.  We also let $J$ and $\omega$ be the standard complex and symplectic structures on $\bC^2$.  We define the embedding $X:\mathcal{S}^1\times\mathcal{S}^1\to L\subseteq\bC^2$ by
\begin{equation*}
X(e^{i\theta_1},e^{i\theta_2})=\sqrt{2}(e^{i\theta_1},e^{i\theta_2}).
\end{equation*}
We therefore have two tangent vector fields on $L$ given by 
\begin{equation*}
X_1=X_*\left(\frac{\partial}{\partial\theta_1}\right)=\sqrt{2}(ie^{i\theta_1},0)\quad\text{and}\quad X_2=X_*\left(\frac{\partial}{\partial\theta_2}\right)=\sqrt{2}(0,ie^{i\theta_2}).
\end{equation*}
It is immediate that
\begin{equation}\label{eq:L.metric.0}
|X_1|^2=|X_2|^2=2\quad\text{and}\quad \langle X_1,X_2\rangle=0,
\end{equation}
thus the induced metric on $L$ is
\begin{equation}\label{eq:L.metric}
2\rd\theta_1^2+2\rd\theta_2^2,
\end{equation}
which is flat.

It is clear by inspection that 
\begin{equation*}
JX_1=-\sqrt{2}(e^{i\theta_1},0)\quad\text{and}\quad JX_2=-\sqrt{2}(0,e^{i\theta_2})
\end{equation*}
are orthogonal unit normal vector fields on $L$, and hence $L$ is Lagrangian.  Moreover,
\begin{equation}\label{eq:L.X}
X=X^{\perp}=-JX_1-JX_2.
\end{equation}
We see that 
\begin{equation*}
E_1=(ie^{i\theta_1},0)=\frac{1}{\sqrt{2}}X_1\quad\text{and}\quad
E_2=(0,ie^{i\theta_2})=\frac{1}{\sqrt{2}}X_2
\end{equation*}
are orthonormal tangent vector fields on $L$, so we can compute the mean curvature vector field
\begin{equation}\label{eq:L.H}
\begin{split}
H&=\nabla_{E_1}E_1+\nabla_{E_2}E_2=\frac{1}{2}\nabla_{X_1}X_1+\frac{1}{2}\nabla_{X_2}X_2\\
&=-\frac{\sqrt{2}}{2}(e^{i\theta_1},0)-\frac{\sqrt{2}}{2}(0,e^{i\theta_2})=\frac{1}{2}(JX_1+JX_2)=-\frac{X^{\perp}}{2},
\end{split}
\end{equation}
as claimed in \eqref{eq:shrinker}.  

Observe that if $\Omega=\rd z_1\wedge\rd z_2$ is the standard holomorphic volume form on $\bC^2$ then, since
\begin{equation*}
X^*\rd z_1=\sqrt{2}ie^{i\theta_1}\rd\theta_1\quad\text{and}\quad X^*\rd z_2=\sqrt{2}ie^{i\theta_2}\rd\theta_2,
\end{equation*}
we have that
\begin{equation*}
X^*\Omega=-2e^{i(\theta_1+\theta_2)}\rd\theta_1\wedge\rd\theta_2.
\end{equation*}
Since the volume form on $L$ is 
\begin{equation}\label{eq:vol}
\vol_L=2\rd\theta_1\wedge\rd\theta_2,
\end{equation}
we see that the Lagrangian angle $\theta$ of $L$ satisfies
\begin{equation*}
e^{i\theta}=-e^{i(\theta_1+\theta_2)}=e^{i(\theta_1+\theta_2+\pi)}.
\end{equation*} 
We therefore verify that
\begin{equation*}
H=J\nabla\theta=J\nabla(\theta_1+\theta_2+\pi)=\frac{1}{2}(JX_1+JX_2).
\end{equation*}
(Notice that the factor of $\frac{1}{2}$ arises from \eqref{eq:L.metric.0}.)

The Clifford torus is also an example of a (positive) monotone Lagrangian, i.e.~if $\lambda$ is the 1-form dual to $JX^{\perp}$ then 
\begin{equation*}
[\rd\theta]=\frac{c}{2}[\lambda]\in H^1(L)
\end{equation*}
for some constant $c>0$.  We know in fact that $c=1$ for any Lagrangian self-shrinker satisfying \eqref{eq:shrinker}.  The monotone property is preserved under Hamiltonian isotopy, and any monotone Lagrangian can be rescaled so that $c=1$.

\vspace{-4pt}
\subsection{Laplacians}  We shall see that to understand the stability properties of the Clifford torus as a self-shrinker, we will need to analyse the Laplacian on normal vector fields, and particularly the $1$-eigenspace of the Laplacian.

 Notice that \eqref{eq:L.metric} implies that the induced Laplacian on functions on $L$ is just
\begin{equation}\label{eq:L.Delta}
 \Delta_L=\frac{1}{2}\Delta_0=-\frac{1}{2}\left(\frac{\partial^2}{\partial\theta_1^2}+\frac{\partial^2}{\partial\theta_2^2}\right),
\end{equation}
where $\Delta_0$ is just the standard Laplacian on $\mathcal{S}^1\times\mathcal{S}^1$.  In particular, we have the following elementary facts.

\begin{lem}\label{lem:f.1.espace}
For $\Delta_L$ given in \eqref{eq:L.Delta} we have that the set of eigenvalues is
\begin{equation*}
\left\{\textstyle\frac{1}{2}n\,:\,n\in\bN\right\}.
\end{equation*}
Moreover, we have:
\begin{gather*}
\Span\{1\}=\{f\,:\,\Delta_Lf=0\};\\
\Span\{\cos\theta_1,\sin\theta_1,\cos\theta_2,\sin\theta_2\}=\{f\,:\,\Delta_Lf=\textstyle\frac{1}{2} f\};\\
\Span\{\cos(\theta_1+\theta_2),\sin(\theta_1+\theta_2),\cos(\theta_1-\theta_2),\sin(\theta_1-\theta_2)\}=\{f:\Delta_L f=f\}.
\end{gather*}
\end{lem}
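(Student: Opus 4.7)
The proof is essentially the standard Fourier decomposition on the flat 2-torus, rescaled by the factor $\tfrac{1}{2}$ coming from the metric \eqref{eq:L.metric}. My plan is as follows.

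First, I would write $\Delta_L = \tfrac{1}{2}\Delta_0$ where $\Delta_0 = -(\partial_{\theta_1}^2 + \partial_{\theta_2}^2)$ is the Laplacian on $\mathcal{S}^1 \times \mathcal{S}^1$ equipped with the standard (unit-circumference) metric $\rd\theta_1^2 + \rd\theta_2^2$. Separation of variables shows that the complex exponentials $e_{n,m}(\theta_1,\theta_2) = e^{i(n\theta_1 + m\theta_2)}$, for $(n,m) \in \bZ^2$, are eigenfunctions of $\Delta_0$ with eigenvalue $n^2 + m^2$, hence eigenfunctions of $\Delta_L$ with eigenvalue $\tfrac{1}{2}(n^2+m^2)$. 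By the standard Fourier theorem on the torus, $\{e_{n,m}\}_{(n,m)\in\bZ^2}$ is a complete orthonormal basis of $L^2(\mathcal{S}^1\times\mathcal{S}^1)$, and by elliptic regularity any $L^2$ eigenfunction is smooth. Consequently the spectrum of $\Delta_L$ is exactly $\{\tfrac{1}{2}(n^2+m^2) : (n,m) \in \bZ^2\}$, and for each value $\lambda$ in this spectrum the $\lambda$-eigenspace is the (finite-dimensional) span of those $e_{n,m}$ with $n^2+m^2 = 2\lambda$, equivalently of the real trigonometric functions $\cos(n\theta_1+m\theta_2)$ and $\sin(n\theta_1+m\theta_2)$.

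The three explicit claims then follow by enumerating the lattice solutions to $n^2+m^2 = 2\lambda$ for $\lambda \in \{0, \tfrac{1}{2}, 1\}$:
\begin{itemize}
\item $n^2+m^2 = 0$: only $(n,m)=(0,0)$, giving the constants;
\item $n^2+m^2 = 1$: $(n,m) \in \{(\pm 1, 0),(0,\pm 1)\}$, whose real and imaginary parts span $\{\cos\theta_1,\sin\theta_1,\cos\theta_2,\sin\theta_2\}$;
\item $n^2+m^2 = 2$: $(n,m) \in \{(\pm 1, \pm 1)\}$, whose real and imaginary parts span $\{\cos(\theta_1\pm\theta_2),\sin(\theta_1\pm\theta_2)\}$.
\end{itemize}
Linear independence in each case is immediate from orthogonality of the underlying exponentials in $L^2$.

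There is no genuine obstacle: the only thing to check carefully is the bookkeeping of the spectrum and the fact that pairs $(n,m)$ and $(-n,-m)$ give the same real eigenfunctions (so that each listed span indeed has the claimed dimension, $1$, $4$, and $4$ respectively). Completeness of the Fourier basis and smoothness via elliptic regularity together ensure that we have not missed any eigenfunctions at these values.
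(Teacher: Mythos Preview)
Your argument is correct and is essentially the paper's approach made explicit: the paper's proof simply says the result ``is immediate from \eqref{eq:L.Delta}'' together with the observation that $\cos(\theta_1\pm\theta_2)$, $\sin(\theta_1\pm\theta_2)$ are $2$-eigenfunctions and $\cos\theta_i$, $\sin\theta_i$ are $1$-eigenfunctions of $\Delta_0$, i.e.\ it appeals to exactly the Fourier decomposition you spell out.

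One remark worth making: your description of the spectrum as $\{\tfrac{1}{2}(n^2+m^2):(n,m)\in\bZ^2\}$ is correct and in fact sharper than the lemma's stated set $\{\tfrac{1}{2}n:n\in\bN\}$, since not every non-negative integer is a sum of two squares (e.g.\ $\tfrac{3}{2}$ is \emph{not} an eigenvalue of $\Delta_L$). This imprecision in the lemma is harmless for the paper, which only ever uses the eigenspaces for $\lambda\in\{0,\tfrac{1}{2},1\}$, and those you have identified correctly.
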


\begin{proof}
This is immediate from \eqref{eq:L.Delta}, the observation that
\begin{equation*}
\Delta_0\cos(\theta_1\pm\theta_2)=2\cos(\theta_1\pm\theta_2)\quad\text{and}
\quad
\Delta_0\sin(\theta_1\pm\theta_2)=2\sin(\theta_1\pm\theta_2),
\end{equation*}
and the fact that $\cos\theta_i$, $\sin\theta_i$ are $1$-eigenfunctions of $\Delta_0$ for $i=1,2$.
\end{proof}

Moreover, as $L$ is Lagrangian and flat, and the normal bundle and tangent bundle are isometric, we have that the normal bundle of $L$ is flat.  Thus the Laplacian on the normal bundle is given by the rough Laplacian
\begin{equation}\label{eq:L.nDelta.rough}
\Delta_L^{\perp}=-\nabla_{E_1}^{\perp}\nabla_{E_1}^{\perp}-\nabla_{E_2}^{\perp}\nabla_{E_2}^{\perp}.
\end{equation}

It is easy to see that
\begin{equation}\label{eq:L.nabla}
\nabla_{X_1}X_1=JX_1,\quad \nabla_{X_2}X_2=JX_2,\quad \nabla_{X_1}X_2=\nabla_{X_2}X_1=0.
\end{equation} 
Therefore, as the complex structure $J$ is parallel (or just by inspection),
\begin{gather*}
\nabla_{X_1}(JX_1)=J\nabla_{X_1}X_1=-X_1, \qquad \nabla_{X_2}(JX_2)=J\nabla_{X_2}X_2=-X_2,\\
\nabla_{X_1}(JX_2)=J\nabla_{X_1}X_2=0,\qquad \nabla_{X_2}(JX_1)=J\nabla_{X_2}X_1=0.
\end{gather*}
We therefore see that 
\begin{equation}\label{eq:n.diff.zero}
\nabla_{X_i}^{\perp}(JX_j)=0
\end{equation}
for all $i,j$.  Hence, $JX_1$ and $JX_2$ are harmonic normal vector fields:
\begin{equation*}
\Delta_L^{\perp}JX_i=0.
\end{equation*}
Notice this implies that
\begin{equation}\label{eq:H.harmonic}
\Delta_L^{\perp}H=0=\Delta_L^{\perp}X^{\perp}.
\end{equation}

Since $X_1$ and $X_2$ span the normal vector fields on $L$, we can write any normal vector field $V$ on $L$ uniquely as
\begin{equation}\label{eq:L.normal}
V=f_1JX_1+f_2JX_2.
\end{equation}
From \eqref{eq:n.diff.zero} we have that
\begin{equation}\label{eq:L.nDelta}
\Delta_L^{\perp}V=(\Delta_Lf_1)JX_1+(\Delta_Lf_2)JX_2,
\end{equation}
where $\Delta_L$ is given in \eqref{eq:L.Delta}.  This yields the following.

\begin{lem}\label{lem:V.1.espace}
For $\Delta_L^{\perp}$ given in \eqref{eq:L.nDelta.rough} we have that the set of eigenvalues is
\begin{equation*}
\left\{\textstyle\frac{1}{2}n\,:\,n\in\bN\right\}.
\end{equation*}
Moreover, we have:
\begin{gather*}
\Span\{JX_1+JX_2,JX_1-JX_2\}=\{V\,:\,\Delta_L^{\perp}V=0\};\\
\Span\{(\cos\theta_j)JX_k,(\sin\theta_j)JX_k\,:\,j,k=1,2\}=\{V\,:\,\Delta_L^{\perp}V=\textstyle\frac{1}{2}V\};\\
\Span\{\cos(\theta_1\pm\theta_2)(JX_1\pm JX_2), \sin(\theta_1\pm\theta_2)(JX_1\pm JX_2)\}=\{V\,:\,\Delta_L^{\perp}V=V\}.
\end{gather*}
\end{lem}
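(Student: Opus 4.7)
The plan is to reduce the eigenvalue problem for $\Delta_L^\perp$ on the normal bundle to two copies of the eigenvalue problem for $\Delta_L$ on scalar functions, which is already handled by Lemma \ref{lem:f.1.espace}. The key input is equation \eqref{eq:L.nDelta}, which says that once we write $V=f_1JX_1+f_2JX_2$ via \eqref{eq:L.normal}, the rough Laplacian acts diagonally: $\Delta_L^\perp V=(\Delta_L f_1)JX_1+(\Delta_L f_2)JX_2$. Since $\{JX_1,JX_2\}$ is a pointwise-orthogonal frame for $NL$, the identity $\Delta_L^\perp V=\lambda V$ is equivalent to $\Delta_L f_1=\lambda f_1$ and $\Delta_L f_2=\lambda f_2$. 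Hence the eigenvalues of $\Delta_L^\perp$ coincide with those of $\Delta_L$, namely $\{\tfrac{1}{2}n : n\in\bN\}$ by Lemma \ref{lem:f.1.espace}, and the $\lambda$-eigenspace of $\Delta_L^\perp$ is the direct sum $E_\lambda JX_1\oplus E_\lambda JX_2$, where $E_\lambda\subseteq C^\infty(L)$ is the $\lambda$-eigenspace of $\Delta_L$.

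First I would handle the $\lambda=0$ case: Lemma \ref{lem:f.1.espace} gives $E_0=\Span\{1\}$, so both $f_1$ and $f_2$ are constant, and the $0$-eigenspace of $\Delta_L^\perp$ is $\Span\{JX_1,JX_2\}$. A change of basis rewrites this as $\Span\{JX_1+JX_2,JX_1-JX_2\}$, matching the claim. Next, for $\lambda=\tfrac{1}{2}$, Lemma \ref{lem:f.1.espace} gives $E_{1/2}=\Span\{\cos\theta_1,\sin\theta_1,\cos\theta_2,\sin\theta_2\}$; taking this times $JX_1$ and $JX_2$ produces exactly the $8$-dimensional span $\Span\{(\cos\theta_j)JX_k,(\sin\theta_j)JX_k : j,k=1,2\}$ stated in the lemma.

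The $\lambda=1$ case follows the same template: $E_1=\Span\{\cos(\theta_1\pm\theta_2),\sin(\theta_1\pm\theta_2)\}$ has dimension $4$, so the $1$-eigenspace of $\Delta_L^\perp$ is $8$-dimensional, spanned by the products of these four functions with either $JX_1$ or $JX_2$. The only step needing a short check is that the $8$ vectors listed in the statement, namely $\cos(\theta_1\pm\theta_2)(JX_1\pm JX_2)$ and $\sin(\theta_1\pm\theta_2)(JX_1\pm JX_2)$ (all four sign combinations), form an equivalent basis. This is an immediate linear-algebra verification: for each of the four eigenfunctions $\phi\in E_1$, the pair $\phi(JX_1+JX_2),\phi(JX_1-JX_2)$ spans $\phi\cdot\Span\{JX_1,JX_2\}$.

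I do not anticipate any genuine obstacle in this proof; all steps are routine once \eqref{eq:L.nDelta} and Lemma \ref{lem:f.1.espace} are in hand. The only mild care required is in the bookkeeping of the $\lambda=1$ basis, to confirm that the symmetric $JX_1\pm JX_2$ presentation still yields the correct $8$-dimensional space.
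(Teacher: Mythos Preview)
Your proposal is correct and follows exactly the same approach as the paper: the paper's proof is the single sentence ``This is immediate from Lemma \ref{lem:f.1.espace} and \eqref{eq:L.nDelta},'' and you have simply unpacked that sentence in detail. Your extra care with the $\lambda=1$ basis bookkeeping is fine but not strictly necessary.
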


\begin{proof}
This is immediate from Lemma \ref{lem:f.1.espace} and \eqref{eq:L.nDelta}.
\end{proof}

Notice that the fact that there is a 2-dimensional space of harmonic normal vector fields is consistent with the fact that $b^1(L)=2$, and thus the space of harmonic 1-forms is 2-dimensional.

\vspace{-4pt}
\subsection{Entropy} The entropy of an immersion $X:\Sigma^2\to\bC^2$, where $\Sigma^2$ is compact, is defined as in \cite{ColdMin} to be
\begin{equation*}
\lambda(X)=\sup_{(x_0,t_0)\in\bC^2\times\bR^+}\frac{1}{4\pi t_0}\int_{\Sigma}\exp\left(-\frac{|X-x_0|^2}{4t_0}\right) \vol_{\Sigma},
\end{equation*}
where $\vol_{\Sigma}$ is the volume form induced by $X^*g$, and $g$ is the Euclidean metric on $\bC^2$.  It will also be useful to consider the $F$-functional
\begin{equation*}
F(X,x_0,t_0)=\frac{1}{4\pi t_0}\int_{\Sigma}\exp\left(-\frac{|X-x_0|^2}{4t_0}\right) \vol_{\Sigma},
\end{equation*}
so that
\begin{equation*}
\lambda(X)=\sup_{(x_0,t_0)\in\bC^2\times\bR^+}F(X,x_0,t_0).
\end{equation*}

The important properties that the entropy has are summarised as follows (cf.~\cite{ColdMin}).  By rescaled mean curvature flow, we mean the flow where we perform the standard Type I rescaling of mean curvature flow about some space-time point.

\begin{lem}\label{lem:entropy}{$\ $}\\[-3ex]
\begin{itemize}
\item[(a)] The entropy is invariant under translations, dilations and rotations.\\[-2ex]
\item[(b)] The entropy is non-increasing under mean curvature flow and rescaled mean curvature flow.\\[-2ex]
\item[(c)] The critical points of the entropy are the self-shrinkers satisfying
\begin{equation}\label{eq:shrinker2}
H=-\frac{(X-x_0)^{\perp}}{2t_0}
\end{equation}
for some $x_0\in\bC^2$ and $t_0>0$.
\end{itemize}
\end{lem}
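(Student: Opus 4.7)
All three parts of Lemma \ref{lem:entropy} are classical, appearing for general properly immersed hypersurfaces in \cite{ColdMin}; I would adapt those arguments to the compact embedded setting at hand.

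For (a), the plan is to change variables in the defining supremum, exploiting the fact that it is taken over all admissible $(x_0,t_0)$. A translation $X \mapsto X + a$ is absorbed by reindexing $x_0 \mapsto x_0 + a$; a rotation $R$ (orthogonal or unitary) by $x_0 \mapsto Rx_0$, using $|RX - Rx_0| = |X - x_0|$ and the isometry invariance of $\vol_\Sigma$; a dilation $X \mapsto \mu X$ by $x_0 \mapsto \mu x_0$ and $t_0 \mapsto \mu^2 t_0$, the factor $\mu^2$ from the transformed volume form exactly cancelling the change in $1/(4\pi t_0)$. Since the supremum is over the full parameter space, none of these reindexings affect its value.

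For (b), the essential input is Huisken's monotonicity formula: if $\{X_t\}_{t \in [0,T]}$ is a smooth mean curvature flow, then for any $x_0 \in \bC^2$ and $s_0 > T$ the map $t \mapsto F(X_t, x_0, s_0 - t)$ is non-increasing. Granting this, I would fix $t \in [0,T]$ and an arbitrary $(x_0, t_0)$, set $s_0 = t_0 + t$, and deduce $F(X_t, x_0, t_0) \leq F(X_0, x_0, t_0 + t) \leq \lambda(X_0)$; taking the supremum on the left over $(x_0,t_0)$ gives $\lambda(X_t) \leq \lambda(X_0)$. Rescaled mean curvature flow reduces to ordinary mean curvature flow by the time-dependent Type I dilation defining the rescaling, after which part (a) absorbs the rescaling factors and yields the same monotonicity.

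For (c), I would compute the first variation of $F(\cdot, x_0, t_0)$ along a normal variation $V$. Using the standard identities $\partial_s \vol_\Sigma = -\langle H, V\rangle \vol_\Sigma$ and $\partial_s |X - x_0|^2 = 2\langle X - x_0, V\rangle$, a short calculation gives
\begin{equation*}
\frac{d}{ds}\Big|_{s=0} F(X + sV, x_0, t_0) = -\int_\Sigma \left\langle H + \frac{(X-x_0)^\perp}{2t_0}, V\right\rangle \frac{e^{-|X-x_0|^2/(4t_0)}}{4\pi t_0} \vol_\Sigma,
\end{equation*}
so that $F(\cdot, x_0, t_0)$ is critical precisely when \eqref{eq:shrinker2} holds. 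The main obstacle I expect is transferring this to the entropy, which is a supremum rather than a single functional. For this I would argue that for compact $\Sigma$ the supremum is attained at some $(x_0^\star, t_0^\star)$ (the Gaussian decay rules out escape in $x_0$, while direct estimates exclude $t_0 \to 0$ or $t_0 \to \infty$), and that for small normal variations the realising parameters can be chosen to vary continuously; stationarity of $\lambda$ then forces stationarity of $F(\cdot, x_0^\star, t_0^\star)$ at the realising pair, giving \eqref{eq:shrinker2}. The converse direction, that any self-shrinker is entropy-critical, is immediate from the variational formula together with attainment.
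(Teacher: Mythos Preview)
The paper does not prove this lemma; it is recorded as a summary of standard facts with a reference to \cite{ColdMin}. Your sketch follows the arguments in that reference and is correct in outline.
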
 

\noindent Notice that if one has a self-shrinker satisfying \eqref{eq:shrinker2}, then by applying a translation and dilation one can ensure that the new shrinker satisfies \eqref{eq:shrinker}.

For a self-shrinker $M$ satisfying \eqref{eq:shrinker} we have that
\begin{equation*}
\lambda(X)=F(X,0,1).
\end{equation*}
It is therefore straightforward to compute the entropy of the Clifford torus.

\begin{lem}\label{lem:L.entropy}
For the Clifford torus $X:L\to\bC^2$, we have
\begin{equation*}
\lambda(X)=\frac{2\pi}{e}=2.311\ldots
\end{equation*}
\end{lem}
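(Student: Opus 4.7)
The plan is to use the observation recorded immediately before the lemma, namely that for any self-shrinker satisfying \eqref{eq:shrinker} one has $\lambda(X) = F(X, 0, 1)$. This reduces the supremum in the definition of entropy to a single Gaussian integral, and the reduction applies to the Clifford torus because \eqref{eq:L.H} has already verified that $L$ satisfies \eqref{eq:shrinker}. So the first (and essentially only) step is to invoke this reduction.

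Next I would substitute the explicit data for $L$ into $F(X, 0, 1)$. The key observation that makes the calculation trivial is that $|X|^2$ is constant on $L$: from $X = \sqrt{2}(e^{i\theta_1}, e^{i\theta_2})$ one reads off $|X|^2 = 4$ identically, so the Gaussian weight $\exp(-|X|^2/4) = e^{-1}$ pulls out of the integral. What remains is $e^{-1}/(4\pi)$ multiplied by the total area of $L$, and by \eqref{eq:vol} this area is $2 \cdot (2\pi)^2 = 8\pi^2$. Multiplying the constants yields the asserted value $2\pi/e$.

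There is really no substantive obstacle here, since every ingredient has been recorded in the preliminaries and the constancy of $|X|^2$ on $L$ means no genuine integration is required. The only mildly delicate point is conceptual rather than computational: namely, the reason one is allowed to evaluate $F$ at the single point $(x_0, t_0) = (0, 1)$ — rather than having to justify taking a supremum over all of $\bC^2 \times \bR^+$ — is precisely the compatibility of \eqref{eq:shrinker} with the first variation of $F$ recorded in Lemma \ref{lem:entropy}(c).
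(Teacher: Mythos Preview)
Your proposal is correct and matches the paper's proof essentially line for line: both invoke the reduction $\lambda(X)=F(X,0,1)$ for self-shrinkers, then use $|X|^2=4$ and \eqref{eq:vol} to evaluate the resulting integral as $\frac{1}{4\pi}\cdot e^{-1}\cdot 8\pi^2=\frac{2\pi}{e}$.
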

\begin{proof} We compute
\begin{align*}
\lambda(X)&=\frac{1}{4\pi}\int_L\exp\left(-\frac{1}{4}|X|^2\right)\vol_L=\frac{1}{4\pi}\int_0^{2\pi}\int_0^{2\pi}2e^{-1}\rd\theta_1\rd\theta_2=\frac{4\pi^2}{2\pi e}=\frac{2\pi}{e},
\end{align*}
where we used $|X|^2=4$ and \eqref{eq:vol}.
\end{proof}

\vspace{-4pt}
\subsection{Second variation}  As we already stated, the first variation of $F$ at $(X,0,1)$ vanishes precisely at self-shrinkers satisfying \eqref{eq:shrinker}.  Therefore, to understand the stability (or otherwise) of the Clifford torus we need to look at the second variation of $F$ at $(X,0,1)$.  This is computed by several authors, e.g.~\cite{AndrewsLiWei, ArezzoSun, LeeLue}, and we specialise their formula to our situation.

\begin{lem}\label{lem:L.secondvar}
The second variation of $F$ at the Clifford torus $L$ at $(X,0,1)$ in a normal direction $V=f_1JX_1+f_2JX_2$ (so fixing $x_0=0$ and $t_0=1$) is given by
\begin{equation}\label{eq:L.secondvar}
4\pi e\left.\frac{\partial^2F}{\partial s^2}\right|_{s=0}=\langle V,(\Delta_L^{\perp}-1)V\rangle_{L^2}=2\langle f_1,(\Delta_L-1)f_1\rangle_{L^2}+2\langle f_2,(\Delta_L-1)f_2\rangle_{L^2}.
\end{equation}
More generally, the second variation of $F$ at $L$ at $(X,0,1)$ in a normal direction $V$ with 
\begin{equation*}
x_0=0,\quad \left.\frac{\partial x_s}{\partial s}\right|_{s=0}=\xi,\quad t_0=1,\quad \left.\frac{\partial t_s}{\partial s}\right|_{s=0}=\tau,
\end{equation*}
is given by
\begin{equation*}
4\pi e\left.\frac{\partial^2F}{\partial s^2}\right|_{s=0}=\langle V,(\Delta_L^{\perp}-1)V+\xi+\tau X\rangle_{L^2}-\frac{1}{2}\|\xi^{\perp}\|_{L^2}^2  -8\pi^2\tau^2.
\end{equation*}
\end{lem}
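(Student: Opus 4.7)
The plan is to apply the general second variation formula for the $F$-functional at a self-shrinker, as established in \cite{AndrewsLiWei, ArezzoSun, LeeLue}, and specialise it using the concrete geometric data for $L$ assembled in the previous subsections.

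For the restricted variation with $(x_0,t_0)=(0,1)$ held fixed, the general formula expresses $4\pi e\cdot\partial_s^2 F|_{s=0}$ as a quadratic form in $V$ built from $\Delta_L^\perp V$, a drift term of the form $\nabla^\perp_{X^T}V$, a second-fundamental-form term $\tilde{A}(V)$, and a zero-order algebraic constant; the prefactor $4\pi e$ arises because the Gaussian weight $e^{-|X|^2/4}=e^{-1}$ is constant on $L$ and $(4\pi)^{-1}$ is the overall normalisation. On $L$ several simplifications occur at once: by \eqref{eq:L.X} the position vector is purely normal so $X^T=0$ and the drift term vanishes; the second fundamental form is read off from \eqref{eq:L.nabla} (giving $A(E_i,E_i)=\tfrac{1}{2}JX_i$ and $A(E_1,E_2)=0$), so $\tilde{A}(V)$ reduces to an explicit scalar multiple of $V$; and combining this with the zero-order constant delivers the operator form $\Delta_L^\perp-1$, yielding the first equality in \eqref{eq:L.secondvar}. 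The second equality is then purely algebraic: substituting $V=f_1JX_1+f_2JX_2$ and using $|JX_j|^2=2$, $\langle JX_1,JX_2\rangle=0$ from \eqref{eq:L.metric.0} together with $\Delta_L^\perp V=(\Delta_L f_1)JX_1+(\Delta_L f_2)JX_2$ from \eqref{eq:L.nDelta} splits the quadratic form into the stated scalar Laplacian pieces, with the overall factor of $2$ coming from $|JX_j|^2=2$.

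For the full statement with nonzero $\xi$ and $\tau$, I would add the further contributions from expanding $F(X_s,x_s,t_s)$ to second order in $s$. Writing $x_s=\xi s+O(s^2)$ and $t_s=1+\tau s+O(s^2)$ and expanding $(4\pi t_s)^{-1}\exp(-|X_s-x_s|^2/(4t_s))$, three new sources of second-order terms appear. The mixed second derivatives between $V$ and $(\xi,\tau)$ produce the coupling $\langle V,\xi+\tau X\rangle_{L^2}$, with the $\tau X$ piece arising from differentiating the Gaussian exponent in $t_s$ and using $X^\perp=X$. The pure $\xi$-quadratic term $-\tfrac{1}{2}\|\xi^\perp\|^2_{L^2}$ is produced by combining the $|\xi|^2 s^2/4$ correction in $|X-x_s|^2$ with the square of the linear-exponent contribution, followed by a direct integration using the parametrisation of $L$ (and the observation that the tangential part of $\xi$ only reparametrises $L$). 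The pure $\tau$-quadratic term $-8\pi^2\tau^2$ comes from expanding $(4\pi t_s)^{-1}e^{-1/t_s}$ to order $s^2$ and integrating against $\vol_L$, using $\mathrm{Area}(L)=8\pi^2$ from \eqref{eq:vol}.

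The main obstacle is careful bookkeeping: the second-order expansion has many pieces whose signs and numerical coefficients must all align with the normalisations in the general formula from the literature. Once that alignment is verified, no further geometric input is required beyond the elementary calculations already carried out in Section \ref{sec:prelim}.
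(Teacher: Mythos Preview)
Your proposal is correct and follows essentially the same route as the paper: cite the general second variation formula from \cite{LeeLue} (their $\mathcal{L}V=\Delta_L^{\perp}V-\langle A_{ij},V\rangle g^{ki}g^{jl}A_{kl}-\tfrac{1}{2}V+\tfrac{1}{2}\nabla^{\perp}_{X^{\top}}V$, together with the $\xi,\tau$ terms), then specialise using $X^{\top}=0$, the explicit second fundamental form from \eqref{eq:L.nabla} giving $\langle A_{ij},V\rangle g^{ki}g^{jl}A_{kl}=\tfrac{1}{2}V$, and $|H|^2=1$, $\Vol(L)=8\pi^2$. The only difference is that your second paragraph proposes to rederive the $\xi,\tau$ contributions by expanding the Gaussian directly, whereas the paper simply reads them off from the cited formula \eqref{eq:secondvar2}; since that formula already contains the $\langle V,\xi\rangle$, $-\tfrac{1}{2}|\xi^{\perp}|^2$, $-2\tau\langle H,V\rangle$ and $-\tau^2|H|^2$ terms, your direct expansion is redundant (though not incorrect).
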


\begin{proof}
It is shown in \cite[Theorem 3]{LeeLue}, for example, that if $g^{ij}$ denotes the components of the inverse of the induced metric and $A_{ij}$ denotes the components of the second fundamental form on $L$, and we set
\begin{equation*}
\mathcal{L}V=\Delta^{\perp}_LV-\langle A_{ij},V\rangle g^{ki}g^{jl}A_{kl}-\frac{V}{2}+\frac{1}{2}\nabla^{\perp}_{X^{\top}}V.
\end{equation*}
then 
\begin{equation}\label{eq:secondvar2}
4\pi\left.\frac{\partial^2F}{\partial s^2}\right|_{s=0}=\int_L \left(\langle V,\mathcal{L}V\rangle+\langle V,\xi\rangle-\frac{1}{2}|\xi^{\perp}|^2-2\tau\langle H,V\rangle-\tau^2|H|^2\right) e^{-\frac{|X|^2}{4}}\vol_L.
\end{equation}

For the Clifford torus, it is easy to see from \eqref{eq:L.nabla} that the second fundamental form $A$ of $L$ with respect to the basis $X_1,X_2$ is given by:
\begin{equation*}
A_{11}=JX_1,\quad A_{12}=A_{21}=0,\quad A_{22}=JX_2.
\end{equation*}
Therefore, using \eqref{eq:L.metric} we see that
\begin{equation*}
\langle A_{ij},V\rangle g^{ki}g^{jl}A_{kl}= \frac{1}{4}\langle JX_1,V\rangle JX_1+\frac{1}{4}\langle JX_2,V\rangle JX_2=\frac{1}{2}g^{ij}\langle JX_i,V\rangle JX_j=\frac{V}{2}.
\end{equation*}
The fact that $X=X^{\perp}$ implies that $X^{\top}=0$.  Moreover $2H=-X$, $|X|^2=4$ and the volume of $L$ is $8\pi^2$.  The result follows from \eqref{eq:L.nDelta} and \eqref{eq:secondvar2}.
\end{proof}

Lemma \ref{lem:L.secondvar} implies the linearisation of the self-shrinker condition \eqref{eq:shrinker} on $L$ is, up to an overall sign, given by $\Delta_L^{\perp}-1$.  We shall formalise this statement later, but what we mean is that if we consider a normal graph over $L$ which also satisfies \eqref{eq:shrinker}, then to first order the normal vector defining the graph will lie in the kernel of $\Delta_L^{\perp}-1$, which we have described  in Lemma \ref{lem:V.1.espace}.

\section{Group orbits}\label{sec:orbits}

We look at the orbit of the Clifford torus $L$ under various groups, studying those which preserve the Lagrangian or self-shrinker condition, or otherwise.  This will play a crucial role in our later study.

\vspace{-4pt}
\subsection{Dilations} Since $L$ is a self-shrinker, we know that dilations of $L$ are generated by $H$, or equivalently 
\begin{equation}\label{eq:U1}
U_1=-X^{\perp}=JX_1+JX_2.
\end{equation}
Notice that this is a harmonic normal vector field and thus clearly not Hamiltonian.

We can choose another harmonic normal vector field orthogonal to $X^{\perp}$,
\begin{equation}\label{eq:U2}
U_2=JX_1-JX_2,
\end{equation}
and we observe the following from Lemma \ref{lem:V.1.espace}.

\begin{lem}\label{lem:dilations}
For $\Delta_L^{\perp}$ given in \eqref{eq:L.nDelta.rough} and $U_1$, $U_2$ given in \eqref{eq:U1}--\eqref{eq:U2}, we have
\begin{equation*}
\Span\{U_1,U_2\}=\{V\,:\,\Delta_L^{\perp}V=0\}.
\end{equation*}
\end{lem}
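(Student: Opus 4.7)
The plan is essentially to invoke Lemma \ref{lem:V.1.espace} directly, since that lemma already characterises the kernel of $\Delta_L^\perp$ as $\Span\{JX_1+JX_2,\,JX_1-JX_2\}$, which is precisely $\Span\{U_1,U_2\}$ by the definitions \eqref{eq:U1}--\eqref{eq:U2}.

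In more detail, I would first observe that $U_1$ and $U_2$ are linearly independent normal vector fields on $L$: indeed, in the basis $\{JX_1,JX_2\}$ of the normal bundle they are represented by the vectors $(1,1)$ and $(1,-1)$, whose determinant is $-2\neq 0$. Hence $\Span\{U_1,U_2\}$ is two-dimensional, and moreover equals $\Span\{JX_1+JX_2,\,JX_1-JX_2\}$ on the nose.

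Next I would apply Lemma \ref{lem:V.1.espace}, which identifies
\begin{equation*}
\Span\{JX_1+JX_2,\,JX_1-JX_2\}=\{V:\Delta_L^\perp V=0\}.
\end{equation*}
Combining these two observations gives the claim. Alternatively, one can verify directly from \eqref{eq:n.diff.zero} that $\Delta_L^\perp(JX_i)=0$ for $i=1,2$, so that $\Delta_L^\perp U_j=0$ for $j=1,2$; this shows $\Span\{U_1,U_2\}$ sits inside the kernel, and equality then follows from dimension counting via Lemma \ref{lem:V.1.espace}.

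There is no real obstacle here: the statement is a direct translation of Lemma \ref{lem:V.1.espace} into the notation $U_1,U_2$ that will be convenient for the later analysis of dilations. The only minor point worth flagging is the compatibility of the two equivalent spanning sets $\{JX_1+JX_2,\,JX_1-JX_2\}$ and $\{JX_1,\,JX_2\}$, which is just elementary linear algebra.
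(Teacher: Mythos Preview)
Your proposal is correct and matches the paper's approach: the paper simply states that the lemma is an observation ``from Lemma \ref{lem:V.1.espace}'' without giving any further proof, and you have done exactly this (with some additional, harmless, elaboration).
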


If we define a 1-parameter family $\{L_{\delta_s}:s\in\bR\}$ of Lagrangians by
\begin{equation*}
L_{\delta_s}=\left\{\frac{\sqrt{2}}{\sqrt{\cosh 2s}}(e^{-s+i\theta_1},e^{s+i\theta_2})\,:\,\theta_1,\theta_2\in\bR\right\},
\end{equation*}
we see that $L_{\delta_s}\subseteq\mathcal{S}^3(2)$ for all $s$, $L_{\delta_0}=L$ and we can calculate the variation vector field 
\begin{equation*}
\left.\frac{\partial L_{\delta_s}}{\partial s}\right|_{s=0}=JX_1-JX_2=U_2.
\end{equation*}
We shall see that $L_{\delta_s}$ define Lagrangian variations for which the Clifford torus is unstable under the flow, but we know these lie in  different Hamiltonian isotopy classes to  $L$ for $s\neq 0$ as $U_2$ is not Hamiltonian.  

\vspace{-4pt}
\subsection{Translations} Translations clearly preserve the class of self-shrinkers and preserve the Lagrangian condition.  The translations on $\bC^2$ are generated by the vectors
\begin{equation*}
(1,0),\quad (i,0),\quad (0,1),\quad (0,i). 
\end{equation*}
We can restrict these vector fields to the Clifford torus $L$ and we may compute
\begin{align*}
\langle (1,0),JE_1\rangle&=-\Ree(e^{-i\theta_1})=-\cos\theta_1, &
\langle (1,0),JE_2\rangle&=0,\\
\langle (i,0),JE_1\rangle&=-\Ree(ie^{-i\theta_1})=-\sin\theta_1, &
\langle (i,0),JE_2\rangle&=0,\\
\langle (0,1),JE_1\rangle&=0, &
\langle (0,1),JE_2\rangle&=-\Ree(e^{-i\theta_2})=-\cos\theta_2,\\
\langle (0,i),JE_1\rangle&=0, &
\langle (0,i),JE_2\rangle&=-\Ree(ie^{-i\theta_2})=-\sin\theta_2.
\end{align*}
Therefore,
\begin{align}
(1,0)^{\perp}&=-\cos\theta_1JE_1=-\sqrt{2}J\nabla(\sin\theta_1),
&
(i,0)^{\perp}&=-\sin\theta_1JE_1=\sqrt{2}J\nabla(\cos\theta_1),\label{eq:trans1}\\
(0,1)^{\perp}&=-\cos\theta_2JE_2=-\sqrt{2}J\nabla(\sin\theta_2),
&
(0,i)^{\perp}&=-\sin\theta_2JE_2=\sqrt{2}J\nabla(\cos\theta_2),\label{eq:trans2}
\end{align}
which are manifestly Hamiltonian.  Moreover, we have the following.

\begin{lem}\label{lem:translations}
For $\Delta_L$ given in \eqref{eq:L.Delta}, we have
\begin{equation*}
\Span\{(1,0)^{\perp},(i,0)^{\perp},(0,1)^{\perp},(0,i)^{\perp}\}=
\{J\nabla f\,:\,\Delta_L f=\textstyle\frac{1}{2} f\}.
\end{equation*}
\end{lem}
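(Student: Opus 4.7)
The lemma follows essentially by inspection from what has just been established, so the ``proof plan'' is really a matter of assembling the pieces in the right order.

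First, I would invoke Lemma \ref{lem:f.1.espace}, which identifies the $\frac{1}{2}$-eigenspace of $\Delta_L$ acting on functions as
\begin{equation*}
\{f:\Delta_Lf=\tfrac{1}{2}f\}=\Span\{\cos\theta_1,\sin\theta_1,\cos\theta_2,\sin\theta_2\}.
\end{equation*}
Since $J\nabla$ is a linear map from $C^\infty(L)$ into sections of $NL$, applying it to this basis produces a spanning set for the image $\{J\nabla f:\Delta_Lf=\tfrac12 f\}$, namely
\begin{equation*}
\Span\{J\nabla\cos\theta_1,J\nabla\sin\theta_1,J\nabla\cos\theta_2,J\nabla\sin\theta_2\}.
\end{equation*}

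Next, I would read off the explicit formulas \eqref{eq:trans1}--\eqref{eq:trans2} computed just before the lemma, which say
\begin{equation*}
(1,0)^{\perp}=-\sqrt{2}\,J\nabla(\sin\theta_1),\quad (i,0)^{\perp}=\sqrt{2}\,J\nabla(\cos\theta_1),
\end{equation*}
and analogously for the other two translation generators. Each of the four translation projections is therefore a non-zero scalar multiple of one of the four generators $J\nabla\cos\theta_i$, $J\nabla\sin\theta_i$, so the two spanning sets coincide up to rescaling, yielding equality of the two spans.

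There is no real obstacle here; the only thing worth noting is that one does not need to verify separately that $J\nabla$ is injective on the $\frac12$-eigenspace, because equality of the spans is checked directly from the explicit expressions. If one did want to record the injectivity, it is immediate from the fact that $f\in\Span\{\cos\theta_1,\sin\theta_1,\cos\theta_2,\sin\theta_2\}$ with $\nabla f=0$ forces $f$ to be constant, hence zero.
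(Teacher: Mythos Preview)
Your proposal is correct and follows exactly the same approach as the paper's own proof, which simply states that the result is immediate from Lemma~\ref{lem:f.1.espace} and the explicit formulas \eqref{eq:trans1}--\eqref{eq:trans2}. You have just written out the details of that immediate deduction.
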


\begin{proof}
This is immediate from Lemma \ref{lem:f.1.espace} and \eqref{eq:trans1}--\eqref{eq:trans2}
\end{proof}

\vspace{-4pt}
\subsection{Unitary transformations}
We know that the unitary group $\UU(2)$ on $\bC^2$ is the intersection of the rigid isometry group $\SO(4)$ on $\bR^4=\bC^2$ with the Hamiltonian diffeomorphism group on $\bC^2$.   Therefore, the orbit of $L$ under $\UU(2)$ consists of Lagrangian self-shrinkers satisfying \eqref{eq:shrinker}, and the orbit of $L$ under $\SO(4)$ consists of (not necessarily Lagrangian) self-shrinkers satisfying \eqref{eq:shrinker}.  

The orbit of $L$ under $\UU(2)$ is 2-dimensional and the orbit of $L$ under $\SO(4)$ is 4-dimensional, since the stabilizer of $L$ in each case is the maximal torus in $\UU(2)$:
\begin{equation*}
\left\{\left(\begin{array}{cc} e^{i\phi_1} & 0 \\ 0 & e^{i\phi_2}\end{array}\right)\,:\,\phi_1,\phi_2\in\bR\right\}.
\end{equation*}
The maximal torus is generated by the matrices
\begin{equation*}
\left(\begin{array}{cc} i & 0 \\ 0 & 0\end{array}\right)\quad\text{and}\quad
\left(\begin{array}{cc} 0 & 0 \\ 0 &  i\end{array}\right),
\end{equation*}
leading to vector fields on $\bC^2$
\begin{equation*}
\left(\begin{array}{c} iz_1\\ 0 \end{array}\right)\quad\text{and}\quad \left(\begin{array}{c} 0 \\ iz_2\end{array}\right),
\end{equation*}
 generating one-parameter subgroups in $\UU(2)\subseteq\SO(4)$.  Notice that these vector fields restricted to $L$ are just $X_1$ and $X_2$, so their projection to the normal space of $L$ is zero.  (Here, and throughout, we will not distinguish row vectors and column vectors.)
 
 We can find a complementary (in fact, orthogonal) subspace of the Lie algebra of $\UU(2)$ to the maximal torus, spanned by the matrices
 \begin{equation*}
 \left(\begin{array}{cc} 0 & -1\\ 1 & 0\end{array}\right)\quad\text{and}\quad 
 \left(\begin{array}{cc} 0 & i \\ i & 0\end{array}\right).
\end{equation*}
The corresponding vector fields on $\bC^2$ are
\begin{equation}\label{eq:u2.vfields}
\left(\begin{array}{c} -z_2\\ z_1\end{array}\right)\quad\text{and}\quad \left(\begin{array}{c} iz_2\\ iz_1\end{array}\right), 
\end{equation}
whose restrictions to $L$ are just
\begin{equation*}
Y_1=\sqrt{2}(-e^{i\theta_2},e^{i\theta_1})\quad\text{and}\quad Y_2=\sqrt{2}(ie^{i\theta_2},ie^{i\theta_1}).
\end{equation*}
We quickly see that
\begin{align*}
\langle Y_1,JE_1\rangle&=\Ree(\sqrt{2}e^{i(\theta_2-\theta_1)})=\sqrt{2}\cos(\theta_1-\theta_2),\\
\langle Y_1, JE_2\rangle &=\Ree(-\sqrt{2}e^{i(\theta_1-\theta_2)})=-\sqrt{2}\cos(\theta_1-\theta_2),\\
\langle Y_2,JE_1\rangle &=\Ree(-\sqrt{2}ie^{i(\theta_2-\theta_1)})=-\sqrt{2}\sin(\theta_1-\theta_2),\\
\langle Y_2,JE_2\rangle &=\Ree(-\sqrt{2}ie^{i(\theta_1-\theta_2)})=\sqrt{2}\sin(\theta_1-\theta_2).
\end{align*}
Hence,
\begin{align}\label{eq:Y1perp}
Y_1^{\perp}&=\cos(\theta_1-\theta_2)(JX_1-JX_2)=2J\nabla(\sin(\theta_1-\theta_2)),\\
 Y_2^{\perp}&=-\sin(\theta_1-\theta_2)(JX_1-JX_2)=2J\nabla(\cos(\theta_1-\theta_2)),\label{eq:Y2perp}
\end{align}
which are manifestly Hamiltonian.

 If we consider the action of the matrix 
\begin{equation}\label{eq:equiv.rotate}
\frac{1}{\sqrt{2}}\left(\begin{array}{cc} 1 & 1 \\ -i & i\end{array}\right)\in \UU(2)
\end{equation}
on $L$ we see that we obtain the Lagrangian self-shrinker
\begin{align}
L'&=\{(e^{i\theta_1}+e^{i\theta_2},-ie^{i\theta_1}+ie^{i\theta_2})\,:\,\theta_1,\theta_2\in\bR\}\nonumber\\
&=\big\{\big( (\cos\theta_1+\cos\theta_2)+i(\sin\theta_1+\sin\theta_2),(\sin\theta_1-\sin\theta_2)-i(\cos\theta_1-\cos\theta_2)\big)\,:\,\theta_1,\theta_2\in\bR\big\}\nonumber\\
&=\Big\{\Big(2\cos\left(\frac{\theta_1+\theta_2}{2}\right)\cos\left(\frac{\theta_1-\theta_2}{2}\right)+2i\sin\left(\frac{\theta_1+\theta_2}{2}\right)\cos\left(\frac{\theta_1-\theta_2}{2}\right),\nonumber\\
&\quad \quad \;\,\, 2\cos\left(\frac{\theta_1+\theta_2}{2}\right)\sin\left(\frac{\theta_1-\theta_2}{2}\right)+2i\sin\left(\frac{\theta_1+\theta_2}{2}\right)\sin\left(\frac{\theta_1-\theta_2}{2}\right)\,:\,\theta_1,\theta_2\in\bR\Big\}\nonumber\\
&=\{2(e^{i\phi}\cos\rho,e^{i\phi}\sin\rho)\,:\,\phi,\rho\in\bR\},\label{eq:L'}
\end{align}
where we set
\begin{equation}\label{eq:angles}
\phi=\frac{\theta_1+\theta_2}{2}\quad\text{and}\quad\rho=\frac{\theta_1-\theta_2}{2}.
\end{equation}
In this way, we can view the Clifford torus as an $\mathcal{S}^1$-invariant Lagrangian of the form
\begin{equation*}
\{\gamma(\phi)(\cos\rho,\sin\rho)\,:\,\phi,\rho\in\bR\}
\end{equation*}
for a curve $\gamma$ in $\bC$.  In the case of the Clifford torus, the curve in $\bC$ in question is just
\begin{equation*}
\gamma(\phi)=2e^{i\phi},
\end{equation*}
a circle of radius $2$.
 
\vspace{-4pt}
\subsection{Hamiltonian group orbits}

From the perspective in \eqref{eq:L'} it is clear that we can act by the linear Hamiltonian group on $\bC=\bR^2$, i.e.~$\SL(2,\bR)$, on the curve $\gamma(\phi)=2e^{i\phi}$ to obtain Lagrangians Hamiltonian isotopic to $L'$, and thus $L$.  
The stabilizer of $\gamma$ in $\SL(2,\bR)$ is $\SO(2)$, so the orbit of $\gamma$ under $\SL(2,\bR)$ is 2-dimensional.  Moreover, if $\tilde{\gamma}$ is in the $\SL(2,\bR)$ orbit of $\gamma$, then the corresponding Lagrangian
\begin{equation}\label{eq:equiv2}
\tilde{L}'=\{\tilde{\gamma}(\phi)(\cos\rho,\sin\rho)\,:\,\phi,\rho\in\bR\}
\end{equation}
lies in the $\SO(4,\bR)$ orbit of $L'$, and thus $L$, if and only if $\tilde{\gamma}=\gamma$ up to reparametrisation, which is if and only if $\tilde{L}'=L'$.  

We can choose two one-parameter subgroups of $\SL(2,\bR)$ which, together with  $\SO(2)$, enable us to generate $\SL(2,\bR)$: for example, we can take
\begin{equation*}
A_s=\left(\begin{array}{cc} e^s &  0\\ 0 & e^{-s}\end{array}\right)\quad\text{and}\quad B_s=\left(\begin{array}{cc} \cosh s & \sinh s\\ \sinh s & \cosh s\end{array}\right), 
\end{equation*}
for $s\in\bR$, so that $\{A_s\,:\,s\in\bR\}$ and $\{B_s\,:\,s\in\bR\}$ are our one-parameter subgroups.  We see that, identifying $\bR^2=\bC$, we have
\begin{equation*}
A_s\gamma(\phi)=2e^s\cos\phi+2ie^{-s}\sin\phi
\end{equation*}
and
\begin{align*}
B_s\gamma(\phi)&=2(\cosh s\cos\phi+\sinh s\sin\phi)+2i(\sinh s\cos\phi+\cosh s\sin\phi)\nonumber\\
&=2(\cosh s e^{i\phi}+i\sinh s e^{-i\phi}).
\end{align*}

Taking $\tilde{\gamma}=A_s\gamma$ or $\tilde{\gamma}=B_s\gamma$ in \eqref{eq:equiv2} leads to the following Lagrangians Hamiltonian isotopic to $L'$ (and $L$) which only lie in the $\SO(4)$-orbit of $L$ for $s=0$:
\begin{align*}
L'_{A_s}&=\{2(e^s\cos\phi+ie^{-s}\sin\phi)(\cos\rho,\sin\rho)\,:\,\phi,\rho\in\bR\},\\
L'_{B_s}&=\{2(\cosh s e^{i\phi}+i\sinh se^{-i\phi})(\cos\rho,\sin\rho)\,:\,\phi,\rho\in\bR\}.
\end{align*}
(The statement about not lying in the $\SO(4)$-orbit for $s\neq 0$, as well as being clear by inspection, also follows from \eqref{eq:AB.matrices} below.)
Acting by the inverse of the unitary matrix in \eqref{eq:equiv.rotate}, namely
\begin{equation*}
\frac{1}{\sqrt{2}}\left(\begin{array}{cc} 1 & i \\ 1 & -i \end{array}\right)\in\UU(2),
\end{equation*}
 on $L'_{A_s}$ and $L'_{B_s}$, we obtain Lagrangians $L_{A_s}$ and $L_{B_s}$ Hamiltonian isotopic to $L$, which obviously still only lie in the $\SO(4)$ orbit of $L$ for $s=0$.  Explicitly, we see that
\begin{align*}
L_{A_s}&=\{\sqrt{2}(e^s\cos\phi+ie^{-s}\sin\phi)(e^{i\rho},e^{-i\rho})\,:\,\phi,\rho\in\bR\},\\
L_{B_s}&=\{\sqrt{2}(\cosh s e^{i\phi}+i\sinh se^{-i\phi})(e^{i\rho},e^{-i\rho})\,:\,\phi,\rho\in\bR\}.
\end{align*}
Substituting back for $\phi,\rho$ in terms of $\theta_1,\theta_2$ via \eqref{eq:angles} we compute:
\begin{align*}
2(e^s\cos\phi+ie^{-s}\sin\phi)e^{i\rho}&=
e^s(\cos\theta_1+\cos\theta_2)+e^{-s}(\cos\theta_1-\cos\theta_2)\nonumber\\
&\quad +ie^s(\sin\theta_1-\sin\theta_2)+ie^{-s}(\sin\theta_1+\sin\theta_2)\nonumber\\
&=2\cosh se^{i\theta_1}+2\sinh se^{-i\theta_2};\\
2(e^s\cos\phi+ie^{-s}\sin\phi)e^{-i\rho}
&=e^s(\cos\theta_1+\cos\theta_2)+e^{-s}(\cos\theta_2-\cos\theta_1)\nonumber\\
&\quad+ ie^s(\sin\theta_2-\sin\theta_1)+ie^{-s}(\sin\theta_1+\sin\theta_2)\nonumber\\
&=2\sinh se^{-i\theta_1}+2\cosh se^{i\theta_2};\\
2(\cosh se^{i\phi}+i\sinh se^{-i\phi})e^{i\rho}&=
2\cosh se^{i\theta_1}+2i\sinh s e^{-i\theta_2};\\
2(\cosh se^{i\phi}+i\sinh se^{-i\phi})e^{-i\rho}&=2i\sinh se^{-i\theta_1}+2\cosh s e^{i\theta_2}.
\end{align*}
We can thus rewrite 
\begin{align}
L_{A_s}&=\{\sqrt{2}(\cosh se^{i\theta_1}+\sinh se^{-i\theta_2},\sinh se^{-i\theta_1}+\cosh s e^{i\theta_2})\,:\,\theta_1,\theta_2\in\bR\},\label{eq:LAs}\\
L_{B_s}&=\{\sqrt{2}(\cosh se^{i\theta_1}+i\sinh se^{-i\theta_2},i\sinh se^{-i\theta_1}+\cosh s e^{i\theta_2})\,:\,\theta_1,\theta_2\in\bR\}.\label{eq:LBs}
\end{align}

Notice that the variation vector fields for $L_{A_s}$ and $L_{B_s}$ at $s=0$ are given by
\begin{equation*}
V_A=\sqrt{2}(e^{-i\theta_2},e^{-i\theta_1})\quad\text{and}\quad V_B=\sqrt{2}(ie^{-i\theta_2},ie^{-i\theta_1}).
\end{equation*}
We may compute that
\begin{align*}
\langle V_A,JE_1\rangle&=\Ree(-\sqrt{2}e^{-i(\theta_2+\theta_1)})=-\sqrt{2}\cos(\theta_1+\theta_2),\\
\langle V_A,JE_2\rangle &=\Ree(-\sqrt{2}e^{-i(\theta_1+\theta_2)})=-\sqrt{2}\cos(\theta_1+\theta_2),\\
\langle V_B,JE_1\rangle &=\Ree(-\sqrt{2}ie^{-i(\theta_2+\theta_1)})=-\sqrt{2}\sin(\theta_1+\theta_2),\\
\langle V_B,JE_2\rangle &=\Ree(-\sqrt{2}ie^{-i(\theta_1+\theta_2)})=-\sqrt{2}\sin(\theta_1+\theta_2).
\end{align*}
Hence,
\begin{align}
V_A^{\perp}&=-\cos(\theta_1+\theta_2)(JX_1+JX_2)=-2J\nabla(\sin(\theta_1+\theta_2)),\label{eq:VAperp}\\
V_B^{\perp}&=-\sin(\theta_1+\theta_2)(JX_1+JX_2)=2J\nabla(\cos(\theta_1+\theta_2)),\label{eq:VBperp}
\end{align}
which are both clearly Hamiltonian, as we knew.

It is worth noting the following, which follows immediately from Lemma \ref{lem:f.1.espace}.

\begin{lem}\label{lem:f.1.espace2}
For  the normal vector fields given in \eqref{eq:Y1perp}, \eqref{eq:Y2perp}, \eqref{eq:VAperp}, \eqref{eq:VBperp}, we have
\begin{equation*}
\Span\{Y_1^{\perp},Y_2^{\perp},V_A^{\perp},V_B^{\perp}\}=\{J\nabla f\,:\,\Delta_L f=f\}.
\end{equation*}
\end{lem}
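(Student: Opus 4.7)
The statement is essentially a direct corollary of Lemma \ref{lem:f.1.espace}, and the plan is simply to assemble the pieces already computed earlier in the section.

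First, I would recall from Lemma \ref{lem:f.1.espace} that the $1$-eigenspace of $\Delta_L$ is exactly
\[
\Span\{\cos(\theta_1+\theta_2),\sin(\theta_1+\theta_2),\cos(\theta_1-\theta_2),\sin(\theta_1-\theta_2)\}.
\]
Since $J\nabla$ is $\bR$-linear, the space $\{J\nabla f : \Delta_L f = f\}$ is exactly the span of the images of these four basis functions under $J\nabla$.

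Next, I would read off from the explicit formulas \eqref{eq:Y1perp}, \eqref{eq:Y2perp}, \eqref{eq:VAperp}, \eqref{eq:VBperp} the identifications
\[
Y_1^{\perp}=2J\nabla(\sin(\theta_1-\theta_2)),\quad Y_2^{\perp}=2J\nabla(\cos(\theta_1-\theta_2)),
\]
\[
V_A^{\perp}=-2J\nabla(\sin(\theta_1+\theta_2)),\quad V_B^{\perp}=2J\nabla(\cos(\theta_1+\theta_2)).
\]
Thus each of $Y_1^{\perp},Y_2^{\perp},V_A^{\perp},V_B^{\perp}$ is (up to a nonzero scalar) the image under $J\nabla$ of one of the four basis $1$-eigenfunctions, and all four basis eigenfunctions are realised in this way.

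The only mild point to check is that the map $f\mapsto J\nabla f$ is injective on this eigenspace, so that the four resulting normal vector fields are linearly independent and span a $4$-dimensional space. This is clear because $J$ is an isomorphism and none of the four basis eigenfunctions is constant, so their gradients are linearly independent (alternatively, one can simply observe that the four right-hand sides above are visibly $\bR$-linearly independent as vector fields on $L$, because their scalar functions $\sin(\theta_1\pm\theta_2),\cos(\theta_1\pm\theta_2)$ are). I do not foresee any obstacle: the content of the lemma is purely to record the correspondence between the orbit-generating variations and the $1$-eigenspace of $\Delta_L$, and the verification is a one-line appeal to Lemma \ref{lem:f.1.espace}.
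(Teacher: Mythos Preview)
Your proposal is correct and follows exactly the same approach as the paper, which simply states that the result follows immediately from Lemma~\ref{lem:f.1.espace}. You have merely spelled out in more detail the (straightforward) identification between the four $1$-eigenfunctions and the four explicit Hamiltonian vector fields, together with the injectivity check.
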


Observe that the matrices defining the vector fields on $\bR^4=\bC^2$, which generate the one-parameter subgroups of transformations defining the families $L_{A_s}$ and $L_{B_s}$, are given by
 \begin{equation}\label{eq:AB.matrices}
 \left(\begin{array}{cccc} 0 & 0 & 1 & 0 \\ 0 & 0 & 0 & -1\\ 1 & 0 & 0 & 0\\ 0 & -1 &0 &0  \end{array} \right) 
\quad\text{and}\quad 
\left( \begin{array}{cccc} 0 & 0 & 0 & 1 \\ 0 & 0 & 1 & 0\\ 0 & 1 & 0 & 0\\ 1 & 0 &0 &0  \end{array}\right).
\end{equation}
These matrices lie in $\mathfrak{sp}(4,\bR)$, the Lie algebra of the symplectic group on $\bR^4$, but clearly do not lie in $\mathfrak{so}(4)$ (and thus do not lie in $\mathfrak{u}(2)$).

\vspace{-4pt}
\subsection{Rotations} 
We have so far focused on the Clifford torus $L$ as a Lagrangian self-shrinker, but we now want to understand its character just as a self-shrinker.  
For this, we first need to identify the rotations in $\SO(4)$ which do not arise from $\UU(2)$.  At the Lie algebra level (i.e.~in $\mathfrak{so}(4)$), we can span this 2-dimensional space with the following matrices:
\begin{equation*}
 \left(\begin{array}{cccc} 0 & 0 & -1 & 0 \\ 0 & 0 & 0 & 1\\ 1 & 0 & 0 & 0\\ 0 & -1 &0 &0  \end{array} \right) 
\quad\text{and}\quad 
\left( \begin{array}{cccc} 0 & 0 & 0 & -1 \\ 0 & 0 & -1 & 0\\ 0 & 1 & 0 & 0\\ 1 & 0 &0 &0  \end{array}\right).
\end{equation*}
This yields corresponding vector fields on $\bR^4=\bC^2$,
\begin{equation*}
\left(\begin{array}{c}
-x_2 \\
y_2 \\
x_1 \\
-y_1 
\end{array}
\right)
=\left(\begin{array}{c} -\overline{z_2}\\ \overline{z_1}\end{array} \right) 
\quad\text{and}\quad 
\left(\begin{array}{c}
-y_2 \\
-x_2 \\
y_1 \\
x_1 
\end{array}\right)=\left(\begin{array}{c} -i\overline{z_2}\\ i\overline{z_1}\end{array} \right) ,
\end{equation*}
generating one-parameter subgroups in $\SO(4)$.  Their restrictions to $L$ are
\begin{equation*}
 Y_3=\sqrt{2}(-e^{-i\theta_2},e^{-i\theta_1})\quad\text{and}\quad Y_4=\sqrt{2}(-ie^{-i\theta_2},ie^{-i\theta_1}).
\end{equation*}
As before, we may compute
\begin{align*}
 \langle Y_3,JE_1\rangle &=\Ree(\sqrt{2}e^{-i(\theta_2+\theta_1)})=\sqrt{2}\cos(\theta_1+\theta_2),\\
 \langle Y_3,JE_2\rangle &=\Ree(-\sqrt{2}e^{-i(\theta_1+\theta_2)})=-\sqrt{2}\cos(\theta_1+\theta_2),\\
 \langle Y_4,JE_1\rangle &=\Ree(\sqrt{2}ie^{-i(\theta_2+\theta_1)})=\sqrt{2}\sin(\theta_1+\theta_2),\\
 \langle Y_4,JE_2\rangle &=\Ree(-\sqrt{2}ie^{-i(\theta_1+\theta_2)})=-\sqrt{2}\sin(\theta_1+\theta_2).
\end{align*}
Thus, we have
\begin{equation}\label{eq:Y3perp.Y4perp}
 Y_3^{\perp}=\cos(\theta_1+\theta_2)(JX_1-JX_2)\quad\text{and}\quad Y_4^{\perp}=\sin(\theta_1+\theta_2)(JX_1-JX_2).
\end{equation}
Notice here that these vector fields are \emph{not} Hamiltonian, again as we would expect.

\vspace{-4pt}
\subsection{Further group orbits}
Finally, we consider the following $2\times 2$ complex (in fact, Hermitian) matrices at the Lie algebra level (i.e.~they lie in the Lie algebra of $\SL(2,\bC)$):
\begin{equation*}
\left(\begin{array}{cc} 0 & 1 \\ 1 &  0 \end{array}\right)
\quad\text{and}\quad
\left(\begin{array}{cc} 0 & -i\\ i & 0\end{array}\right),
\end{equation*}
which are identified with the following real $4\times 4$ matrices
\begin{equation*}
 \left(\begin{array}{cccc} 0 & 0 & 1 & 0 \\ 0 & 0 & 0 & 1\\ 1 & 0 & 0 & 0\\ 0 & 1 &0 &0  \end{array} \right) 
\quad\text{and}\quad 
\left( \begin{array}{cccc} 0 & 0 & 0 & 1 \\ 0 & 0 & -1 & 0\\ 0 & -1 & 0 & 0\\ 1 & 0 &0 &0  \end{array}\right),
\end{equation*}
that do not lie in $\mathfrak{so}(4)$ or in $\mathfrak{sp}(4,\bR)$.
These matrices yield corresponding vector fields on $\bC^2$:
\begin{equation*}
\left(\begin{array}{c} z_2 \\ z_1\end{array}\right) \quad\text{and}
\quad \left(\begin{array}{c} -iz_2\\ iz_1\end{array}\right),
\end{equation*}
which one should compare to \eqref{eq:u2.vfields}.  These vector fields generate one-parameter groups given by $\{C_s\,:\,s\in\bR\}$ and $\{D_s\,:\,s\in\bR\}$, where
\begin{equation*}
C_s=\left(\begin{array}{cc} \cosh s & \sinh s\\ 
\sinh s & \cosh s \end{array}\right)\quad\text{and}\quad
D_s=\left(\begin{array}{cc} \cosh s & -i\sinh s\\ 
i\sinh s & \cosh s \end{array}\right).
\end{equation*}
The orbits of $L$ under the action of these one-parameter groups yield the following real surfaces in $\bC^2$, which lie in the $\SO(4)$-orbit of $L$ only for $s=0$:
\begin{align*}
L_{C_s}&=\{\sqrt{2}(\cosh s e^{i\theta_1}+\sinh s e^{i\theta_2},
\sinh s e^{i\theta_1}+\cosh s e^{i\theta_2})\,:\,\theta_1,\theta_2\in\bR\},\\
L_{D_s}&=\{\sqrt{2}(\cosh s e^{i\theta_1}-i\sinh s e^{i\theta_2},
i\sinh s e^{i\theta_1}+\cosh s e^{i\theta_2})\,:\,\theta_1,\theta_2\in\bR\}.
\end{align*}
(These formulae should be compared to $L_{A_s}$ and $L_{B_s}$ in \eqref{eq:LAs}--\eqref{eq:LBs}.)  

The variation vector fields of $L_{C_s}$ and $L_{D_s}$ at $s=0$ are given by
\begin{equation*}
V_C=\sqrt{2}(e^{i\theta_2},e^{i\theta_1})\quad\text{and}\quad 
V_D=\sqrt{2}(-ie^{i\theta_2},ie^{i\theta_1}).
\end{equation*}
We may compute that
\begin{align*}
\langle V_C,JE_1\rangle&=\Ree(-\sqrt{2}e^{i(\theta_2-\theta_1)})=-\sqrt{2}\cos(\theta_1-\theta_2),\\
\langle V_C,JE_2\rangle &=\Ree(-\sqrt{2}e^{i(\theta_1-\theta_2)})=-\sqrt{2}\cos(\theta_1-\theta_2),\\
\langle V_D,JE_1\rangle &=\Ree(\sqrt{2}ie^{i(\theta_2-\theta_1)})=\sqrt{2}\sin(\theta_1-\theta_2),\\
\langle V_D,JE_2\rangle &=\Ree(-\sqrt{2}ie^{i(\theta_1-\theta_2)})=\sqrt{2}\sin(\theta_1-\theta_2).
\end{align*}
Hence,
\begin{align}\label{eq:VCperp.VDperp}
V_C^{\perp}=-\cos(\theta_1-\theta_2)(JX_1+JX_2)\quad\text{and}\quad
V_D^{\perp}=\sin(\theta_1-\theta_2)(JX_1+JX_2),
\end{align}
which should be compared to \eqref{eq:VAperp}--\eqref{eq:VBperp}, and which are manifestly \emph{not} Hamiltonian, as we would expect.

We now observe the following by Lemma \ref{lem:V.1.espace}.

\begin{lem}\label{lem:V.1.espace.2} For the normal vector fields on $L$ given in \eqref{eq:Y1perp}, \eqref{eq:Y2perp}, \eqref{eq:VAperp}, \eqref{eq:VBperp}, \eqref{eq:Y3perp.Y4perp}, \eqref{eq:VCperp.VDperp}, we have 
\begin{equation*}
\Span\{Y_1^{\perp},Y_2^{\perp},Y_3^{\perp},Y_4^{\perp},
V_A^{\perp},V_B^{\perp},V_C^{\perp},V_D^{\perp}\}=
\{V\,:\,\Delta_L^{\perp}V=V\}.
\end{equation*}
\end{lem}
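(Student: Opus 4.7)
The plan is to reduce the claim to a matching count against the explicit basis for the $1$-eigenspace $\{V : \Delta_L^\perp V = V\}$ already provided by Lemma \ref{lem:V.1.espace}. That lemma expresses this eigenspace as the span of the eight normal vector fields
\begin{equation*}
\cos(\theta_1\pm\theta_2)(JX_1\pm JX_2),\qquad \sin(\theta_1\pm\theta_2)(JX_1\pm JX_2),
\end{equation*}
with all four sign combinations allowed independently, so the eigenspace has dimension eight. The task is then just to identify the eight normal vector fields $Y_1^\perp,\ldots,Y_4^\perp,V_A^\perp,\ldots,V_D^\perp$ with this explicit basis (up to nonzero scalars).

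First, I would tabulate the explicit formulas already derived in the excerpt:
\begin{align*}
Y_1^\perp &=\phantom{-}\cos(\theta_1-\theta_2)(JX_1-JX_2), & Y_2^\perp &=-\sin(\theta_1-\theta_2)(JX_1-JX_2),\\
Y_3^\perp &=\phantom{-}\cos(\theta_1+\theta_2)(JX_1-JX_2), & Y_4^\perp &=\phantom{-}\sin(\theta_1+\theta_2)(JX_1-JX_2),\\
V_A^\perp &=-\cos(\theta_1+\theta_2)(JX_1+JX_2), & V_B^\perp &=-\sin(\theta_1+\theta_2)(JX_1+JX_2),\\
V_C^\perp &=-\cos(\theta_1-\theta_2)(JX_1+JX_2), & V_D^\perp &=\phantom{-}\sin(\theta_1-\theta_2)(JX_1+JX_2).
\end{align*}
Each is manifestly of the form $\cos(\theta_1\pm\theta_2)(JX_1\pm JX_2)$ or $\sin(\theta_1\pm\theta_2)(JX_1\pm JX_2)$ up to an overall sign, so by Lemma \ref{lem:V.1.espace} each is a $1$-eigenfunction of $\Delta_L^\perp$. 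Hence $\Span\{Y_1^\perp,\ldots,V_D^\perp\}$ is contained in the $1$-eigenspace.

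For the reverse inclusion I would just note linear independence. The two possible ``frequency'' factors $\cos(\theta_1\pm\theta_2),\sin(\theta_1\pm\theta_2)$ give four $L^2$-orthogonal real scalar functions on the torus (by orthogonality of distinct characters), and multiplying by the two $L^2$-orthogonal normal directions $JX_1+JX_2$ and $JX_1-JX_2$ produces an $L^2$-orthogonal family of eight nonzero vector fields; this orthogonality is trivial to record using \eqref{eq:L.metric.0}, \eqref{eq:vol} and the standard orthogonality relations on $\mathcal{S}^1\times \mathcal{S}^1$. The eight vectors $Y_1^\perp,\ldots,V_D^\perp$ are, after the $\pm$ signs are absorbed, precisely one nonzero scalar multiple of each of these eight orthogonal basis elements, hence linearly independent.

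Since the $1$-eigenspace is eight-dimensional and contains the eight linearly independent elements $Y_1^\perp,\ldots,V_D^\perp$, equality of spans follows. There is no real obstacle here: the entire statement is a bookkeeping consequence of Lemma \ref{lem:V.1.espace} combined with the explicit formulas \eqref{eq:Y1perp}, \eqref{eq:Y2perp}, \eqref{eq:VAperp}, \eqref{eq:VBperp}, \eqref{eq:Y3perp.Y4perp}, \eqref{eq:VCperp.VDperp} already recorded above, and the only thing to be careful about is making sure that the four sign combinations inside and outside the trigonometric arguments are all realised among $Y_1^\perp,\ldots,V_D^\perp$, which the table above confirms.
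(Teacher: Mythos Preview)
Your proposal is correct and takes essentially the same approach as the paper: the paper simply states that the result follows from Lemma~\ref{lem:V.1.espace}, and your argument is a careful unpacking of precisely that observation, matching the eight listed normal vector fields against the explicit eight-element basis of the $1$-eigenspace given there.
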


\section{Hamiltonian instability}\label{sec:instability}

It is known that the Clifford torus is $F$-unstable under Lagrangian variations \cite[Theorem 8]{LeeLue} but $F$-stable under Hamiltonian variations (this is claimed in \cite{LeeLue,LiZhang}, even though \cite[Main Theorem 6]{ArezzoSun} seems erroneously to claim the contrary).  We will verify these claims explicitly and show more: that the Clifford torus is entropy unstable under Hamiltonian variations.  

From this we will prove that the Clifford torus is unstable under Lagrangian mean curvature flow under $C^{\infty}$-small Hamiltonian variations (even $\UU(1)$-equivariant and graphical).

\vspace{-4pt}
\subsection{Second variation}  We first verify the Lagrangian instability result in \cite[Theorem 8]{LeeLue}.

\begin{lem}
In the direction of the Lagrangian variation $U_2=JX_1-JX_2$ of the Clifford torus $L$, the second variation of $F$ is strictly negative.  Thus, $L$ is Lagrangian $F$-unstable.
\end{lem}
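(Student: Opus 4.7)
The plan is to apply the second variation formula from Lemma~\ref{lem:L.secondvar} directly to $V=U_2=JX_1-JX_2$, check that the translation and dilation ``correction'' terms vanish, and observe that the remaining contribution is strictly negative.

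First I would note that $U_2$ defines a genuine Lagrangian (but non-Hamiltonian) variation: its dual $1$-form (to $JU_2=-X_1+X_2$) is proportional to $-d\theta_1+d\theta_2$, which is closed but not exact, so the variation is Lagrangian. By Lemma~\ref{lem:dilations}, $U_2$ lies in the kernel of $\Delta_L^{\perp}$, so that $(\Delta_L^{\perp}-1)U_2=-U_2$, and pointwise $|U_2|^2=|JX_1|^2+|JX_2|^2=4$, giving
\begin{equation*}
\langle U_2,(\Delta_L^{\perp}-1)U_2\rangle_{L^2}=-\int_L 4\,\vol_L=-32\pi^2.
\end{equation*}

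Next I would address $F$-instability, which requires the full second variation (involving the space-time point variations $\xi,\tau$) to remain negative. Using $\vol_L=2\,d\theta_1\,d\theta_2$, the integrals $\int_{0}^{2\pi}\cos\theta_j\,d\theta_j$ and $\int_{0}^{2\pi}\sin\theta_j\,d\theta_j$ both vanish, so for any constant vector $\xi\in\bC^2$ the cross term $\langle U_2,\xi\rangle_{L^2}$ is zero. Likewise, from \eqref{eq:L.X} we have $X=-JX_1-JX_2$, and the orthogonality relations in \eqref{eq:L.metric.0} give $\langle U_2,X\rangle=-|JX_1|^2+|JX_2|^2=0$ pointwise, so $\tau\langle U_2,X\rangle_{L^2}=0$ too.

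Plugging these into the general formula from Lemma~\ref{lem:L.secondvar} yields
\begin{equation*}
4\pi e\left.\frac{\partial^2F}{\partial s^2}\right|_{s=0}=-32\pi^2-\tfrac{1}{2}\|\xi^{\perp}\|_{L^2}^2-8\pi^2\tau^2\leq -32\pi^2<0
\end{equation*}
for every choice of $\xi$ and $\tau$, which is exactly $F$-instability along $U_2$. There is essentially no obstacle here — the computation is routine once one observes that $U_2$ is harmonic and orthogonal pointwise to both $X$ and to each of the constant translation directions; the only point requiring a moment's care is checking that \emph{no} choice of $(\xi,\tau)$ can rescue the sign, which follows from the vanishing of both cross terms combined with the non-positivity of $-\tfrac{1}{2}\|\xi^{\perp}\|^2-8\pi^2\tau^2$.
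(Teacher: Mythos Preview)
Your proposal is correct and follows essentially the same approach as the paper: establish $\Delta_L^{\perp}U_2=0$, verify that $U_2$ is $L^2$-orthogonal both to $X$ and to the restrictions of constant vectors, and plug into the general second variation formula from Lemma~\ref{lem:L.secondvar} to conclude negativity for every $(\xi,\tau)$. The only cosmetic differences are that you compute the value $-32\pi^2$ explicitly and check the orthogonality to translations by direct integration, whereas the paper invokes the eigenspace description in Lemmas~\ref{lem:V.1.espace} and~\ref{lem:translations}; neither changes the argument.
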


\begin{proof}
We have that $\Delta_L^{\perp}U_2=0$ by Lemma \ref{lem:V.1.espace}.  We know that $U_2$ is orthogonal to $X=JX_1+JX_2$ by \eqref{eq:L.metric.0}, and 
$U_2$ is orthogonal to the restriction of any constant vector to $L$ by Lemmas \ref{lem:V.1.espace} and \ref{lem:translations}.  The result follows
 from taking $V=U_2$ in Lemma \ref{lem:L.secondvar}.  
\end{proof}

\noindent This Lagrangian variation corresponds to ``squashing'' one geodesic circle direction in the torus whilst ``expanding'' the orthogonal geodesic circle direction.  We reiterate that this variation is \emph{not} Hamiltonian.

We know, by Lemmas \ref{lem:V.1.espace}, \ref{lem:L.secondvar}  and \ref{lem:translations}, that the second variation of $F$ will be negative in the direction of any translation since they have eigenvalue $\frac{1}{2}$ for $\Delta_L^{\perp}$.  These transformations are also Hamiltonian so they give unstable Hamiltonian directions for $F$.  However, we now show that these are the only unstable directions, verifying the Hamiltonian $F$-stability claimed in \cite{LeeLue,LiZhang}.

\begin{lem}\label{lem:L.HamFstable}
In the direction of any Hamiltonian variation of the Clifford torus $L$ orthogonal to the translations, the second variation of $F$ is non-negative.  Thus, $L$ is Hamiltonian $F$-stable.
\end{lem}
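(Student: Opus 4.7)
The plan is to reduce the second-variation inequality to a positivity statement for an ordinary scalar operator, and then use the spectral description in Lemma \ref{lem:f.1.espace}.

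First, I would parametrise a Hamiltonian variation as $V=J\nabla f$ for a smooth function $f$ on $L$, and translate the orthogonality hypothesis into a condition on $f$. Since the translations project to $\{J\nabla g:\Delta_Lg=\tfrac12 g\}$ by Lemma \ref{lem:translations}, and
\begin{equation*}
\langle J\nabla f,J\nabla g\rangle_{L^2}=\langle\nabla f,\nabla g\rangle_{L^2}=\langle f,\Delta_L g\rangle_{L^2}=\tfrac12\langle f,g\rangle_{L^2}
\end{equation*}
whenever $g$ is a $\tfrac12$-eigenfunction, orthogonality of $V$ to all translations is equivalent to $f$ being $L^2$-orthogonal to the $\tfrac12$-eigenspace of $\Delta_L$.

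Next, I would compute $\Delta_L^{\perp}V$ on such a Hamiltonian normal field. Writing $\nabla f=\tfrac12(\partial_{\theta_1}f)X_1+\tfrac12(\partial_{\theta_2}f)X_2$ using the metric \eqref{eq:L.metric}, one sees that $V=J\nabla f$ has components $f_1=\tfrac12\partial_{\theta_1}f$ and $f_2=\tfrac12\partial_{\theta_2}f$ in the decomposition \eqref{eq:L.normal}. Since $\Delta_L$ commutes with the partial derivatives (it is a constant-coefficient operator in $\theta_1,\theta_2$), formula \eqref{eq:L.nDelta} gives the clean identity
\begin{equation*}
\Delta_L^{\perp}(J\nabla f)=J\nabla(\Delta_L f).
\end{equation*}

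Third, by the first part of Lemma \ref{lem:L.secondvar} (taking $\xi=0$, $\tau=0$) and an integration by parts,
\begin{equation*}
4\pi e\left.\frac{\partial^2 F}{\partial s^2}\right|_{s=0}
=\langle V,(\Delta_L^{\perp}-1)V\rangle_{L^2}
=\langle\nabla f,\nabla(\Delta_L f-f)\rangle_{L^2}
=\langle f,\Delta_L(\Delta_L-1)f\rangle_{L^2}.
\end{equation*}
Expanding $f$ in an $L^2$-orthonormal basis of eigenfunctions $f=\sum_n f_n$ with $\Delta_L f_n=\lambda_n f_n$, Lemma \ref{lem:f.1.espace} tells us that $\lambda_n\in\{0,\tfrac12,1,\tfrac32,\dots\}$, and the expression above equals $\sum_n \lambda_n(\lambda_n-1)\|f_n\|^2$. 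The only negative contribution comes from $\lambda_n=\tfrac12$, but by the reduction in the first step this component is absent, so every surviving term has $\lambda_n\in\{0\}\cup[1,\infty)$ and hence $\lambda_n(\lambda_n-1)\geq 0$. This gives the desired non-negativity.

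The only mild subtlety is step one — interpreting the $L^2$-orthogonality condition on $V$ as an orthogonality condition on the generating function $f$ — since in general a Hamiltonian $f$ is only defined up to a constant and one must verify that the constant mode (which satisfies $J\nabla f\equiv 0$) does not interfere. This is immediate because the constants are the $0$-eigenspace of $\Delta_L$, contributing $0$ to $\sum_n\lambda_n(\lambda_n-1)\|f_n\|^2$. Apart from this observation the argument is purely a spectral computation, so no genuine obstacle arises.
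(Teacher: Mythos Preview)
Your proof is correct and follows essentially the same spectral argument as the paper: both reduce the orthogonality-to-translations hypothesis (via Lemma~\ref{lem:translations}) to the statement that $f$ has no component in the $\tfrac12$-eigenspace of $\Delta_L$, and then invoke Lemma~\ref{lem:L.secondvar} together with the eigenvalue list in Lemma~\ref{lem:f.1.espace}. Your version is somewhat more explicit, deriving the scalar identity $\langle V,(\Delta_L^{\perp}-1)V\rangle_{L^2}=\langle f,\Delta_L(\Delta_L-1)f\rangle_{L^2}$ and handling the constant mode separately, whereas the paper appeals directly to the component form of \eqref{eq:L.secondvar}; but the substance is the same.
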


\begin{proof}
If $J\nabla f$ is a Hamiltonian vector field orthogonal to the translations, then $f$ must lie in the span of the eigenspaces of $\Delta_L$ of eigenvalue greater than or equal to $1$ by Lemmas \ref{lem:f.1.espace} and \ref{lem:translations}.  The result follows from Lemma \ref{lem:L.secondvar}.
\end{proof}

This result fits well with the following \cite{Oh}.

\begin{thm}\label{thm:L.localmin.vol}
The Clifford torus $L$ is a local minimum for volume under Hamiltonian variations in $\bC^2$.
\end{thm}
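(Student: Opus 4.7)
The plan is to implement the strategy of Oh \cite{Oh}, computing the Hessian of volume under Hamiltonian variations, showing it is non-negative, identifying its kernel with infinitesimal ambient isometries, and upgrading to a local minimum via a slice argument.

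First, check that $L$ is a critical point of volume among Hamiltonian isotopies. For $f \in C^\infty(\bC^2)$ generating the Hamiltonian flow $\phi_s^f$, the first variation is
\begin{equation*}
\left.\frac{d}{ds}\vol(\phi_s^f(L))\right|_{s=0}=-\int_L\langle H,J\nabla_L(f|_L)\rangle\vol_L=\int_L (f|_L)\,\Delta_L\theta\,\vol_L,
\end{equation*}
using $H=J\nabla\theta$ and integration by parts. Since $H$ is a harmonic normal vector field by \eqref{eq:H.harmonic}, equivalently $d\theta=d\theta_1+d\theta_2$ is a harmonic 1-form on the flat torus, we have $\Delta_L\theta=0$ and the first variation vanishes.

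Next, parametrize nearby Hamiltonian-isotopic Lagrangians via Weinstein's tubular neighborhood theorem: a neighborhood of $L$ in $\bC^2$ is symplectomorphic to a neighborhood of the zero section in $T^*L$, and Hamiltonian-isotopic Lagrangians close to $L$ correspond to graphs of exact 1-forms $df$ with $f\in C^\infty(L)/\bR$. Writing $\vol(L_f)$ as a functional of $f$ and expanding to second order yields a self-adjoint Hessian quadratic form $Q(f,f)$ on $C^\infty(L)/\bR$, involving $\Delta_L f$ and lower-order derivatives of $f$ modulated by the second fundamental form, the Lagrangian angle, and the mean curvature. Since $L$ is flat, $Q$ diagonalizes in the Fourier basis of Lemma \ref{lem:f.1.espace} into finite-dimensional blocks indexed by $(m,n)\in\bZ^2$.

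Show that $Q \geq 0$, with $\ker Q$ precisely the 8-dimensional subspace corresponding to infinitesimal ambient isometries of $\bC^2$: the four translation modes $\cos\theta_i,\sin\theta_i$ with Fourier indices $(\pm 1,0),(0,\pm 1)$, which by Lemma \ref{lem:translations} generate translations of $\bC^2$; and the two unitary rotation modes $\cos(\theta_1-\theta_2),\sin(\theta_1-\theta_2)$ with indices $(\pm 1,\mp 1)$, which by \eqref{eq:Y1perp}--\eqref{eq:Y2perp} generate the Hamiltonian $\UU(2)$-rotations $Y_1,Y_2$. Crucially, the same $\Delta_L$-eigenvalue-$1$ subspace also contains the sister modes $\cos(\theta_1+\theta_2),\sin(\theta_1+\theta_2)$ with indices $(\pm 1,\pm 1)$, which by \eqref{eq:VAperp}--\eqref{eq:VBperp} generate the Hamiltonian but non-isometric deformations $V_A,V_B$ (arising from $\mathfrak{sp}(4,\bR)\setminus\mathfrak{u}(2)$); on these modes $Q$ is strictly positive, reflecting that $V_A,V_B$ genuinely increase volume. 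Positivity on all remaining (higher-frequency) Fourier modes follows from the $(\Delta_L f)^2$ leading term dominating the lower-order corrections.

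Finally, the local minimum follows via a slice argument: any Lagrangian Hamiltonian-isotopic to $L$ and sufficiently close to $L$ decomposes as $g(\tilde L)$, where $g \in \bR^4\rtimes\UU(2)$ is an ambient isometry (preserving volume exactly) and $\tilde L$ corresponds under Weinstein to $f\in(\ker Q)^\perp$. On $(\ker Q)^\perp$, $Q$ has a positive spectral gap, so $\vol(\tilde L)\geq\vol(L)$ for $f$ small, and since $\vol(g\tilde L)=\vol(\tilde L)$, the local minimum property follows. The main obstacle is the explicit derivation of $Q$ and the delicate verification that $\ker Q$ matches the isometry directions precisely --- in particular, distinguishing the isometric $\theta_1-\theta_2$ modes (where $Q$ must vanish) from the non-isometric $\theta_1+\theta_2$ modes (where $Q$ must be strictly positive) within the eigenvalue-$1$ subspace of $\Delta_L$, a distinction which is invisible to the $F$-functional Hessian computed in Lemma \ref{lem:L.secondvar}.
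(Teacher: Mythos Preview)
The paper does not supply its own proof of this theorem; it is stated with attribution to Oh \cite{Oh} and used as input elsewhere (e.g.~in the proof of Theorem \ref{thm:flow.stability}). Your proposal is therefore a reconstruction of Oh's argument rather than something to be compared against a proof in the paper.

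Your outline follows the correct strategy and identifies the right issues, but contains a numerical slip: you announce $\ker Q$ as 8-dimensional yet enumerate only the four translation modes and the two $\UU(2)$-rotation modes $\cos(\theta_1-\theta_2),\sin(\theta_1-\theta_2)$, giving 6. This is in fact the correct count (the constant is quotiented out of $C^\infty(L)/\bR$, and the maximal torus in $\UU(2)$ stabilises $L$), so the slip is harmless for the structure of the argument.

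The substantive point you flag --- that the volume Hessian is strictly positive on the $\theta_1+\theta_2$ modes $V_A^{\perp},V_B^{\perp}$ even though the $F$-Hessian of Lemma \ref{lem:L.secondvar} vanishes there --- is correct and can be verified directly from the paper's own computations: from \eqref{eq:vols} one has
\[
\Vol(L_{A_s})=\int_0^{2\pi}\!\!\int_0^{2\pi}2\sqrt{1+\sinh^2 2s\,\sin^2(\theta_1+\theta_2)}\,\rd\theta_1\rd\theta_2=8\pi^2(1+s^2)+O(s^4),
\]
confirming strict positivity. The slice argument you propose, upgrading a nondegenerate Hessian modulo isometries to a genuine local minimum, is standard; the real work, as you acknowledge, lies in Oh's explicit second-variation computation showing $Q\geq 0$ with kernel exactly the six isometry directions.
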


We finally characterise the kernel of the second variation under Hamiltonian variations.

\begin{lem}\label{lem:second.var.zero}
The Hamiltonian vector fields $J\nabla f$ which give directions for which the second variation of $F$ at the Clifford torus $L$ is zero are those where $\Delta_Lf=f$, described in Lemma \ref{lem:f.1.espace2}.
\end{lem}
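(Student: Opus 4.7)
The plan is to combine the first formula of Lemma \ref{lem:L.secondvar} with the eigenspace decomposition from Lemma \ref{lem:f.1.espace}. For a Hamiltonian variation $V=J\nabla f$, the flat relations \eqref{eq:L.metric.0} give $\nabla f = \tfrac12(\partial_{\theta_1}f)X_1 + \tfrac12(\partial_{\theta_2}f)X_2$, so writing $V = f_1 JX_1 + f_2 JX_2$ as in \eqref{eq:L.normal} one has $f_i = \tfrac12 \partial_{\theta_i}f$. Substituting into the first formula of Lemma \ref{lem:L.secondvar} gives
\begin{equation*}
4\pi e \left.\frac{\partial^2 F}{\partial s^2}\right|_{s=0} = \tfrac12\sum_{i=1,2}\langle \partial_{\theta_i}f,(\Delta_L-1)\partial_{\theta_i}f\rangle_{L^2}.
\end{equation*}

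Next I would expand $f=\sum_\lambda \phi_\lambda$ into $L^2$-orthogonal $\Delta_L$-eigenfunctions as in Lemma \ref{lem:f.1.espace}. Using that $\partial_{\theta_i}$ commutes with $\Delta_L$ by \eqref{eq:L.Delta}, together with the integration-by-parts identity $\|\partial_{\theta_1}\phi_\lambda\|_{L^2}^2+\|\partial_{\theta_2}\phi_\lambda\|_{L^2}^2 = 2\lambda\|\phi_\lambda\|_{L^2}^2$ (immediate on the flat torus from \eqref{eq:L.Delta} and \eqref{eq:vol}), the above reduces to
\begin{equation*}
4\pi e \left.\frac{\partial^2 F}{\partial s^2}\right|_{s=0} = \sum_\lambda \lambda(\lambda-1)\|\phi_\lambda\|_{L^2}^2.
\end{equation*}
In the Hamiltonian $F$-stable setting of Lemma \ref{lem:L.HamFstable}, $V$ is taken orthogonal to the translations, so the $\tfrac12$-eigenspace component $\phi_{1/2}$ vanishes; every remaining term in the sum is then non-negative, and is zero only when $\lambda \in \{0,1\}$. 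Hence the second variation vanishes precisely when $\phi_\lambda = 0$ for all $\lambda \geq \tfrac32$, which (since the constant part contributes $J\nabla f = 0$) is equivalent to $\Delta_L f = f$. The resulting four-dimensional space of such $J\nabla f$ is then exactly the one described in Lemma \ref{lem:f.1.espace2}.

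The only step requiring care is the reliance on $\phi_{1/2}=0$: without the restriction to the $F$-stable subspace, the negative $\lambda=\tfrac12$ contribution could in principle balance positive contributions from $\lambda\geq\tfrac32$, producing spurious zeros of the indefinite quadratic form that do not satisfy $\Delta_L f = f$. Invoking Lemma \ref{lem:L.HamFstable} to restrict to the appropriate subspace is therefore what makes the characterisation clean; beyond this, the argument is a direct spectral computation on the flat torus.
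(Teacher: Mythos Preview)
Your argument is correct and follows exactly the line the paper intends: the paper states Lemma \ref{lem:second.var.zero} without proof, as an immediate consequence of Lemma \ref{lem:L.secondvar} together with the eigenspace description in Lemmas \ref{lem:f.1.espace} and \ref{lem:translations}, restricted to the subspace orthogonal to the translations as in Lemma \ref{lem:L.HamFstable}. Your explicit spectral identity $4\pi e\,\partial_s^2F|_{s=0}=\sum_\lambda \lambda(\lambda-1)\|\phi_\lambda\|_{L^2}^2$ makes this reasoning fully transparent, and your discussion of why the $\lambda=\tfrac12$ component must be excluded is exactly the point.
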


By Lemma \ref{lem:f.1.espace2}, two of these variations arise from unitary transformations and therefore are integrable directions, in the sense that the $F$-functional is constant under these transformations.  However, we need to analyse further the other two directions, as we only currently know  that the $F$-functional is non-decreasing in these directions to second order.

\vspace{-4pt}
\subsection{Entropy}  We now return to the Lagrangians $L_{A_s}$ introduced in \eqref{eq:LAs}.  By Lemmas \ref{lem:f.1.espace2} and \ref{lem:second.var.zero}, they are generated at $s=0$ by a Hamiltonian vector field for which the second variation at $s=0$ is zero. If we let $X(s)$ be the position vector of $L_{A_s}$, we wish to compute the value of the $F$-functional at $(X(s),0,1)$ for $s$ near $0$ to compare it to its value at $(X,0,1)$ as $X(0)=X$.  

After that, we wish to estimate $F$ at $(X(s),x_0,t_0)$ for $s$ near $0$ and $(x_0,t_0)$ near $(0,1)$.  With this information, we wish to compare the value of the entropy $\lambda(X(s))$ relative to $\lambda(X)$, computed in Lemma \ref{lem:L.entropy}.

We begin by showing that the $F$-functional, centred at $(x_0,t_0)=(0,1)$ strictly decreases along the family $L_{A_s}$ for $s$ near $0$.

\begin{prop}\label{prop:F.expand}
For $s$ near $0$, we have that
\begin{equation*}
F(X(s),0,1)-F(X(0),0,1)=-\frac{4\pi}{9 e}s^6+O(s^8).
\end{equation*}
Hence $F(X(s),0,1)$ has a strict local maximum at $s=0$.
\end{prop}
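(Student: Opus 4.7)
The plan is to compute $F(X(s),0,1)$ directly from the explicit parametrization of $L_{A_s}$ in \eqref{eq:LAs} and to Taylor expand to sixth order in $s$.

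First I would compute, using the formula for $L_{A_s}$, that along the family
$$|X(s)|^2 = 4\cosh(2s) + 4\sinh(2s)\cos(\theta_1+\theta_2),$$
and that the induced metric determinant yields
$$\vol_{L_{A_s}} = 2\sqrt{1 + \sinh^2(2s)\sin^2(\theta_1+\theta_2)}\,\rd\theta_1\,\rd\theta_2.$$
The crucial structural observation is that both factors depend on $(\theta_1,\theta_2)$ only through $\phi := \theta_1+\theta_2$. Consequently, after the two-to-one change of variables $(\theta_1, \theta_2) \mapsto (\phi, \theta_1-\theta_2)$ and integrating out the second variable (producing a factor of $2\pi$), the $F$-functional reduces to the one-dimensional integral
$$F(X(s),0,1) = \int_0^{2\pi} e^{-\cosh(2s) - \sinh(2s)\cos\phi}\sqrt{1+\sinh^2(2s)\sin^2\phi}\,\rd\phi.$$

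Next I would factor out the $\phi$-independent piece $e^{-\cosh(2s)}$ and expand the remaining factor $e^{-\sinh(2s)\cos\phi}\sqrt{1+\sinh^2(2s)\sin^2\phi}$ as a Maclaurin series in $s$ up to and including order $s^6$, using the standard expansions of $\cosh$, $\sinh$, $\exp$, and the binomial series for $\sqrt{1+\cdot}$. Every coefficient in this expansion is a polynomial in $\cos\phi$ (via $\sin^2\phi = 1 - \cos^2\phi$), so term-by-term integration reduces to the elementary values $\int_0^{2\pi}\cos^{2k}\phi\,\rd\phi = 2\pi\binom{2k}{k}/4^k$ for $k=0,1,2,3$, with odd powers integrating to zero. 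In particular, $F(X(s),0,1)$ is automatically even in $s$, as can be seen a priori from invariance under $s\mapsto -s$ combined with the substitution $\phi\mapsto\phi+\pi$, so only even-order coefficients need to be determined.

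Three resulting coefficients then have to be identified. The $s^0$ coefficient recovers $2\pi/e$ as in Lemma \ref{lem:L.entropy}; the $s^2$ coefficient vanishes, consistently with Lemma \ref{lem:second.var.zero}, providing a useful sanity check. The genuinely new content is that the $s^4$ coefficient \emph{also} vanishes (which one verifies by a direct but slightly tedious cancellation among the contributions from the three factors), while the surviving $s^6$ coefficient evaluates to $-4\pi/(9e)$. Strictness of the local maximum at $s=0$ follows immediately.

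The main obstacle is the bookkeeping: the expansion to sixth order involves tracking, at each order in $s$, a polynomial in $\cos\phi$ of degree up to six, with contributions from all three factors interacting. The cancellation at order $s^4$ is the delicate step — it is not forced by the second-variation analysis and is precisely what explains why the Hamiltonian instability of the Clifford torus only becomes detectable by $F$ at order $s^6$.
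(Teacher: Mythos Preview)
Your proposal is correct and follows essentially the same route as the paper: both compute $|X(s)|^2$ and the volume form explicitly (your $1+\sinh^2 2s\,\sin^2\phi$ is just $\cosh^2 2s-\sinh^2 2s\,\cos^2\phi$ rewritten), Taylor expand the integrand to sixth order in $s$, and integrate powers of $\cos(\theta_1+\theta_2)$, noting the evenness in $s$ via the $\phi\mapsto\phi+\pi$ symmetry. Your explicit reduction to a one-dimensional integral in $\phi=\theta_1+\theta_2$ is a mild streamlining, but the paper's two-variable integral depends only on $\theta_1+\theta_2$ anyway, so the computations are the same in substance.
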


\begin{proof}
We start by recalling that
\begin{equation} \label{eq:Xs}
X(s)=\sqrt{2}(\cosh se^{i\theta_1}+\sinh se^{-i\theta_2},\sinh se^{-i\theta_1}+\cosh s e^{i\theta_2}).
\end{equation}
Hence,
\begin{align*}
|X(s)|^2&=2\big((\cosh s\cos\theta_1+\sinh s\cos\theta_2)^2+(\cosh s\sin\theta_1-\sinh s\sin\theta_2)^2\nonumber\\
&\qquad+(\sinh s\cos\theta_1+\cosh s\cos\theta_2)^2
+(-\sinh s\sin\theta_1+\cosh s\sin\theta_2)^2\big)\nonumber\\
&=2\big(2(\cosh^2 s+\sinh^2 s)+4\sinh s\cosh s\cos(\theta_1+\theta_2)\big)\nonumber\\
&=4\cosh 2s+4\sinh 2s\cos(\theta_1+\theta_2). 
\end{align*}
We have two tangent vector fields on $L_{A_s}$:
\begin{equation*} 
X_1(s)=\sqrt{2}(i\cosh se^{i\theta_1},-i\sinh se^{-i\theta_1})
\quad\text{and}\quad
X_2(s)=\sqrt{2}(-i\sinh se^{-i\theta_2},i\cosh se^{i\theta_2}).
\end{equation*}
We see that
\begin{gather*}
|X_1(s)|^2=|X_2(s)|^2=2(\cosh^2s+\sinh^2s)=2\cosh 2s,\\
\langle X_1(s),X_2(s)\rangle =-2\cosh s\sinh s\Ree(e^{i(\theta_1+\theta_2)}+e^{-i(\theta_1+\theta_2)})=-2\sinh 2s\cos(\theta_1+\theta_2),
\end{gather*}
so the induced metric on $L_{A_s}$ is
\begin{equation*} 
2\cosh 2s (\rd\theta_1^2+\rd\theta_2^2)-2\sinh 2s\cos(\theta_1+\theta_2)\rd\theta_1\rd\theta_2.
\end{equation*}
Therefore, the volume form on $L_{A_s}$ is
\begin{align} \label{eq:vols}
\vol(s)&=2\sqrt{\cosh^22s-\sinh^22s\cos^2(\theta_1+\theta_2)}\,\rd\theta_1\wedge\rd\theta_2.
\end{align}
Hence, we deduce that
\begin{align}\label{eq:F.expand}
&F(X(s),0,1)=\frac{1}{2\pi e} \int_0^{2\pi} \int_0^{2\pi} 
I(s)\rd\theta_1\rd\theta_2,
\end{align}
where
\begin{equation}\label{eq:I}
I(s)=\sqrt{\cosh^22s-\sinh^22s\cos^2(\theta_1+\theta_2)}
e^{1-\cosh 2s-\sinh 2s\cos(\theta_1+\theta_2)}.
\end{equation}
This is clearly a real analytic function of $s$ and we may then compute its power series expansion about $s=0$.  We see that since changing $s$ to $-s$ in \eqref{eq:I} can be accounted for by translating $\theta_1+\theta_2$ to $\theta_1+\theta_2+\pi$, the power series of \eqref{eq:F.expand} will be even in $s$.  Equivalently, we notice that the odd powers of $s$ in the expansion of $I(s)$ will be linear combinations of odd powers of $\cos(\theta_1+\theta_2)$ and so will integrate to $0$ in \eqref{eq:F.expand}.  We see this explicitly when we compute the first terms of the power series, by calculating:
\begin{gather}
I(0)=1,\quad  I'(0)=-2\cos(\theta_1+\theta_2),\quad  I''(0)=0,\label{eq:I.expand1}\\ 
\frac{I^{(3)}(0)}{3!}=-\frac{4}{3}\cos(\theta_1+\theta_2)+\frac{8}{3}\cos^3(\theta_1+\theta_2),\label{eq:I.expand3}\\
\frac{I^{(4)}(0)}{4!}=-2+8\cos^2(\theta_1+\theta_2)-\frac{16}{3}\cos^4(\theta_1+\theta_2),\label{eq:I.expand4}\\
\frac{I^{(5)}(0)}{5!}=\frac{56}{15}\cos(\theta_1+\theta_2)-\frac{32}{3}\cos^3(\theta_1+\theta_2)+\frac{32}{5}\cos^5(\theta_1+\theta_2),\label{eq:I.expand5}\\
\frac{I^{(6)}(0)}{6!}=\frac{4}{3}-\frac{32}{3}\cos^2(\theta_1+\theta_2)+\frac{160}{9}\cos^4(\theta_1+\theta_2)-\frac{416}{45}\cos^6(\theta_1+\theta_2).\label{eq:I.expand6}
\end{gather}
Notice that \eqref{eq:I.expand1} implies that, for $s$ near $0$,
\begin{equation*}
F(X(s),0,1)=\frac{2\pi}{e}+O(s^3)=F(X,0,1)+O(s^3),
\end{equation*}
which is consistent with the fact that $X$ is a critical point for $F$ and that the second variation is zero in the direction $\frac{\partial X(s)}{\partial s}|_{s=0}$.  As already observed, the terms in \eqref{eq:I.expand3} and \eqref{eq:I.expand5} integrate to $0$.  It is also elementary to see from \eqref{eq:I.expand4} and \eqref{eq:I.expand6} that:
\begin{equation*}
\int_0^{2\pi} \int_0^{2\pi} 
\frac{I^{(4)}(0)}{4!}\rd\theta_1\rd\theta_2=0\quad\text{and}
\quad \int_0^{2\pi} \int_0^{2\pi} 
\frac{I^{(6)}(0)}{6!}\rd\theta_1\rd\theta_2=-\frac{8}{9}\pi^2.
\end{equation*}
The result now follows.
\end{proof}

We now consider the value of the $F$-functional for $L_{A_s}$ for space-time centres near $(0,1)$.

\begin{prop}\label{prop:F.expand2}
Let $X(s)$ denote the position of $L_{A_s}$ given in \eqref{eq:LAs}. Then there exists $s_0>0$ and $r_0>0$ such that whenever  $(x_0,t_0)$ lies in the set
 $$S=\{(x_0,t_0)\in\bC^2\times\bR^+\,:\,|x_0|^2 + 2|t_0-1|^2\leq r_0^2\}, $$  
 and $|s|\leq s_0$ we have
\begin{equation*}
F(X(s),x_0,t_0)\leq F(X(0),0,1) - \frac{\pi}{4e} (|x_0|^2 + 2|t_0-1|^2) - \frac{2\pi}{9e} s^6. 
\end{equation*}
\end{prop}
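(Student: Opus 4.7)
The plan is to promote the pointwise estimate in Proposition \ref{prop:F.expand} to a full neighborhood estimate via Taylor expansion, viewing $G(s, x_0, t_0) := F(X(s), x_0, t_0)$ as a smooth function of $(s, x_0, t_0)$ near the degenerate critical point $(0, 0, 1)$. All first-order derivatives of $G$ vanish there: $\partial_{x_0} G$ and $\partial_{t_0} G$ vanish because $L = X(0)$ is a self-shrinker centered at $(0,1)$, while $\partial_s G(0, 0, 1) = 0$ because $V_A^\perp = \partial_s X(s)|_{s=0}$ lies in the kernel of the second variation (Lemma \ref{lem:second.var.zero}).

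Next I would compute the Hessian of $G$ at $(0, 0, 1)$. The pure $(x_0, t_0)$ block is obtained directly from the definition of $F$ or from Lemma \ref{lem:L.secondvar} with $V = 0$: $\partial_{x_0^i}\partial_{x_0^j} G = -\frac{\pi}{2e}\delta_{ij}$, $\partial_{t_0}^2 G = -\frac{2\pi}{e}$, with the mixed $\partial_{x_0^i}\partial_{t_0}$ entries vanishing by symmetry. The key substantive point is the vanishing of all mixed second derivatives involving $\partial_s$: by Lemma \ref{lem:L.secondvar}, $\partial_s \partial_{x_0^i} G(0,0,1)$ and $\partial_s \partial_{t_0} G(0,0,1)$ reduce, up to constants, to the $L^2$-pairings on $L$ of $V_A^\perp$ against the normal projection of the $i$-th coordinate vector and against $X^\perp$ respectively. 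Since $V_A^\perp$ lies in the $1$-eigenspace of $\Delta_L^\perp$ (Lemma \ref{lem:V.1.espace.2}), the translation fields lie in the $\frac{1}{2}$-eigenspace (Lemmas \ref{lem:V.1.espace} and \ref{lem:translations}), and $X^\perp$ lies in the kernel (Lemma \ref{lem:dilations}), all these pairings vanish by orthogonality of distinct eigenspaces.

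Combining these vanishings with Proposition \ref{prop:F.expand}, Taylor's theorem gives
\begin{equation*}
G(s, x_0, t_0) = \frac{2\pi}{e} - \frac{\pi}{4e}|x_0|^2 - \frac{\pi}{e}(t_0-1)^2 - \frac{4\pi}{9e}s^6 + R(s, x_0, t_0-1),
\end{equation*}
where $R$ collects the higher-order pure and cross terms. To control $R$, I would exploit the discrete symmetry $(\theta_1, \theta_2) \mapsto (\theta_1 + \pi, \theta_2 + \pi)$ of $L_{A_s}$, under which $X(s) \mapsto -X(s)$ while $|X(s)|^2$ and $\vol(s)$ are invariant: this forces $\partial_{x_0^i} G(s, 0, t_0) \equiv 0$ identically in $(s, t_0)$, eliminating all terms in $R$ linear in $x_0$. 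An analogous evenness in $s$ (established as in the proof of Proposition \ref{prop:F.expand}) shows the mixed $s$--$(t_0-1)$ cross terms start at order $s^2(t_0-1)$, which Cauchy--Schwarz splits as $\varepsilon(t_0-1)^2 + O(s^4)$.

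Finally I would choose $s_0, r_0 > 0$ small enough that $R$ is uniformly dominated by the margin between the true Hessian coefficients and the weaker coefficients in the claim, which for the $(t_0-1)^2$ and $s^6$ terms leaves a factor-of-two of slack for absorbing the remainder. The main obstacle is the tightness in the $|x_0|^2$ direction: since the target coefficient $\frac{\pi}{4e}$ exactly matches the leading Hessian contribution, the argument cannot rely on simple slack and must instead exploit the symmetry $\partial_{x_0^i} G(s, 0, t_0) \equiv 0$ together with the strict negativity of $\partial_{x_0}^2 G(s, 0, 1)$ uniformly for $|s| \leq s_0$ in order to absorb the remaining higher-order $x_0$-contributions in $R$, which all involve $|x_0|$ to degree at least two and have coefficients of order $r_0$ or $s_0^2$.
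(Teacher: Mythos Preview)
Your approach --- a multivariable Taylor expansion of $G(s,x_0,t_0)=F(X(s),x_0,t_0)$ about $(0,0,1)$, with the $(x_0,t_0)$--Hessian read off from Lemma~\ref{lem:L.secondvar} and the mixed $\partial_s$--derivatives killed by eigenspace orthogonality --- is exactly the paper's strategy. The paper packages it by writing $(x_0,t_0)=(r\xi,1+r\tau)$ with $|\xi|^2+2\tau^2=1$ and Taylor--expanding $f=2\pi e\,F$ in $(r,s)$, asserting (via Mathematica) the leading term $-\pi^2 r^2$, and then halving that coefficient to absorb the remainders $O(r^2s)+O(r^3)+O(s^7)$.

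Your more careful Hessian computation is correct, and it exposes a real problem: the Taylor coefficient of $|x_0|^2$ in $F$ is exactly $-\tfrac{\pi}{4e}$ (not $-\tfrac{\pi}{2e}$), so the $r^2$--coefficient in $f$ is $-\bigl(\tfrac{\pi^2}{2}|\xi|^2+2\pi^2\tau^2\bigr)$ rather than $-\pi^2$. There is therefore no slack in the $x_0$--direction, and the stated constant cannot be achieved by either argument. In fact the inequality is false as written: at $s=0$, $t_0=1$, $x_0=(a,0)\in\bC^2$ one has $|X-x_0|^2=4+a^2-2\sqrt{2}\,a\cos\theta_1$, and expanding the Gaussian integral to fourth order gives
\[
F\big(X,(a,0),1\big)=\tfrac{2\pi}{e}-\tfrac{\pi}{4e}\,a^2+\tfrac{\pi}{128e}\,a^4+O(a^6),
\]
which exceeds $\tfrac{2\pi}{e}-\tfrac{\pi}{4e}a^2$ for every small $a\neq 0$. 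Your symmetry observation $G(s,x_0,t_0)=G(s,-x_0,t_0)$ is valid and kills the odd--in--$x_0$ terms, but it cannot defeat a positive quartic correction; the ``uniform strict negativity of $\partial_{x_0}^2G$'' you invoke only yields a coefficient $-\tfrac{\pi}{4e}(1-\epsilon)$, not $-\tfrac{\pi}{4e}$.

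Both your argument and the paper's do establish the inequality with $\tfrac{\pi}{4e}$ replaced by any strictly smaller constant (say $\tfrac{\pi}{8e}$): then there is genuine slack in every direction and the remainders absorb exactly as you describe. This weakened version is all that Theorem~\ref{thm:entropy.expand} requires.
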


\begin{proof}
We know from \eqref{eq:vols} that if $x_0\in\bC^2$ and $t_0\in\bR^+$ then
\begin{equation*}
F(X(s),x_0,t_0)=\frac{1}{2\pi e}\int_0^{2\pi}\int_0^{2\pi}I(s,x_0,t_0)\rd\theta_1\rd\theta_2,
\end{equation*}
where
\begin{equation*}
I(s,x_0,t_0)=\frac{1}{t_0}\sqrt{\cosh^22s-\sinh^22s\cos^2(\theta_1+\theta_2)}
e^{1-\frac{|X(s)-x_0|^2}{4t_0}}.
\end{equation*}
Pick any $(\xi,\tau) \in\bC^2\times\bR$ with $|\xi|^2 + 2|\tau|^2 = 1 $, and define
$$ f(r,s)= \int_0^{2\pi}\int_0^{2\pi}I(s,r \xi, 1 + r\tau)\rd\theta_1\rd\theta_2.$$
Performing a Taylor expansion using \eqref{eq:Xs} (and with the help of Mathematica) around $(r,s)=(0,0)$ yields
\begin{equation*}\begin{split}
 f(r,s) &= f(0,0)  - \pi^2 (|\xi|^2 + 2 \tau^2) r^2  - \frac{8}{9}\pi^2 s^6 + O(r^2s) + O(r^3) + O(s^7)\\
&= f(0,0)  - \pi^2 r^2  - \frac{8}{9}\pi^2 s^6 + O(r^2s) + O(r^3) + O(s^7).
\end{split}
\end{equation*}
We can thus choose $r_0,s_0>0$ sufficiently small  such that for $ |r| \leq r_0$ and $|s| \leq s_0$ we have
$$ f(r,s) \leq f(0,0)  - \frac{1}{2} \pi^2 r^2  - \frac{4}{9}\pi^2 s^6.$$
Since this estimate is uniform in $(\xi,\tau)$, this yields the desired statement.
\end{proof}

We can now combine Propositions \ref{prop:F.expand} and \ref{prop:F.expand2} to give our first key result.

\begin{thm}\label{thm:entropy.expand}
For $s$ near $0$ we have that
\begin{equation*}
\lambda(X(s))\leq \lambda(X(0))-\frac{2\pi}{9e}s^6.
\end{equation*}
Hence, the entropy $\lambda(X(s))$ has a local maximum at $s=0$.
\end{thm}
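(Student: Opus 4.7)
The plan is to upgrade Proposition \ref{prop:F.expand2}, which only controls $F(X(s),x_0,t_0)$ for $(x_0,t_0)\in S$, to a bound on $\sup_{(x_0,t_0)} F(X(s),x_0,t_0) = \lambda(X(s))$ over all space-time points. The key is to show that for $s$ sufficiently small, any near-maximiser of $F(X(s),\cdot,\cdot)$ already lies inside $S$, so that Proposition \ref{prop:F.expand2} applies directly.

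First I would verify that $(x_0,t_0)=(0,1)$ is the unique maximiser of $F(X(0),\cdot,\cdot)$. By Lemma \ref{lem:entropy}(c) any maximiser $(x_0',t_0')$ would force $L$ to satisfy
\[
(t_0'-1)X^{\perp} = -(x_0')^{\perp}
\]
pointwise on $L$. By \eqref{eq:L.X} the left-hand side lies in the $0$-eigenspace of $\Delta_L^{\perp}$, while by \eqref{eq:trans1}--\eqref{eq:trans2} and Lemma \ref{lem:translations} the right-hand side lies in the $\tfrac{1}{2}$-eigenspace; orthogonality of these eigenspaces forces $t_0'=1$ and $x_0'=0$. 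Together with the standard asymptotics $F(X(0),x_0,t_0)\to 0$ as $|x_0|+t_0\to\infty$, and $\limsup_{t_0\to 0^+} F(X(0),x_0,t_0) \leq 1 < 2\pi/e$ (the multiplicity-one Gaussian density of the smooth embedded surface $L$), this uniqueness yields a $\delta>0$ with $F(X(0),x_0,t_0)\leq \tfrac{2\pi}{e}-3\delta$ for all $(x_0,t_0)\notin S$.

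Then I would transfer this bound to small $s$. On any compact subset $K\subset\bC^2\times\bR^+$ the map $(s,x_0,t_0)\mapsto F(X(s),x_0,t_0)$ is jointly continuous, so for $|s|$ small enough $F(X(s),\cdot,\cdot)\leq \tfrac{2\pi}{e}-2\delta$ on $K\setminus S$. Outside a suitably chosen large $K$, one controls $F(X(s))$ uniformly in $s$ via $F(X(s),x_0,t_0)\leq \mathrm{Area}(X(s))/(4\pi t_0)$ for large $t_0$, Gaussian decay in $|x_0|$, and upper semicontinuity of the density as $t_0\to 0^+$ for the smooth family $\{L_{A_s}\}$. Shrinking $s_0$ once more so that $\tfrac{2\pi}{9e}s_0^6\leq 2\delta$, the two bounds combine with Proposition \ref{prop:F.expand2} on $S$ to give
\[
\lambda(X(s)) = \sup_{(x_0,t_0)} F(X(s),x_0,t_0) \leq \frac{2\pi}{e} - \frac{2\pi}{9e}s^6 = \lambda(X(0)) - \frac{2\pi}{9e}s^6,
\]
as required.

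The main obstacle is the uniform-in-$s$ control as $t_0\to 0^+$: one must argue that the multiplicity-one Gaussian density of the perturbed tori $L_{A_s}$ cannot spike above $1$ under the small perturbation, so that no spurious near-maximiser of $F(X(s),\cdot,\cdot)$ escapes $S$ towards the boundary $\{t_0=0\}$. This is a standard consequence of the smooth convergence $L_{A_s}\to L$ together with upper semicontinuity of Gaussian density under such convergence, but it is essential, since Proposition \ref{prop:F.expand2} is purely local near $(0,1)$.
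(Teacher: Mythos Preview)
Your overall strategy matches the paper's proof: establish that the supremum in $\lambda(X)$ is uniquely achieved at $(0,1)$, combine this with Gaussian density bounds near $t_0=0$ and decay at infinity to localise the entropy of $L_{A_s}$ to a neighbourhood of $(0,1)$ for small $s$, and then invoke Proposition~\ref{prop:F.expand2}.

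There is, however, a gap in your uniqueness step. You invoke Lemma~\ref{lem:entropy}(c) to deduce that any maximiser $(x_0',t_0')$ of $F(X,\cdot,\cdot)$ forces the \emph{pointwise} identity $(t_0'-1)X^\perp=-(x_0')^\perp$, i.e.\ that $L$ is also a self-shrinker with centre $(x_0',t_0')$. But Lemma~\ref{lem:entropy}(c) only characterises which \emph{immersions} are critical for $\lambda$; it does not assert that every $(x_0',t_0')$ realising the supremum for a fixed self-shrinker must itself be a shrinker centre. The fact that $(x_0',t_0')$ is a critical point of $(x_0,t_0)\mapsto F(X,x_0,t_0)$ only yields integral (weighted moment) conditions, not a pointwise equation. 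The paper supplies the missing step via Huisken's monotonicity formula: for the self-similar evolution $\Sigma_t=\sqrt{-t}\,\Sigma$, if $F(\Sigma,x_0',t_0')=\lambda(\Sigma)$ then equality in monotonicity forces the flow to be self-similarly shrinking about $(x_0',t_0'-1)$ as well, which first gives $t_0'=1$ and then, since the entropy would be attained along the entire line through $0$ and $x_0'$, forces $\Sigma$ to split off a Euclidean factor, contradicting compactness. Once this uniqueness is established by the correct argument, your eigenspace computation becomes unnecessary, and the remainder of your outline is correct and essentially identical to the paper's.
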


\begin{proof}
 We first recall that Huisken's monotonicity formula \cite{HuiskenMonotonicity} implies that for a compact self-shrinker $\Sigma$ satisfying \eqref{eq:shrinker}, the entropy $\lambda(\Sigma)$ is uniquely attained at $(0,1)$: we consider the self-similar evolution of $\Sigma$ given by
$\Sigma_{t} = \sqrt{-t} \cdot \Sigma$ for $t \in (-\infty, 0)$. The first observation is that the monotonicity formula implies that the Gaussian density at minus infinity satisfies
\begin{equation*}
\Theta((\Sigma_t)_{t<0}, \infty) = \lambda(\Sigma) .
\end{equation*}
Since this flow is self-similar, this yields that 
\begin{equation*}
 \lambda(\Sigma) = F(\Sigma, 0,1).
 \end{equation*}
Now assume that there is a point $(x_0,t_0) \neq (0,1)$ such that $ \lambda(\Sigma) = F(\Sigma, x_0,t_0)$. The monotonicity formula then implies that $(\Sigma_t)_{t<0}$ is also self-similarly shrinking with respect to the point $(x_0, t_0 -1)$. This already yields that $t_0=1$. The monotonicity formula further implies that the entropy is attained on any point along the line containing $x_0$ and $0$, and thus $\Sigma$ has to split as a product $\Sigma' \times \mathbb{R}$. This contradicts the compactness of $\Sigma$.

Now consider $\Sigma'$ given as an exponential normal graph of $U\in C^{\infty}(N\Sigma)$. We choose $\varepsilon_0>0$ and assume 
\begin{equation}\label{eq:U.epsilon.bound}
\|U\|_{C^1} \leq \varepsilon\leq \varepsilon_0. 
\end{equation}
Note that this implies that for $\varepsilon_0 = \varepsilon_0(\Sigma)>0$ sufficiently small, given any $\eta_0>0$, there exists a $\delta_0 = \delta_0(\Sigma, \eta_0) >0$ such that 
\begin{equation}\label{eq:gaussian estimate}
 F(\Sigma', x_0,t_0) \leq 1+\eta_0
 \end{equation}
for all $x_0 \in \mathbb{C}^2$,  $0<t_0<\delta_0$. We can choose $\eta_0 = \frac{1}{4}(\lambda(\Sigma) -1)>0$ (as $\Sigma$ is not a plane). Since the entropy of $\Sigma$ is uniquely attained at $(0,1)$, given any $r>0$, there exists $0<\eta<\eta_0$ such that
$$ F(\Sigma,x_0,t_0) < \lambda(\Sigma) - 3\eta$$
for all $|x_0| > r$ and $(t_0-1)^2 > r$. Using \eqref{eq:gaussian estimate} we see that we can thus choose $\varepsilon$ sufficiently small in \eqref{eq:U.epsilon.bound} such that
\begin{equation*}
F(\Sigma',x_0,t_0) < \lambda(\Sigma) - 2\eta
\end{equation*}
for all $|x_0| \geq r$ and $(t_0-1)^2 \geq r$ and 
\begin{equation*}
 F(\Sigma',0,1) \geq \lambda(\Sigma)  - \eta .
 \end{equation*}
We deduce that the entropy of $\Sigma'$ is attained in the set $$\{ (x_0,t_0)\in\bC^2\times\bR^+\,:\, |x_0| \leq r, (t_0-1)^2 \leq r\}.$$
Applying this to our set-up, we see that for $s$ small, the entropy $\lambda(X(s))$ is only attained at (possibly non-unique) points $(x_s,t_s)$ with the property $(x_s,t_s) \rightarrow (0,1)$ as $s \rightarrow 0$. The claimed result then follows directly from Proposition \ref{prop:F.expand2}.
\end{proof}

Theorem \ref{thm:entropy.expand} yields the following immediate corollary.

\begin{cor}\label{cor:local.min.entropy}
The Clifford torus is not a local entropy minimiser, even under Hamiltonian variations.
\end{cor}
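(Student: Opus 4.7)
The plan is to observe that the corollary is an essentially immediate consequence of Theorem \ref{thm:entropy.expand} combined with the discussion of Hamiltonian group orbits in $\S$\ref{sec:orbits}. The key point is that the family $L_{A_s}$ introduced in \eqref{eq:LAs}, whose entropy is estimated in Theorem \ref{thm:entropy.expand}, consists of Lagrangians in the Hamiltonian isotopy class of the Clifford torus $L$. This was precisely how $L_{A_s}$ was constructed: one acts by the one-parameter subgroup $\{A_s\}\subseteq\SL(2,\bR)$ on the generating curve $\gamma(\phi)=2e^{i\phi}$ of $L'$, and $\SL(2,\bR)$ is the \emph{linear} Hamiltonian group on $\bR^2$, so the resulting deformation is Hamiltonian; equivalently, the infinitesimal generator $V_A^{\perp}=-2J\nabla(\sin(\theta_1+\theta_2))$ from \eqref{eq:VAperp} is manifestly a Hamiltonian vector field on $L$.

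Given this, the argument is a single step. Fix any $k\geq 0$ and any $C^k$-neighbourhood of $L$ in the space of Hamiltonian perturbations. Since the map $s\mapsto L_{A_s}$ is smooth with $L_{A_0}=L$, we can pick $s>0$ small enough that $L_{A_s}$ lies in this neighbourhood and so that the estimate of Theorem \ref{thm:entropy.expand} gives
\begin{equation*}
\lambda(X(s))\leq \lambda(X(0))-\frac{2\pi}{9e}s^6<\lambda(X(0))=\lambda(L).
\end{equation*}
Thus within any $C^k$-neighbourhood of $L$ among Hamiltonian deformations, there exist elements of strictly lower entropy, which is precisely the statement that $L$ is not a local entropy minimiser under Hamiltonian variations.

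There is no real obstacle at this stage: all of the delicate work — notably establishing that the leading correction at order $s^6$ is strictly negative despite vanishing second variation (Lemma \ref{lem:second.var.zero}), and promoting the pointwise estimate at $(x_0,t_0)=(0,1)$ to the supremum defining $\lambda$ via uniqueness of the attaining space-time point and Gaussian density control — has already been absorbed into Theorem \ref{thm:entropy.expand}. The only thing one needs to flag explicitly in writing is the Hamiltonian isotopy statement for $L_{A_s}$, which is already established in $\S$\ref{sec:orbits}.
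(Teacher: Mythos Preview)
Your proposal is correct and matches the paper's approach: the paper states this corollary immediately after Theorem \ref{thm:entropy.expand} with no separate proof, treating it as a direct consequence. Your explicit observation that $L_{A_s}$ is Hamiltonian isotopic to $L$ (established in $\S$\ref{sec:orbits}) is exactly the additional ingredient needed, and the paper implicitly relies on the same fact.
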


Given that the Clifford torus is the simplest example of a compact Lagrangian self-shrinker in $\bC^2$, Corollary \ref{cor:local.min.entropy} naturally leads one to ask: which Lagrangian self-shrinkers in $\bC^2$ are local minimisers of the entropy under Hamiltonian variations?

\vspace{-4pt}
\subsection{Flow instability}  With these results in hand, we can now prove our flow instability result.

\begin{thm}\label{thm:flow.instability}
For every $\epsilon>0$ and $k\in\bN$, there exists a compact embedded Lagrangian torus $L'$, Hamiltonian isotopic and $\epsilon$-close in $C^{k}$ to the Clifford torus $L$, such that Lagrangian mean curvature flow starting at $L'$ develops a first finite-time singularity  whose blow-up is not $A\cdot L$ for any $A\in \UU(2)$.  Hence, the rescaled Lagrangian mean curvature flow starting at $L'$ does not converge to the Clifford torus.
\end{thm}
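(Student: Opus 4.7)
I plan to take $L' = L_{A_s}$ from \eqref{eq:LAs} with $|s|$ small but nonzero. Since $s \mapsto L_{A_s}$ is smooth with $L_{A_0}=L$, a sufficiently small choice of $|s|$ makes $L_{A_s}$ be $\epsilon$-close to $L$ in $C^k$, and $L_{A_s}$ is already known to be Hamiltonian isotopic to $L$ from the discussion in $\S$\ref{sec:orbits}. Moreover, the family is $\UU(1)$-equivariant and, for $s$ small, is a normal graph over $L$, so the final conclusion of the theorem is obtained by the same $L'$.

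Lagrangian mean curvature flow from $L'$ will exist on a maximal interval $[0,T)$, and compactness of $L'\subset\bR^4$ forces $T<\infty$; for instance, along MCF of a surface in $\bR^4$ one has $(\partial_t-\Delta)|X|^2=-4$, which precludes smooth eternal existence of a closed surface. Let $\Sigma_\infty$ denote any tangent flow (blow-up Brakke flow) at the first singular time, obtained by parabolic rescaling and the usual compactness for Brakke flows.

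The core of the argument is then to combine Theorem \ref{thm:entropy.expand} with Lemma \ref{lem:entropy}. Since entropy is non-increasing under MCF and rescaled MCF and is lower semicontinuous under Brakke-type convergence (Lemma \ref{lem:entropy}(b)), and since it is invariant under rigid motions (Lemma \ref{lem:entropy}(a)), these facts together with Theorem \ref{thm:entropy.expand} give
\[
\lambda(\Sigma_\infty)\le\lambda(L')=\lambda(L_{A_s})\le\lambda(L)-\tfrac{2\pi}{9e}s^6<\lambda(L)=\lambda(A\cdot L)
\]
for every $A\in\UU(2)$. Hence $\Sigma_\infty$ cannot be any unitary image of the Clifford torus, proving the first assertion. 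In particular the rescaled Lagrangian mean curvature flow starting at $L'$ cannot converge to $A\cdot L$, because such convergence would supply $A\cdot L$ as a tangent flow and violate the strict entropy inequality above.

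The principal subtlety is that the instability is genuinely higher-order: by Hamiltonian $F$-stability (Lemma \ref{lem:L.HamFstable}) and Lemma \ref{lem:second.var.zero} there is nothing to exploit at the quadratic level, and the first strictly negative coefficient in the entropy expansion along $L_{A_s}$ only appears at order $s^6$ (Proposition \ref{prop:F.expand}). The entire analytical weight of the argument is therefore carried by Theorem \ref{thm:entropy.expand}; once that strict inequality is in hand, the finite-time singularity formation and the entropy/unitary-invariance comparison finish the theorem in a formal, essentially soft, step.
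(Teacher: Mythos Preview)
Your proposal is correct and follows essentially the same approach as the paper: choose $L'=L_{A_s}$ for small $s$, invoke Theorem \ref{thm:entropy.expand} to get $\lambda(L_{A_s})<\lambda(L)$, and then use the monotonicity and $\UU(2)$-invariance of entropy from Lemma \ref{lem:entropy} to rule out convergence of the rescaled flow to any unitary image of $L$. Your version is slightly more detailed in that you spell out why a finite-time singularity must occur (via the maximum principle on $|X|^2$) and write out the entropy inequality chain explicitly, whereas the paper leaves these points implicit.
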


\begin{proof}
We can choose $L'=L_{A_s}$ for some $s$ sufficiently small to ensure it is $\epsilon$-close in $C^{k}$ to $L$.  The entropy $\lambda(X(s))$ is strictly less than the entropy of $L$ and the entropy is non-increasing under the Lagrangian mean curvature flow (or the rescaled flow) by Lemma \ref{lem:entropy}.  Hence, the rescaled flow cannot converge to any member of the $\UU(2)$-orbit of $L$.
\end{proof}

Theorem \ref{thm:flow.instability} yields the following interesting corollary, which is surprising given the local stability results in Lemma \ref{lem:L.HamFstable} and Theorem \ref{thm:L.localmin.vol}.  In particular, even though the Clifford torus is locally volume minimizing under Hamiltonian variations, it is unstable under Lagrangian mean curvature flow under such perturbations.  This is counter to one's intuition concerning gradient flows, and points to a lack of the expected coercivity of the volume functional in this situation.

\begin{cor}
The Clifford torus is Hamiltonian unstable for arbitrarily small deformations under Lagrangian mean curvature flow.
\end{cor}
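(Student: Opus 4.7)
The plan is to deduce this corollary directly from Theorem \ref{thm:flow.instability}, which is the main technical result of this section. Unpacking the definitions, ``Hamiltonian unstable under Lagrangian mean curvature flow'' means precisely that there exist Lagrangian tori $L'$ in every $C^k$-neighbourhood of $L$, lying in the Hamiltonian isotopy class of $L$, such that the (rescaled) Lagrangian mean curvature flow starting at $L'$ fails to converge (modulo rigid motions) back to $L$. Since Theorem \ref{thm:flow.instability} produces exactly such perturbations $L'=L_{A_s}$ for arbitrarily small $s$, the corollary is immediate.

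More explicitly, I would proceed as follows. First, observe that the family $L_{A_s}$ constructed in \eqref{eq:LAs} is Hamiltonian isotopic to $L$ for all $s$, by the discussion following \eqref{eq:equiv2} (the generator $V_A^\perp$ is Hamiltonian, cf.~\eqref{eq:VAperp}). Second, for any $\epsilon>0$ and $k\in\bN$, Theorem \ref{thm:flow.instability} supplies some $s>0$ so small that $L_{A_s}$ is $\epsilon$-close in $C^k$ to $L$, while the (rescaled) Lagrangian mean curvature flow starting at $L_{A_s}$ does not converge to any element of the $\UU(2)$-orbit of $L$. Since the $\UU(2)$-orbit is precisely the set of Clifford tori obtained from $L$ by rigid (isometric, Lagrangian) motions, this gives the required failure of stability.

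The one conceptual point worth emphasising is \emph{why} Theorem \ref{thm:flow.instability} is available. It relies on the entropy strict-decrease statement of Theorem \ref{thm:entropy.expand}, i.e.\ $\lambda(X(s))<\lambda(X(0))$ for small $s\neq 0$, combined with the monotonicity of entropy under rescaled mean curvature flow (Lemma \ref{lem:entropy}(b)) and its invariance under the $\UU(2)$-action (Lemma \ref{lem:entropy}(a)). Any limit of the rescaled flow from $L_{A_s}$ would have entropy at most $\lambda(X(s))<\lambda(L)$, hence cannot equal $A\cdot L$ for any $A\in\UU(2)$.

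There is no substantial obstacle here: the corollary is essentially a rephrasing of Theorem \ref{thm:flow.instability}, once one notes that the $L_{A_s}$ live in the Hamiltonian isotopy class of $L$. The only mild subtlety is making sure ``instability under Lagrangian mean curvature flow'' is interpreted modulo the symmetry group $\UU(2)$ (since Clifford tori obtained from $L$ by unitary transformations should be regarded as the ``same'' limit); this is already built into the statement of Theorem \ref{thm:flow.instability}.
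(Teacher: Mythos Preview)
Your proposal is correct and matches the paper's approach: the paper presents this corollary with no proof, simply stating that Theorem \ref{thm:flow.instability} ``yields the following interesting corollary,'' and your argument spells out exactly this deduction.
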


 It is worth noting that the work in \cite{NevesFTS} implies that  we can find $L'$ Hamiltonian isotopic to $L$ as in Theorem \ref{thm:flow.instability} which is arbitrarily $C^0$-close, but not $C^1$-close.  The fact that $L'$ given in \cite{NevesFTS} is not $C^1$-close to $L$ is not a technicality, but rather an essential consequence of the construction of $L'$.  Hence, Theorem \ref{thm:flow.instability} strengthens the results of \cite{NevesFTS} in this instance.

The family $L_{A_s}$ we chose in the proof of Theorem \ref{thm:flow.instability} is, up to unitary transformation, the same family considered in \cite[$\S$4]{Grohetal} and \cite[Theorem C]{NevesMonotone}.  The latter implies the following.

\begin{thm}\label{thm:Neves.LAs} For $s$ sufficiently large, Lagrangian mean curvature flow starting at $L_{A_s}$ will develop a first finite-time Type II singularity at the origin, whose blow-up is a transverse pair of special Lagrangian planes with the same Lagrangian angle.
\end{thm}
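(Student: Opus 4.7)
The plan is to reduce the statement to the $\UU(1)$-equivariant setting where \cite[Theorem C]{NevesMonotone} applies directly, via the unitary transformation in \eqref{eq:equiv.rotate}.

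First, conjugating by the unitary matrix in \eqref{eq:equiv.rotate} carries $L_{A_s}$ to the $\mathcal{S}^1$-invariant Lagrangian $L'_{A_s}=\{A_s\gamma(\phi)(\cos\rho,\sin\rho):\phi,\rho\in\bR\}$, whose generating curve in $\bC$ is the ellipse $A_s\gamma(\phi)=2e^s\cos\phi+2ie^{-s}\sin\phi$ with semi-axes $2e^s$ and $2e^{-s}$. Unitary transformations conjugate Lagrangian mean curvature flow and preserve all quantities relevant to the conclusion (singularity type, the special Lagrangian property of blow-ups, and Lagrangian angle), so it suffices to prove the statement for $L'_{A_s}$. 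Crucially, the $\UU(1)$-symmetry is preserved by the flow, reducing the evolution of $L'_{A_s}$ to a weighted curve-shortening flow of the generating curve in $\bC$.

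Second, I would verify the hypotheses of \cite[Theorem C]{NevesMonotone} for $L'_{A_s}$: it is a closed embedded monotone Lagrangian torus (Hamiltonian isotopic to the Clifford torus $L'$, hence monotone with $c=1$ in the normalisation of the excerpt); and for $s$ sufficiently large, the high eccentricity of the generating ellipse forces the Gaussian density of the reduced flow to drop below the density associated with a pair of transverse straight lines through the origin in $\bC$. This density drop is what selects the Type II branch of Neves' dichotomy and rules out the alternative of smooth round convergence of the rescaled equivariant flow to a round circle (which would correspond to the rescaled Lagrangian MCF converging back to the Clifford torus).

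Finally, \cite[Theorem C]{NevesMonotone} then gives that the first finite-time singularity of Lagrangian mean curvature flow starting at $L'_{A_s}$ occurs at the origin, is of Type II, and has a Type I blow-up consisting of a transverse pair of special Lagrangian planes; the equality of their Lagrangian angles follows from the vanishing of the Maslov class, preserved along the flow by Hamiltonian isotopy from a zero-Maslov self-shrinker. Pulling back by the inverse of the unitary matrix in \eqref{eq:equiv.rotate} transfers the conclusion to $L_{A_s}$. The main obstacle is pinning down the threshold $s_0$ beyond which the eccentricity of the generating ellipse is high enough to trigger the Type II alternative, which amounts to a sharp $F$-functional comparison between the weighted ellipse and the planar pair; this quantitative comparison is the heart of the reduction and is precisely what is carried out in \cite[$\S 4$]{Grohetal} and \cite[Theorem C]{NevesMonotone}.
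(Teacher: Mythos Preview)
The paper does not give its own proof of this theorem: it is stated as a direct consequence of \cite[Theorem C]{NevesMonotone}, after observing that the family $L_{A_s}$ is, up to the unitary transformation \eqref{eq:equiv.rotate}, precisely the equivariant family considered in \cite[$\S$4]{Grohetal} and \cite{NevesMonotone}. Your reduction to the $\UU(1)$-equivariant setting via \eqref{eq:equiv.rotate} and invocation of Neves' result is therefore exactly the intended argument, and your outline of how the equivariant picture reduces to a curve flow for the generating ellipse is a correct elaboration of what the paper leaves implicit.

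There is, however, one genuine error. You claim that the equality of the Lagrangian angles of the two blow-up planes ``follows from the vanishing of the Maslov class, preserved along the flow by Hamiltonian isotopy from a zero-Maslov self-shrinker.'' But the Clifford torus is \emph{not} zero-Maslov: its Lagrangian angle is $\theta=\theta_1+\theta_2+\pi$, so $[\rd\theta]\neq 0$ in $H^1(L)$, and the same holds for each $L_{A_s}$. What the paper records (and what Neves' hypothesis requires) is that $L$, and hence $L_{A_s}$, is \emph{monotone} with $c=1$; the equal-angle conclusion is part of Neves' analysis for monotone Lagrangians, not a consequence of a vanishing Maslov class. You should replace that sentence by an appeal to monotonicity.
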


This shows Hamiltonian instability of the Clifford torus for large deformations (i.e.~large $s$), but we have now shown it is true for any sufficiently small deformation (so small $s$).  It is reasonable to ask whether the same behaviour as in Theorem \ref{thm:Neves.LAs} occurs for Lagrangian mean curvature flow starting at $L_{A_s}$ for any $s\neq 0$.  Since $L_{A_s}$ is monotone, by the theory in \cite{NevesMonotone}, it should be enough to show that the first singularity of the flow  starting at $L_{A_s}$ is before time $\cosh 2s$.  This is equivalent to saying that the flow starting at $\frac{1}{\sqrt{\cosh 2s}}L_{A_s}$ becomes singular before time $1$, which is when the Clifford torus (and any self-shrinker satisfying \eqref{eq:shrinker}) shrinks to a point.

\vspace{-4pt}
\subsection{Stability}

In contrast to our instability results we can prove a stability result for the Clifford torus as follows, which utilises our local uniqueness result we shall prove later.

\begin{thm}\label{thm:flow.stability}
Let $L'$ be a compact embedded Lagrangian in $\mathcal{S}^3(2)$,  Hamiltonian isotopic to the Clifford torus $L$.  If $L'$ is sufficiently close to $L$, then $L'$ has a first finite-time Type I singularity at the origin at time 1 and the rescaled Lagrangian mean curvature flow starting at $L'$ converges to $L$, up to some unitary transformation.
\end{thm}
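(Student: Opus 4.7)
The plan is to combine an avoidance-principle upper bound on the singular time with a Lojasiewicz–Simon-type stability analysis of the rescaled Lagrangian mean curvature flow near $L$, using the local rigidity result (Theorem \ref{thm:Cliff.rigid}) to handle the kernel of the linearisation at $L$.

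First, I would use the avoidance principle against the shrinking $3$-sphere $\mathcal{S}^{3}(2\sqrt{1-t})$, which collapses to the origin at $t=1$: since $L'\subset\mathcal{S}^{3}(2)$, any first singular time $T$ of the LMCF starting at $L'$ satisfies $T\leq 1$, and the singular set at $T=1$ must lie at the origin. Conversely, short-time existence and continuous dependence on initial data, together with $L'$ being sufficiently $C^{2}$-close to $L$, force the flow to stay close to the homothetic flow $\sqrt{1-t}\,L$ on any pre-compact sub-interval of $[0,1)$, so $T$ must be close to $1$. Combining this with Huisken's monotonicity and the entropy bound $\lambda(L')$ close to $\lambda(L)=2\pi/e$, and analysing which tangent flows can occur near the Clifford torus's entropy level, one upgrades this to $T=1$ exactly, with the singular set equal to the origin.

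Second, I would pass to the rescaled flow $\tilde L_\tau=(1-t)^{-1/2}L_t$ with $\tau=-\log(1-t)$. The Clifford torus is a stationary point of this flow, and the $F$-functional at $(0,1)$ is a Lyapunov functional whose $L^{2}$-gradient (up to sign) is the shrinker quantity $H+\tfrac{1}{2}X^{\perp}$. Its linearisation at $L$ is the Jacobi operator $\Delta_L^{\perp}-1$ from Lemma \ref{lem:V.1.espace}, with $8$-dimensional kernel spanned by the generators of rigid motions together with the further obstructed deformations catalogued in Section \ref{sec:orbits}. Theorem \ref{thm:Cliff.rigid}, combined with the non-trivial cubic obstruction map from the kernel to itself that will be identified in Section \ref{sec:rigidity}, should deliver a Lojasiewicz–Simon-type inequality for $F$ on a $C^{2,\alpha}$-neighbourhood of the $\UU(2)$-orbit of $L$, after quotienting out the free symmetries.

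Third, the gradient-flow structure of the rescaled flow combined with the Lojasiewicz–Simon inequality gives uniqueness of the limit and smooth convergence $\tilde L_\tau\to A\cdot L$ as $\tau\to\infty$, for some $A\in\UU(2)$, provided $\tilde L_\tau$ remains in the prescribed neighbourhood; a standard bootstrap then establishes this confinement for $L'$ sufficiently close to $L$. Smooth convergence to a compact smooth shrinker automatically yields Type~I, and the alignment of centres pins the singularity at the origin at time $1$. The main obstacle is the Lojasiewicz–Simon step: because the infinitesimal deformations of $L$ are genuinely obstructed rather than integrable, the required inequality cannot be read off from the standard integrable theory and must be extracted from the cubic obstruction of Section \ref{sec:rigidity}. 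This is where the novel rigidity analysis of the paper becomes essential, and where the bulk of the analytic effort of the proof is concentrated.
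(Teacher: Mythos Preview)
Your broad outline---sphere barrier, rescaled flow, a \L ojasiewicz--Simon argument, and rigidity to identify the limit---matches the paper's, but there is a genuine gap in your confinement step. The key ingredient you are missing is the \emph{lower} bound $\mathcal{E}(\tilde L'_\tau)\geq\mathcal{E}(L)$ on Huisken's rescaled monotone quantity. The sphere barrier forces $L'_t\subset\mathcal{S}^3(2\sqrt{1-t})$, so $\tilde L'_\tau\subset\mathcal{S}^3(2)$ and remains Hamiltonian isotopic to $L$ (the Liouville and Maslov classes evolve appropriately under the flow); Oh's local Hamiltonian volume minimisation (Theorem~\ref{thm:L.localmin.vol}) then gives $\Vol(\tilde L'_\tau)\geq\Vol(L)$, i.e.\ $\mathcal{E}(\tilde L'_\tau)\geq\mathcal{E}(L)$. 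This one-sided bound, combined with the monotonicity of $\mathcal{E}$, is what feeds into the \L ojasiewicz--Simon estimate from \cite{Schulze14}, which takes the form $\|V(\tau)-V(\tau_1)\|_{L^2}\leq C\big(\mathcal{E}(V(\tau_1))-\mathcal{E}(L)\big)^\theta$, to bound the $L^2$ length of the trajectory and keep $\tilde L'_\tau$ trapped near $L$. Without it your ``standard bootstrap'' cannot close: Theorem~\ref{thm:Cliff.unstable} exhibits arbitrarily $C^k$-small Hamiltonian perturbations of $L$ (not contained in $\mathcal{S}^3(2)$) for which the rescaled flow \emph{does not} stay near $L$, precisely because their energy drops strictly below $\mathcal{E}(L)$. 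The hypothesis $L'\subset\mathcal{S}^3(2)$ is thus not incidental---it is exactly what supplies the energy lower bound separating this stability statement from the instability of Theorem~\ref{thm:Cliff.unstable}.

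A second, smaller misconception: the \L ojasiewicz--Simon inequality is quoted as a black box from \cite{Schulze14} (it holds because the Gaussian area functional is analytic) and does not need to be extracted from the cubic obstruction. The obstruction analysis of Section~\ref{sec:rigidity} enters only at the very end, via Theorem~\ref{thm:local.uniq}, to identify the limiting self-shrinker as a unitary rotate of $L$; the paper also notes that Brendle's proof of the Lawson conjecture \cite{Lawson} gives an alternative identification, since the limit is an embedded minimal torus in $\mathcal{S}^3(2)$.
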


\begin{proof}
For any Lagrangian $L'$ in $\mathcal{S}^3(2)$ with position vector $X'$, we see that 
\begin{equation*}
4\pi F(X',0,1)=\Vol(L').
\end{equation*} By Theorem \ref{thm:L.localmin.vol}, we therefore know that for any $L'$ as in the statement we have that
\begin{equation}\label{eq:stab.1}
4\pi F(X',0,1)=\Vol(L')\geq \Vol(L)=4\pi F(X,0,1).
\end{equation} 
We can also deduce this result directly from our own calculations.  

By the work in \cite{CasLerMiq}, we know that $L'$ is a torus foliated by Hopf circles, so we can write
\begin{equation*}
L'=\{e^{i(\frac{\theta_1+\theta_2}{2})}\gamma(\theta_1-\theta_2)\,:\,\theta_1,\theta_2\in\bR\}
\end{equation*}
for some curve $\gamma(\theta_1-\theta_2)$ in $\mathcal{S}^2(2)$.  In the case of $L$, we have that
\begin{equation*}
\gamma(\theta_1-\theta_2)=\sqrt{2}(e^{i(\frac{\theta_1-\theta_2}{2})},e^{-i(\frac{\theta_1-\theta_2}{2})}).
\end{equation*}
Therefore, the variation vector field of any Hamiltonian isotopy at $L$ is given by $J\nabla f$ where $f=f(\theta_1-\theta_2)$. 
The Hamiltonian vector fields $V$ on $L$ for which $\langle V,(\Delta_L^{\perp}-1)V\rangle\leq 0$ are given in Lemmas \ref{lem:translations} and \ref{lem:f.1.espace2}.  We see immediately that the only such $V$ of the form $J\nabla f$ where $f$ is a function of $\theta_1-\theta_2$ are $Y_1^{\perp}$ and $Y_2^{\perp}$ which generate unitary transformations.  Thus the second variation of $F$ at $(X,0,1)$ is non-negative for Hamiltonian variations within $\mathcal{S}^3(2)$ and the directions for which the second variation vanishes are given by unitary transformations, under which the $F$-functional is constant.
We therefore have that \eqref{eq:stab.1} holds as claimed.

We now consider the maximal smooth evolution $(L'_t)_{0\leq t<T}$ of $L'=:L'_0$ by (Lagrangian) mean curvature flow. We first recall that also for mean curvature flow of higher codimension, more precisely for mean curvature flow of $k$-dimensional surfaces in $\mathbb{R}^n$, spheres with radius $R(t) = \sqrt{R^2-2kt}$ act as barriers both from the inside and from the outside. Applied to the present set-up this implies that 
\begin{equation*}
 L'_t \subseteq \mathcal{S}^3(2 \sqrt{1-t}) 
  \end{equation*}
for all $0\leq t < T$. Note further that since Lagrangian mean curvature flow preserves the Maslov class $[\rd\theta']$ of $L'$ and the class of the Liouville form of $L'_t$ satisfies 
\begin{equation*}
[\lambda'_t]=2(1-t)[\rd\theta']
\end{equation*} by \cite[Lemma 2.1]{NevesMonotone}, we have that $L'_t$ remains Hamiltonian isotopic to $L_t$ where $(L_t)_{0\leq t<1}$ is the self-similar evolution of the Clifford torus.

We consider the rescaled flow
\begin{equation*}
 \tilde{L}'_\tau = \frac{1}{\sqrt{1-t}}L'_t\subseteq \mathcal{S}^3(2),
  \end{equation*}
where $\tau = - \log(1-t)$, and let $\tilde{X}'_{\tau}$ denotes its position vector in $\bC^2$. Note that this rescaling yields $\tilde{L}_\tau = L$. We now consider Huisken's rescaled monotone quantity:
\begin{equation*}
\mathcal{E}(\tilde{L}'_\tau) = \frac{1}{4\pi}\int_{\tilde{L}'_\tau}  \exp\left(-|\tilde{X}'_{\tau}|^2/4\right)\vol_{\tilde{L}'_{\tau}}=\frac{1}{4\pi e}\Vol(\tilde{L}'_{\tau}),
 \end{equation*}
 which is decreasing in $\tau$. Furthermore, by \eqref{eq:stab.1}, we have
\begin{equation}\label{eq:stab.2}
 \mathcal{E}(\tilde{L}'_\tau) \geq  \mathcal{E}(L)  = \frac{2\pi}{e}.
 \end{equation}
The remaining argument is now a direct application of the \L ojasiewicz-Simon inequality as in \cite{Schulze14}, which we now outline.

We assume that $L'$ can be written as a normal exponential graph over $L$, given by $V\in C^{\infty}(NL)$. Then we can write, at least for $\tau$ sufficiently small, $\tilde{L}'_\tau$ as normal exponential graphs over $L$, given by $V(\tau)\in C^{\infty}(NL)$. 

Let $0<\varepsilon<\sigma_0/2$ be chosen later, and assume that
\begin{equation*}
\|V(0)\|_{C^{2,\alpha}}<\varepsilon.
\end{equation*}
 We consider the set 
\begin{equation*}
 S := \{\tau>0\, :\, \|V(s)\|_{C^{2,\alpha}}\leq\sigma_0 \text{ for all } s \in [0,\tau)\}.
  \end{equation*}
  We aim to show that for $\varepsilon$ sufficiently small and $\sigma_0$ chosen suitably $S = [0,\infty)$.
  
Note that for $\sigma_0$ sufficiently small, since $\varepsilon<\sigma_0/2$, there exists a $\delta >0$, independent of $\varepsilon$ such that $(0,2\delta]\subseteq S$. By higher interior estimates, see for example \cite{White05}, there exists $C_0>0$ such that for all $\varepsilon\in(0,\sigma_0/2)$ we have
\begin{equation}\label{eq:stab.3}
 \|V(\tau)\|_{C^{3,\alpha}} \leq C_0
 \end{equation}
for all $\tau \in S$ with $\tau \geq \delta$.  

We write $\mathcal{E}(V(\tau)) =  \mathcal{E}(\tilde{L}'_\tau)$ and $\mathcal{E}(0) =  \mathcal{E}(L)$. For $\sigma_0$ sufficiently small, depending only on $L$, we have by \cite[Lemma 3.1]{Schulze14}  that there exists $\theta \in (0,1/2)$ and $C_1 = C_1(L)$ such that
\begin{equation}\label{eq:energy.theta.bound}
\sup_{\tau \in [\tau_1, \tau_2]} \|V(\tau)-V(\tau_1)\|_{L^2} \leq C_1 \left(\mathcal{E}(V(\tau_1)) - \mathcal{E}(0)\right)^\theta
\end{equation}
for all $\tau_1,\tau_2\in S$, $\tau_1<\tau_2$. We now fix $\sigma_0$ accordingly so that \eqref{eq:energy.theta.bound} holds. Note that $\mathcal{E}(V(0)) \rightarrow \mathcal{E}(0)$ as $\varepsilon \rightarrow 0$. This implies that 
\begin{equation*}
 \|V(\tau)\|_{L^2} \leq \|V(0)\|_{L^2} +  C_1 \left(\mathcal{E}(V(0)) - \mathcal{E}(0)\right)^\theta .
  \end{equation*}
for all $\tau \in S$. Interpolating the $C^{2,\alpha}$-norm between the $L^2$-norm and the $C^{3,\alpha}$-norm and using \eqref{eq:stab.3}, we see that for $\varepsilon$ sufficiently small we have
\begin{equation*}
 \|V(\tau)\|_{C^{2,\alpha}}\leq \frac{3}{4}\sigma_0
  \end{equation*}
for all $\tau \in S$ with $\tau \geq \delta$, and thus $S = [0,\infty)$ as desired. 

The monotonicity formula then implies that there is a sequence $\tau_i\rightarrow \infty$ such that $\tilde{L}'_{\tau_i}$ converges smoothly to a self-shrinker $L''$ which is a small $C^{2,\alpha}$ normal graph of, say, $V''$ over $L$.

By the \L ojasievicz--Simon inequality we have that $\mathcal{E}(L'') = \mathcal{E}(L)$ and thus by the monotonicity of $\mathcal{E}(V(\tau))$,
\begin{equation*}
 \mathcal{E}(V(\tau)) \rightarrow \mathcal{E}(0)
  \end{equation*}
as $\tau \rightarrow \infty$. Thus by \eqref{eq:energy.theta.bound} we have that $V(\tau)$ is a Cauchy sequence in $L^2(NL)$ and the sequence converges to $V''$.

 By the local uniqueness of the Clifford torus, Theorem \ref{thm:local.uniq}, we have that $L'' = A\cdot L$ for some $A\in\UU(2)$ and the whole sequence converges.  As an alternative to using Theorem \ref{thm:local.uniq} below, we may observe that the argument thus far implies that $L''$ must be a minimal Lagrangian torus contained in $\mathcal{S}^3(2)$, which is embedded as it is a small $C^{2,\alpha}$ graph over $L$.  Hence, $L''$ is $L$ up to a unitary transformation by the proof of the Lawson Conjecture \cite{Lawson}.
\end{proof}

We should be clear that Theorem \ref{thm:flow.stability} actually holds without the additional assumption that $L'$ is close to $L$, as stated in Theorem \ref{thm:Hamiso.CLM} below.  This follows from the work in \cite{CasLerMiq}, as we shall now explain.  However, we wanted to illustrate here in the proof of Theorem \ref{thm:flow.stability} an alternative approach for obtaining an (albeit weaker) stability result for the Clifford torus, which may be applicable in other contexts where the special techniques implemented in \cite{CasLerMiq} may not be valid.

To relate Theorem \ref{thm:flow.stability} to work in \cite{CasLerMiq} we require the following result in curve shortening flow  (cf.~\cite{CasLerMiq,Gage}), which is interesting in its own right. 

\begin{thm}\label{thm:curve.short.s2}
Let $\gamma_0$ be a simple closed curve in $\mathcal{S}^2$ and let $\gamma_t$ be the evolution of $\gamma_0$ under curve shortening flow in $\mathcal{S}^2$.  Then the following are equivalent.
\begin{itemize}
\item[(a)] $\gamma_0$ is Hamiltonian isotopic to an equator in $\mathcal{S}^2$.\\[-2ex]
\item[(b)] $\gamma_0$ divides $\mathcal{S}^2$ into two regions of equal area.\\[-2ex]
\item[(c)] $\gamma_t$ divides $\mathcal{S}^2$ into two regions of equal area for all $t$.\\[-2ex]
\item[(d)] $\gamma_t$ exists for all time and converges to an equator in $\mathcal{S}^2$.
\end{itemize}
\end{thm}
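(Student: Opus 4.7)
The plan is to establish the four-way equivalence via three separate links: (a) $\Leftrightarrow$ (b) by the symplectic topology of $\mathcal{S}^2$, (b) $\Leftrightarrow$ (c) by an elementary ODE for the enclosed area, and (c) $\Leftrightarrow$ (d) by Grayson's convergence theorem combined with the same ODE.  The main non-elementary input is Grayson's theorem; everything else reduces to a Gauss--Bonnet computation and the standard fact that on $\mathcal{S}^2$ every area-preserving diffeomorphism isotopic to the identity is Hamiltonian isotopic to the identity, since $H^1(\mathcal{S}^2,\bR)=0$ makes the flux homomorphism trivial.

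For (a) $\Leftrightarrow$ (b), the forward direction is immediate because any Hamiltonian isotopy of $(\mathcal{S}^2,\omega)$ preserves $\omega$, and the equator splits $\mathcal{S}^2$ into two regions of area $2\pi$ each.  For the converse, I would use that on $\mathcal{S}^2$ the Hamiltonian isotopy class of a simple closed curve is determined by the areas of the two complementary disks: a Moser-type argument applied to the pullback of $\omega$ by any orientation-preserving diffeomorphism sending an equator to $\gamma_0$ produces an area-preserving diffeomorphism taking the equator to $\gamma_0$, and by the triviality of the flux such a diffeomorphism is automatically Hamiltonian isotopic to the identity.

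For (b) $\Leftrightarrow$ (c), let $R(t)$ denote one of the two disks cut out by $\gamma_t$ and set $A(t)=\mathrm{Area}(R(t))$.  The first variation of area under the flow gives $A'(t)=-\oint_{\gamma_t}\kappa_g\,\rd s$, with $\kappa_g$ the geodesic curvature relative to the inward normal to $R(t)$, while Gauss--Bonnet on $R(t)$ with Gaussian curvature $1$ gives $A(t)+\oint_{\gamma_t}\kappa_g\,\rd s=2\pi$.  Combining these yields
\begin{equation*}
A'(t)=A(t)-2\pi,
\end{equation*}
so $A(t)-2\pi=(A(0)-2\pi)e^{t}$, and hence $A(t)\equiv 2\pi$ if and only if $A(0)=2\pi$.

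For (c) $\Rightarrow$ (d), I would invoke Grayson's theorem for curve shortening on surfaces: an embedded closed curve on a closed Riemannian surface under curve shortening flow either shrinks to a round point in finite time or exists for all time and converges smoothly to a simple closed geodesic.  Under (c) both disks keep area $2\pi$, so $\gamma_t$ cannot degenerate to a point, forcing long-time existence and smooth convergence to a simple closed geodesic; on $\mathcal{S}^2$ these are exactly the equators, giving (d).  For (d) $\Rightarrow$ (c), smooth convergence of $\gamma_t$ to an equator forces $A(t)\to 2\pi$ as $t\to\infty$, which together with the exponential identity $A(t)-2\pi=(A(0)-2\pi)e^t$ forces $A(0)=2\pi$ and hence $A(t)\equiv 2\pi$.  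The deepest step, and the one I would expect to lean on most heavily, is the invocation of Grayson's theorem to rule out singularity formation and to guarantee convergence to a closed geodesic rather than merely subsequential convergence.
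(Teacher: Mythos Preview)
Your proof is correct and covers the same equivalences, but the route differs from the paper's in one interesting respect. The paper only proves the cycle (a)$\Rightarrow$(b), cites \cite{CasLerMiq} for (b)$\Leftrightarrow$(c) and (b)$\Leftrightarrow$(d), and then closes the loop by proving (d)$\Rightarrow$(a) directly: it observes that once the enclosed area is $2\pi$ for all $t$, Gauss--Bonnet gives $\int_{\gamma_t}\kappa_t=0$, so the velocity field $\kappa_t N$ is $J\nabla f_t$ for some primitive $f_t$ of $\kappa_t$ along $\gamma_t$, and hence the curve shortening flow itself is a Hamiltonian isotopy from the equator back to $\gamma_0$. In other words, the paper exhibits the flow as the required Hamiltonian isotopy, whereas you obtain (b)$\Rightarrow$(a) abstractly via a Moser argument and the vanishing of the flux homomorphism on $\mathcal{S}^2$. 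Both are valid; the paper's argument is more explicit and avoids invoking Moser, while your approach makes the link (a)$\Leftrightarrow$(b) independent of the flow.

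Your treatment of (b)$\Leftrightarrow$(c) and (c)$\Leftrightarrow$(d) is more self-contained than the paper's, which simply cites \cite{CasLerMiq}: your ODE $A'(t)=A(t)-2\pi$ is exactly what underlies their Lemma 3.2, and your appeal to Grayson's dichotomy for curve shortening on surfaces (combined with the area obstruction to collapsing) is essentially the content of their Corollary 3.3. The one point to flag is that Grayson's theorem in full generality gives subsequential convergence to a closed geodesic; full convergence is known on $\mathcal{S}^2$ but you should be aware it requires a bit more than the bare dichotomy statement.
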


\begin{proof}
By definition, Hamiltonian isotopies in $\mathcal{S}^2$ preserve area (as the symplectic form on $\mathcal{S}^2$ is the area form), so if $\gamma_0$ is Hamiltonian isotopic to an equator, it divides $\mathcal{S}^2$ into two regions of equal area.  Therefore, (a) implies (b).

By \cite[Lemma 3.2]{CasLerMiq}, $\gamma_0$ divides $\mathcal{S}^2$ into two regions of equal area if and only if $\gamma_t$ does for each $t$.  Therefore, (b) is equivalent to (c).   Moreover, the proof of \cite[Corollary 3.3]{CasLerMiq} states that (b) is equivalent to (d).
Therefore, we need only show that $\gamma_t$ is a Hamiltonian isotopy to show that (d) implies (a) and thus complete the proof.

At every time $t$, we know that $\gamma_t$ is a simple closed curve and divides $\mathcal{S}^2$ into two regions of equal area $2\pi$ (using here that $\mathcal{S}^2$ has curvature $1$ and so its area is $4\pi$).  Choose one of these regions $\mathcal{U}_t$ for each $t$.  By  Gauss--Bonnet,
\begin{equation*}\int_{\mathcal{U}_t} \, \rd A + \int_{\gamma_t} \kappa_t = 2\pi \chi(\mathcal{U}_t),
\end{equation*}
where $\kappa_t$ is the curvature of $\gamma_t$. Since the area of $\mathcal{U}_t$ is $2\pi$ and the Euler characteristic $\chi(\mathcal{U}_t)=1$ we have that
\begin{equation*}
\int_{\gamma_t}\kappa_t=0
\end{equation*}
for all $t$.  Hence, $\kappa_t$ is exact for each $t$, and thus the curve shortening flow 
$\gamma_t$ is indeed a Hamiltonian isotopy as desired.
\end{proof}

We now make the following observation.

\begin{lem}\label{lem:vol.split}
A compact embedded orientable Lagrangian $L'$ in $\mathcal{S}^3(2)$ is Hamiltonian isotopic to the Clifford torus $L$ if and only if $L'$ divides $\mathcal{S}^3(2)$ into two regions of equal volume. 
\end{lem}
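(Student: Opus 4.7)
The plan is to use the Hopf fibration to reduce the equivalence to the corresponding statement for simple closed curves on $\mathcal{S}^2$, and then invoke the equivalence (a)$\Leftrightarrow$(b) of Theorem \ref{thm:curve.short.s2}.

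First I would invoke the classification in \cite{CasLerMiq}, which shows that any compact embedded orientable Lagrangian $L'$ in $\mathcal{S}^3(2)$ is a torus foliated by Hopf circles (the orbits of the diagonal $\mathcal{S}^1$-action $e^{i\alpha}\cdot(z_1,z_2)=(e^{i\alpha}z_1,e^{i\alpha}z_2)$). Letting $\pi\colon\mathcal{S}^3(2)\to\mathcal{S}^2$ denote the associated Hopf projection, this gives $L'=\pi^{-1}(\gamma')$ and $L=\pi^{-1}(\gamma_L)$, where $\gamma'\subseteq\mathcal{S}^2$ is a simple closed curve and $\gamma_L$ is an equator.

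Next, since all Hopf fibres have the same length, a straightforward co-area argument shows that the two components of $\mathcal{S}^3(2)\setminus L'$ have volumes proportional, with the same proportionality constant, to the areas of the two components of $\mathcal{S}^2\setminus\gamma'$. Consequently $L'$ bisects the volume of $\mathcal{S}^3(2)$ if and only if $\gamma'$ bisects the area of $\mathcal{S}^2$.

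It then remains to show that $L'$ is Hamiltonian isotopic to $L$ in $\bC^2$ if and only if $\gamma'$ is Hamiltonian isotopic to $\gamma_L$ in $\mathcal{S}^2$. The key observation is that the symplectic form on $\mathcal{S}^2$ arises as the Marsden--Weinstein reduction of $(\bC^2,\omega)$ at the level set $\mathcal{S}^3(2)$ of the moment map $(z_1,z_2)\mapsto\tfrac{1}{2}(|z_1|^2+|z_2|^2)$. Hence, any smooth function $h$ on $\mathcal{S}^2$ pulls back to an $\mathcal{S}^1$-invariant function $h\circ\pi$ on $\mathcal{S}^3(2)$, whose Hamiltonian flow preserves $\mathcal{S}^3(2)$ and descends via $\pi$ to the Hamiltonian flow of $h$ on $\mathcal{S}^2$. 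Thus any Hamiltonian isotopy from $\gamma_L$ to $\gamma'$ on $\mathcal{S}^2$ lifts to an $\mathcal{S}^1$-equivariant Hamiltonian isotopy from $L$ to $L'$ in $\bC^2$. Conversely, starting from a Hamiltonian isotopy from $L$ to $L'$ in $\bC^2$, I would average the generating time-dependent Hamiltonian over the Hopf $\mathcal{S}^1$-action; since the Hopf orbits through points of $L$ (resp.\ $L'$) lie entirely inside $L$ (resp.\ $L'$), the averaged Hamiltonian still generates an isotopy carrying $L$ onto $L'$, and this averaged isotopy is $\mathcal{S}^1$-equivariant, so it descends to a Hamiltonian isotopy from $\gamma_L$ to $\gamma'$ on $\mathcal{S}^2$.

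The hard part will be making the equivariant averaging rigorous and verifying that the averaged flow still preserves $\mathcal{S}^3(2)$ while connecting the prescribed endpoints. Once this correspondence is established, the lemma follows by combining the three steps with the equivalence (a)$\Leftrightarrow$(b) of Theorem \ref{thm:curve.short.s2}.
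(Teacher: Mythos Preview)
Your overall strategy coincides with the paper's: reduce via the Hopf fibration to curves on $\mathcal{S}^2$, match volumes with areas by co-area, and invoke Theorem~\ref{thm:curve.short.s2}. The lifting direction (Hamiltonian isotopy on $\mathcal{S}^2$ lifts to one in $\bC^2$) is fine and matches the paper.

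The gap is in your descending step. Averaging the generating Hamiltonian $H_t$ over the Hopf $\mathcal{S}^1$-action does produce an $\mathcal{S}^1$-invariant Hamiltonian $\bar H_t$, but there is no reason its time-$1$ map should still carry $L$ to $L'$. Your stated justification---that $L$ and $L'$ are each unions of Hopf orbits---only tells you that for each fixed $g\in\mathcal{S}^1$ the conjugated isotopy $g\circ\phi_t\circ g^{-1}$ sends $L$ to $L'$; it says nothing about the flow of the \emph{averaged} Hamiltonian, which is not obtained by averaging the diffeomorphisms. This is precisely the point you flag as ``the hard part'', and it does not go through as written. You also need the averaged isotopy to preserve $\mathcal{S}^3(2)$, which is a separate (and equally unjustified) claim.

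The paper sidesteps this entirely. It takes the Hamiltonian isotopy to be through Lagrangian tori $L_s\subset\mathcal{S}^3(2)$, with generating functions $\tilde f_s$ defined on $L_s$ itself (so the normal velocity is $J\nabla\tilde f_s$). Since the isotopy stays in $\mathcal{S}^3(2)$, the velocity $J\nabla\tilde f_s$ is tangent to $\mathcal{S}^3(2)$, hence orthogonal to the unit normal $N$; applying $J$ gives $\langle\nabla\tilde f_s, JN\rangle=0$. But $JN$ is the Hopf direction, so $\tilde f_s$ is \emph{already} constant on fibres and descends directly to a Hamiltonian on $\gamma_s$---no averaging needed. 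This gives a clean one-to-one correspondence between Hamiltonian isotopies of Lagrangian tori in $\mathcal{S}^3(2)$ and of simple closed curves in $\mathcal{S}^2$.
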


\begin{proof}
	In \cite[Proposition 2.1]{CasLerMiq}, it is shown that any embedded Lagrangian torus in $\mathcal S^3$ descends via the Hopf fibration $\pi: \mathcal S^3 \to \mathcal S^2$ to a simple closed curve on $\mathcal S^2$. We review and extend this argument to show that we can translate the stated claim to one involving curves on $\mathcal{S}^2$. 

Let $L' = L_0$ and $L = L_1$ be Hamiltonian isotopic through Lagrangian tori $L_s \subset \mathcal{S}^3(2)$, $s\in [0,1]$, and let 
$\tilde{f}_s:L_s\to\bR$ be smooth Hamiltonian functions generating the isotopy.  Let $N$ denote the normal vector field to $\mathcal{S}^3(2) \subset \bC^2$. Since $L_s$ is Lagrangian, $JN$ is tangent to $L_s$. The integral curves of $JN$ are the Hopf circles, so $L_s$ must be foliated by such circles, and hence $\pi(L_s)=\gamma_s\subseteq\mathcal{S}^2$ is a closed curve. We also see that $L_s$ is embedded if and only if $\gamma_s$ is simple.  Furthermore, as $J \nabla \tilde f_s$ is tangent to $\mathcal S^3(2)$, we have that
\begin{equation*}
\langle \nabla \tilde f_s, JN\rangle = -\langle J\nabla\tilde f_s,N\rangle = 0
\end{equation*}
and so $\tilde f_s$ is constant along the Hopf fibres. Thus, the $\tilde f_s$ descend to Hamiltonian functions $f_s : \gamma_s \to \bR$.  Conversely, given Hamiltonian  functions $f_s$ generating an isotopy  $\gamma_s$, we can lift $\gamma_s$ to a Hamiltonian isotopy $L_s$ by extending each $f_s$ to a function $\tilde f_s$ constant along each Hopf fibre. In conclusion, we have a one-to-one correspondence between Hamiltonian isotopies of Lagrangian tori in $\mathcal S^3(2)$ and of closed curves in $\mathcal S^2$.

Moreover, one may easily see, as in \cite{CasLerMiq}, that given any embedded Lagrangian torus $L'$ in $\mathcal{S}^3(2)$, the ratio of the volumes of the two regions of $\mathcal{S}^3(2)$ determined by $L'$ is equal to the ratio of the areas of the two regions of $\mathcal{S}^2$ determined by the simple closed curve $\pi(L')$.  

The result then follows from Theorem \ref{thm:curve.short.s2}.
\end{proof}

With this result in hand, we can re-cast the main results of \cite{CasLerMiq} as follows, which thus shows that Theorem \ref{thm:flow.stability} is a special case of their work.

\begin{thm}\label{thm:Hamiso.CLM}
Let $L'$ be a compact embedded orientable Lagrangian in $\mathcal{S}^3(2)$.
\begin{itemize}
\item[(a)] If $L'$ is Hamiltonian isotopic to the Clifford torus $L$, then Lagrangian mean curvature flow starting at $L'$ has a first finite-time Type I singularity at the origin, and the rescaled flow converges to $L$, up to a unitary transformation.\\[-2ex]
\item[(b)] If $L'$ is not Hamiltonian isotopic to $L$, then Lagrangian mean curvature flow starting at $L'$ has a first finite-time Type I singularity along a circle, and the rescaled flow converges to a cylinder $\mathcal{S}^1\times\bR$ in some $\bR^3\subseteq\bC^2$.
\end{itemize}
\end{thm}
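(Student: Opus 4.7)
The approach is to exploit the identification, already invoked in Lemma \ref{lem:vol.split}, between embedded Lagrangian tori in $\mathcal{S}^3(2)$ and simple closed curves in $\mathcal{S}^2$ via the Hopf fibration $\pi$, and to show that under this identification Lagrangian mean curvature flow of $L'$ corresponds (after accounting for the shrinking ambient sphere) to curve shortening flow of $\gamma_0=\pi(L')$ in $\mathcal{S}^2$. The main results of \cite{CasLerMiq} establish precisely this correspondence: by the maximum principle, $L'_t$ remains inside the shrinking barrier $\mathcal{S}^3(2\sqrt{1-t})$, it stays Hopf-invariant, and it thus descends to a curve shortening flow $\gamma_t$ in a copy of $\mathcal{S}^2$. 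I will then feed this correspondence into Theorem \ref{thm:curve.short.s2} and Lemma \ref{lem:vol.split} in each of the two cases.

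For part (a), $L'$ is Hamiltonian isotopic to $L$, so Lemma \ref{lem:vol.split} gives that $\gamma_0$ bisects $\mathcal{S}^2$ by area. Theorem \ref{thm:curve.short.s2}(d) then yields that $\gamma_t$ exists for all time and converges to an equator of $\mathcal{S}^2$. Lifting via $\pi$, we deduce that the rescaled Lagrangian mean curvature flow starting at $L'$ converges to the Hopf lift of this equator, which is a Clifford torus; by $\UU(2)$-equivariance, different equators in $\mathcal{S}^2$ correspond to unitary images of $L$, which accounts for the unitary ambiguity. Since the rescaled flow has smooth convergence and the ambient sphere $\mathcal{S}^3(2\sqrt{1-t})$ collapses to the origin at $t=1$, we obtain a first finite-time Type I singularity at the origin at time $1$, as claimed.

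For part (b), Lemma \ref{lem:vol.split} implies that $\gamma_0$ does not bisect $\mathcal{S}^2$. By Theorem \ref{thm:curve.short.s2}, the curve shortening flow $\gamma_t$ cannot exist for all time and converge to an equator; by the classical curve shortening theory of Gage in $\mathcal{S}^2$ (cf.~\cite{Gage,CasLerMiq}), $\gamma_t$ must then shrink to a round point at some finite time $T$. Lifting back via $\pi$, this point corresponds to a Hopf fibre in $\mathcal{S}^3(2\sqrt{1-T})$ -- a round circle in a $2$-plane of $\bC^2$ -- onto which $L'_t$ collapses, giving a Type I singularity along this circle. A Type I rescaling about any point of this singular circle factors as the product of the round-point blow-up of $\gamma_t$ in $\mathcal{S}^2$ (a line $\bR$) with the unchanged Hopf fibre direction (a factor $\mathcal{S}^1$), producing the cylinder $\mathcal{S}^1\times\bR$ inside a $3$-plane $\bR^3\subset\bC^2$.

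The main obstacle is the precise matching of the singularity models in part (b): one must verify that the Hopf-equivariance of LMCF is rigid enough at the singular time to identify the Type I rescaling as a genuine metric product of a shrinking circle on $\mathcal{S}^2$ with the Hopf fibre direction, and that no further collapse or loss of regularity occurs along the Hopf direction. Both features are controlled by the explicit Hopf-invariant parametrisations and estimates developed in \cite{CasLerMiq}, from which the statement follows.
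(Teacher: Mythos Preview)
Your proposal is correct and follows essentially the same route as the paper: the paper does not give a formal proof of this theorem but simply presents it as a re-casting of the main results of \cite{CasLerMiq}, using Lemma \ref{lem:vol.split} (and hence Theorem \ref{thm:curve.short.s2}) to translate between the Hamiltonian isotopy condition and the equal-volume/equal-area condition. Your write-up makes this translation explicit, and the paper additionally remarks that the rescaling used in \cite{CasLerMiq} for case (b) differs from the standard one but is equivalent to it.
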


We should note that the rescaling in (b) considered in \cite{CasLerMiq} is not the standard rescaling, but it is equivalent to the standard one and so the result holds as stated.  It is perhaps interesting to observe that 
the entropy of the cylinder is
\begin{equation*}
\lambda(\mathcal{S}^1\times\bR)=\sqrt{\frac{2\pi}{e}}=1.520\ldots
\end{equation*}
which is less than $2$ (the entropy of two planes) and less than the entropy of the Clifford torus.

We also observe the following corollary, which is also known by Lemma \ref{lem:vol.split} and the study of the isoperimetric problem in  
$3$-dimensional space forms in \cite{RitoreRos}, for example.

\begin{cor}
The Clifford torus $L$ is the unique volume (and thus entropy) minimiser amongst Lagrangians in $\mathcal{S}^3(2)$ Hamiltonian isotopic to $L$, up to unitary transformations.
\end{cor}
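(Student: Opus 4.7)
The plan is to combine Lemma~\ref{lem:vol.split} with the classical isoperimetric inequality on $\mathcal{S}^2$, transported via the Hopf fibration using the correspondence already established in the proof of that lemma. By Lemma~\ref{lem:vol.split}, every compact embedded orientable Lagrangian $L'\subseteq \mathcal{S}^3(2)$ Hamiltonian isotopic to $L$ must split $\mathcal{S}^3(2)$ into two regions of equal volume. Any such $L'$ is foliated by Hopf fibres and so projects under the Hopf map $\pi\colon \mathcal{S}^3(2)\to\mathcal{S}^2$ to a simple closed curve $\gamma'=\pi(L')$; since $L'$ is a principal $\mathcal{S}^1$-bundle over $\gamma'$, the area $\Vol(L')$ is a fixed positive multiple of the arc length of $\gamma'$, and the half-volume condition on $L'$ translates to $\gamma'$ enclosing exactly half the area of $\mathcal{S}^2$.

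Next, I would apply the classical isoperimetric inequality on $\mathcal{S}^2$: amongst simple closed curves bounding a region of fixed area, the minimum of length is uniquely achieved, up to rotation, by geodesic circles. In the half-area case these are the equators, and lifting via $\pi$ the equators correspond to Clifford tori in $\mathcal{S}^3(2)$. The $\SO(3)$-action on $\mathcal{S}^2$ is induced by the $\mathrm{SU}(2)\subseteq\UU(2)$-action on $\mathcal{S}^3(2)$, so this establishes that $L$ is the unique volume minimiser in its Hamiltonian isotopy class up to unitary transformations. Alternatively one can invoke \cite{RitoreRos} directly, applied to the restricted class of Hopf-invariant surfaces in $\mathcal{S}^3(2)$.

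For the entropy statement, I would observe that $L'\subseteq\mathcal{S}^3(2)$ forces $|X'|^2=4$ pointwise, and hence $F(X',0,1)=\Vol(L')/(4\pi e)$; moreover $\lambda(X')\geq F(X',0,1)$ by definition, while $\lambda(X)=F(X,0,1)=\Vol(L)/(4\pi e)$ by Lemma~\ref{lem:L.entropy}. The volume minimisation just established therefore yields $\lambda(X')\geq\lambda(X)$, and equality forces $\Vol(L')=\Vol(L)$ and hence $L'=A\cdot L$ for some $A\in\UU(2)$ by uniqueness in the volume case. The main (and essentially only) obstacle is invoking the correct uniqueness-form of the isoperimetric inequality on $\mathcal{S}^2$, which is entirely classical; the remaining steps are bookkeeping consolidating material already worked out in the paper.
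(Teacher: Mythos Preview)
Your argument is correct and is a genuinely different route from the paper's own proof. The paper proceeds dynamically: it invokes Theorem~\ref{thm:Hamiso.CLM} to run rescaled Lagrangian mean curvature flow from $L'$ until it converges to a unitary rotate of $L$, and then reads off $\lambda(X')\geq\lambda(X)$ (hence $\Vol(L')\geq\Vol(L)$) from monotonicity of the entropy along the flow, with equality forcing the flow to be self-similar. Your argument is static: you reduce to curves on $\mathcal{S}^2$ via the Hopf fibration (exactly the correspondence set up in the proof of Lemma~\ref{lem:vol.split}), and then appeal to the classical isoperimetric inequality on the $2$-sphere. The paper in fact anticipates your route in the sentence immediately preceding the Corollary, pointing to Lemma~\ref{lem:vol.split} together with \cite{RitoreRos} as an alternative.

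What each approach buys: your reduction is more elementary and self-contained, requiring no flow machinery beyond what is already in Lemma~\ref{lem:vol.split}; the only external input is the spherical isoperimetric inequality with its uniqueness statement, which is indeed classical. The paper's approach, on the other hand, illustrates a general principle (entropy monotonicity plus convergence implies minimisation) that applies without the Hopf-invariance that makes your dimension reduction possible. Your treatment of the entropy case via $\lambda(X')\geq F(X',0,1)=\Vol(L')/(4\pi e)$ and the identification of the equality case is clean and correct.
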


\begin{proof}
By Theorem \ref{thm:Hamiso.CLM} any Lagrangian $L'$ Hamiltonian isotopic to $L$ will under the rescaled mean curvature flow converge to $L$, up to some unitary transformation.  Since the entropy is non-increasing along the flow we know that $\lambda(X')\geq\lambda(X)$ (and hence $\Vol(L')\geq \Vol(L)$).  We also know that the entropy is constant if and only if the flow is self-similar, but then $L'$ must be $L$ up to a unitary transformation.
\end{proof}

\section{Local uniqueness}\label{sec:rigidity}

We now wish to move away from the purely Lagrangian setting and discuss the local uniqueness of the Clifford torus as a self-shrinker for mean curvature flow.  This is not straightforward since there is a kernel for the linearisation for the self-shrinker equation which is larger than we would expect: i.e.~it does not just consist of infinitesimal rigid motions.  Therefore, we must show the remaining infinitesimal deformations are genuinely obstructed  to deduce local uniqueness.

\vspace{-4pt}
\subsection{The self-shrinker equation}

We start by observing that any compact embedded submanifold which is a graph over the Clifford torus can be written as the image of an immersion
\begin{equation*}
X_V=X+V:\mathcal{S}^1\times\mathcal{S}^1\to\bC^2
\end{equation*}
where $V$ is a normal vector field on the Clifford torus $L$, which is the image of $X$.  Moreover, the graph of $V$ must lie in a $C^1$-neighbourhood $\mathcal{U}$ of the zero section in the normal bundle $NL$ (where we omit the inclusion of the pullback of this bundle to $\mathcal{S}^1\times\mathcal{S}^1$ for simplicity).  We therefore denote the image of $X_V$ by $L_V$ for $V\in C^1(\mathcal{U})$, where the notation means $C^1$ sections of $NL$ whose graph lies in $\mathcal{U}$, and we use similar notation for sections of other Banach spaces.  We also let $\mathcal{T}$ denote the tubular neighbourhood of $L$ given by applying the exponential map to $\mathcal{U}$.

We know that $L_V$ is a self-shrinker with space-time centre at $(x_0,t_0) = (0,1) \in \bC^2\times\bR^+$  if and only if \eqref{eq:shrinker} is satisfied.  We can equivalently say that $C^{1}$-close self-shrinkers $L_V$ are characterised as zeros of the functional 
\begin{equation}\label{eq:S}
\begin{split}
&\mathcal{S}:C^{2,\alpha}(\mathcal{U})\to C^{0,\alpha}(NL)\\
&\mathcal{S}(V)= -X_V^*\left(H(X+V)+\frac{(X+V)^{\perp_V}}{2}\right)^{\perp}.
\end{split}
\end{equation}
Here, $H(X+V)$ is the mean curvature vector of $L_V$ and ${}^{\perp_V}$, ${}^{\perp}$ denote the orthogonal projections on $NL_V$ and $NL$ (again abusing notation and omitting pull backs).  Since $L_V$ is a normal graph over $L$, the projection of the vector in brackets in \eqref{eq:S} onto $NL$ will vanish if and only if the vector vanishes. 
Moreover, we need only consider $V$ in $C^{2,\alpha}$ since if the self-shrinker equation is satisfied then $V$ will necessarily be smooth.

\vspace{-4pt}
\subsection{Rotations}
We know that the action of rotations preserves the condition \eqref{eq:shrinker}.  

To deal with this, recall the normal vector fields $Y_j^{\perp}$ on $L$ for $j=1,2,3,4$ given in \eqref{eq:Y1perp}, \eqref{eq:Y2perp} and \eqref{eq:Y3perp.Y4perp}.  Define
\begin{equation}\label{eq:piY}
\pi_{\mathcal{Y}}:C^{2,\alpha}(NL)\to \mathcal{Y}=\Span\{Y_1^{\perp},Y_2^{\perp},Y_3^{\perp},Y_4^{\perp}\}
\end{equation}
to be $L^2$-orthogonal projection.

\begin{lem}\label{lem:slice}
Making $\mathcal{U}$ smaller if necessary, for any sufficiently $C^1$-close submanifold $L'$ to $L$ there exists $A\in \SO(4)$ and $V\in C^1(\mathcal{U})\cap \Ker\pi_{\mathcal{Y}}$ such that $A\cdot L'=L_{V}$.  Moreover, $A$ is unique up to the action of $\UU(1)^2$ preserving $L$.
\end{lem}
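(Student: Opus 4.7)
The plan is to apply the inverse function theorem, using that the subspace $\mathcal{Y}$ is tailored exactly so as to provide a slice for the $\SO(4)$-action transverse to the stabilizer $\UU(1)^2$ of $L$. First I would fix a $4$-dimensional subspace $\mathcal{H}\subset\mathfrak{so}(4)$ complementary to the stabilizer Lie algebra $\mathfrak{u}(1)^2$: take $\mathcal{H}$ to be spanned by the four elements of $\mathfrak{so}(4)$ whose induced vector fields on $\bC^2$ restrict to $Y_1,Y_2,Y_3,Y_4$ along $L$; this is a genuine complement since $Y_1^\perp,\ldots,Y_4^\perp$ are linearly independent by Lemma \ref{lem:V.1.espace.2}. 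Then define
\begin{equation*}
\Psi:\mathcal{H}\times\bigl(C^{2,\alpha}(\mathcal{U})\cap\Ker\pi_{\mathcal{Y}}\bigr)\to C^{2,\alpha}(NL),\qquad (\xi,V)\mapsto W,
\end{equation*}
where $W$ is the unique normal graph function over $L$ satisfying $L_W=\exp(-\xi)\cdot L_V$; this is well defined on a neighbourhood of $(0,0)$ since rotations close to the identity preserve the tubular neighbourhood $\mathcal{T}$.

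The next step is to verify that $d\Psi_{(0,0)}$ is a Banach-space isomorphism. Varying $V$ alone contributes the identity, and varying $\xi\in\mathcal{H}$ alone contributes $(\eta_L)^\perp\in\mathcal{Y}$ by construction of $\mathcal{H}$, so
\begin{equation*}
d\Psi_{(0,0)}(\eta,U)=(\eta_L)^\perp+U\in\mathcal{Y}\oplus\Ker\pi_{\mathcal{Y}}=C^{2,\alpha}(NL),
\end{equation*}
which is an isomorphism because $\mathcal{H}\to\mathcal{Y}$, $\eta\mapsto(\eta_L)^\perp$, is one by our choice of $\mathcal{H}$. The inverse function theorem then yields a smooth local inverse; after shrinking $\mathcal{U}$ accordingly, for any $L'=L_W$ sufficiently $C^1$-close to $L$ there is a unique small pair $(\xi,V)$ with $\exp(\xi)\cdot L'=L_V$ and $V\in\Ker\pi_{\mathcal{Y}}$. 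Setting $A=\exp(\xi)$ proves existence.

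For the uniqueness clause, suppose $A'\cdot L'=L_{V'}$ is a second such decomposition with $V'\in\Ker\pi_{\mathcal{Y}}$ small. Then $C=A'A^{-1}$ satisfies $C\cdot L_V=L_{V'}$, so $C$ is forced into a neighbourhood of the stabilizer $\UU(1)^2$ (by compactness of $\SO(4)$ and continuity of $C\mapsto C\cdot L$), and hence decomposes uniquely as $C=B\exp(\eta)$ with $B\in\UU(1)^2$ and $\eta\in\mathcal{H}$ small. Rearranging yields $\exp(\eta)\cdot L_V=B^{-1}\cdot L_{V'}=L_{V''}$, where $V''$ is the pushforward of $V'$ by $B^{-1}$. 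Crucially, $V''$ still lies in $\Ker\pi_{\mathcal{Y}}$ because the stabilizer $\UU(1)^2$ preserves the $1$-eigenspace $\mathcal{Y}$ of $\Delta_L^\perp$ (Lemma \ref{lem:V.1.espace}). Thus both $(0,V'')$ and $(-\eta,V)$ are $\Psi$-preimages of the same element of $C^{2,\alpha}(NL)$, so inverse-function-theorem uniqueness forces $\eta=0$, i.e.\ $C=B\in\UU(1)^2$. The main technical subtlety here, I expect, is precisely the $\UU(1)^2$-equivariance of the gauge slice $\Ker\pi_{\mathcal{Y}}$; with that in hand, everything else is a routine implicit-function argument.
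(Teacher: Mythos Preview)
Your proof is correct and is precisely a hands-on unpacking of the slice theorem for compact Lie group actions, which is all the paper invokes (its entire proof reads: ``This is a direct application of the slice theorem for Lie group actions by diffeomorphisms''). Two harmless slips: with your convention $L_W=\exp(-\xi)\cdot L_V$ the differential is $d\Psi_{(0,0)}(\eta,U)=-(\eta_L)^\perp+U$ rather than $+(\eta_L)^\perp+U$, and $\mathcal{Y}$ is only a $4$-dimensional half of the $8$-dimensional $1$-eigenspace, so its $\UU(1)^2$-invariance follows not from Lemma~\ref{lem:V.1.espace} alone but from the explicit formulas \eqref{eq:Y1perp}, \eqref{eq:Y2perp}, \eqref{eq:Y3perp.Y4perp}, which are visibly permuted among themselves by the translations $(\theta_1,\theta_2)\mapsto(\theta_1+\phi_1,\theta_2+\phi_2)$.
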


\begin{proof}
This is a direct application of the slice theorem for Lie group actions by diffeomorphisms.
\end{proof}

This yields a description of self-shrinkers satisfying \eqref{eq:shrinker} which are close to $L$, modulo the action of rotations.

\begin{lem}\label{lem:shrinker}
Up to the action of rotations, sufficiently $C^1$-close self-shrinkers to $L$ are uniquely determined by zeros of the functional
\begin{equation}\label{eq:S0}
\begin{split}
&\mathcal{S}_0:C^{2,\alpha}(\mathcal{U})\cap\Ker\pi_{\mathcal{Y}}\to C^{0,\alpha}(NL)\\
&\mathcal{S}_0(V)=-X_V^*\left(H(X+V)+\frac{(X+V)^{\perp_V}}{2}\right)^{\perp},
\end{split}
\end{equation}
where $\pi_{\mathcal{Y}}$ is given in \eqref{eq:piY}.  The  linearisation of $\mathcal{S}_0$ at $0$ is given by
\begin{equation}\label{eq:S0.linear}
\mathcal{L}_{0}=\Delta_L^{\perp}-1:\Ker\pi_{\mathcal{Y}}\to C^{0,\alpha}(NL).
\end{equation}
Hence,
\begin{equation*}
\mathcal{S}_0(V)=(\Delta_L^{\perp}-1)V+\mathcal{Q}_0(V),
\end{equation*}
for some smooth functional $\mathcal{Q}_0$ whose value and first derivatives at $0$ vanish.  Moreover, the kernel of $\mathcal{L}_0$ is given by
\begin{equation*}
\mathcal{K}=\Ker\mathcal{L}_{0}=\Span\{V_A^{\perp},V_B^{\perp},V_C^{\perp}, V_D^{\perp}\},
\end{equation*}
using the notation of \eqref{eq:VAperp}, \eqref{eq:VBperp} and \eqref{eq:VCperp.VDperp}.
\end{lem}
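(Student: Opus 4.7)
My plan is to verify the lemma in four stages: the characterization of nearby shrinkers as zeros of $\mathcal{S}_0$, the smoothness and Taylor expansion of $\mathcal{S}_0$, the identification of its linearization $\mathcal{L}_0$, and the explicit kernel $\mathcal{K}$.

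First I would apply Lemma \ref{lem:slice}: any $L'$ sufficiently $C^1$-close to $L$ can be rotated by some $A\in\SO(4)$ (unique modulo the $\UU(1)^2$ stabilizer of $L$) into the form $L_V$ with $V\in C^1(\mathcal{U})\cap\Ker\pi_{\mathcal{Y}}$. Since Lemma \ref{lem:entropy}(a) says rotations preserve the shrinker condition \eqref{eq:shrinker}, $L'$ is a self-shrinker precisely when $L_V$ is, which is equivalent to $H(X+V)+(X+V)^{\perp_V}/2=0$ on $L_V$. The orthogonal projection $NL_V\to NL$ is an isomorphism of bundles for $V$ sufficiently $C^1$-small, so the vanishing of this normal vector is equivalent to $\mathcal{S}_0(V)=0$. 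Elliptic regularity for the quasilinear shrinker equation then upgrades any $C^{2,\alpha}$ solution automatically to smoothness, justifying the choice of H\"older space.

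Next I would observe that $\mathcal{S}_0$ is smooth between the stated Banach spaces, being assembled from smooth functionals of $V$ (the induced metric, the second fundamental form of $L_V$, and the orthogonal projection $NL_V\to NL$), which automatically yields the Taylor expansion $\mathcal{S}_0(V)=\mathcal{L}_0 V+\mathcal{Q}_0(V)$ with $\mathcal{Q}_0$ vanishing to second order at $V=0$. The hard part is then identifying $\mathcal{L}_0$ without redoing a direct linearization of mean curvature and the normal-projection terms; instead I would extract $\mathcal{L}_0$ from Lemma \ref{lem:L.secondvar}. At $L$, the first variation of $F$ in a normal direction $V$ equals (up to an overall constant) the Gaussian-weighted $L^2$-pairing of $V$ with $H+X^{\perp}/2$, which vanishes identically on $L$; differentiating once more gives exactly the bilinear form associated to $\pm\mathcal{L}_0$ modulo the Gaussian weight. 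Because $|X|^2=4$ on $L$ that weight reduces to the constant $e^{-1}$, and because $X^{\top}=0$ the $\xi,\tau$ cross-terms in Lemma \ref{lem:L.secondvar} drop out, leaving $e^{-1}\langle V,(\Delta_L^{\perp}-1)V\rangle_{L^2}$; matching signs then gives $\mathcal{L}_0=\Delta_L^{\perp}-1$ as an operator on $C^{2,\alpha}(NL)$, restricted to $\Ker\pi_{\mathcal{Y}}$.

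For the kernel I would combine this formula with Lemma \ref{lem:V.1.espace.2}, which identifies $\Ker(\Delta_L^{\perp}-1)$ on the full space as the $8$-dimensional span of $\{Y_1^{\perp},Y_2^{\perp},Y_3^{\perp},Y_4^{\perp},V_A^{\perp},V_B^{\perp},V_C^{\perp},V_D^{\perp}\}$. The $Y_j^{\perp}$ are pointwise scalar multiples of $JX_1-JX_2$ whereas $V_A^{\perp},V_B^{\perp},V_C^{\perp},V_D^{\perp}$ are pointwise scalar multiples of $JX_1+JX_2$; since $\langle JX_1-JX_2,JX_1+JX_2\rangle=0$ by \eqref{eq:L.metric.0}, the two families are pointwise, hence $L^2$, orthogonal. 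Therefore $\mathcal{Y}=\Span\{Y_1^{\perp},\ldots,Y_4^{\perp}\}$, and intersecting the $1$-eigenspace with $\Ker\pi_{\mathcal{Y}}$ removes precisely these four generators, giving $\mathcal{K}=\Span\{V_A^{\perp},V_B^{\perp},V_C^{\perp},V_D^{\perp}\}$ as required.
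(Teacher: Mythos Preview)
Your proposal is correct and follows essentially the same route as the paper: both invoke Lemma~\ref{lem:slice} for the gauge-fixing, extract the linearisation by differentiating the first variation formula for $F$ and matching it against the second variation in Lemma~\ref{lem:L.secondvar}, and then read off the kernel from Lemma~\ref{lem:V.1.espace.2}. Your argument is in fact slightly more explicit than the paper's in justifying why intersecting the $1$-eigenspace with $\Ker\pi_{\mathcal{Y}}$ leaves precisely $\Span\{V_A^{\perp},V_B^{\perp},V_C^{\perp},V_D^{\perp}\}$; one small slip is that the $\xi,\tau$ cross-terms in Lemma~\ref{lem:L.secondvar} drop out simply because you are not varying $(x_0,t_0)$ (so $\xi=\tau=0$), not because $X^{\top}=0$---the latter fact is instead what kills the $\nabla^{\perp}_{X^{\top}}V$ term inside the operator $\mathcal{L}$.
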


\begin{proof}
If we are given a family $X(s)$ with 
\begin{gather*}
X(0)=X,\quad 
\frac{\partial X(s)}{\partial s}=V(s), 
\end{gather*} then the first variation formula for $F=F(X(s),0,1)$ for any $s$  is (\cite[Theorem 1]{LeeLue}):
\begin{equation}\label{eq:first.var}
\begin{split}
\frac{\partial F}{\partial s}&=-\frac{1}{4\pi}\int_{L(s)}
\left\langle V(s), H(s)+\frac{X(s)}{2}\right\rangle e^{-\frac{|X(s)|^2}{4}}\vol_{L(s)},
\end{split}
\end{equation}
where $L(s)$ is the image of $X(s)$, and the rest of the notation should be clear.
Differentiating \eqref{eq:first.var} with respect to $s$ and setting $s=0$, we can compare the result to the second variation in Lemma \ref{lem:L.secondvar} and deduce that
\begin{align*}
-\Big\langle V(0),\frac{\partial}{\partial s}\Big(H(s)+\frac{X(s)}{2}&\Big)\Big|_{s=0}\Big\rangle_{L^2}
=
\left\langle V(0),(\Delta_L^{\perp}-1)V(0)
\right\rangle_{L^2}.
\end{align*}
Thus the linearisation of $\mathcal{S}_0$ at $0$ is as given in \eqref{eq:S0.linear} and the expression for $\mathcal{S}_0(V)$ is as claimed.

The remainder of the result follows from the discussion at the start of this subsection, Lemmas \ref{lem:V.1.espace.2} and \ref{lem:slice}.
\end{proof}

For convenience later, we let 
\begin{equation*}
\pi_{\mathcal{K}}:C^{2,\alpha}(NL)\to \mathcal{K}=\Span\{V_A^{\perp},V_B^{\perp},V_C^{\perp}, V_D^{\perp}\}
\end{equation*}
denote $L^2$-orthogonal projection onto $\mathcal{K}$.

\subsection{Deformations and obstructions}\label{sec:def and ob}

In this subsection, we explain the general strategy for studying the problem of deformations and obstructions for geometric objects given by the zeros of a function. To illustrate the strategy, we describe how to determine if an element in the kernel of the linearization gives rise to a 1-parameter family of solutions, i.e.~the question of integrability.

Let $\mathcal{X}$ and $\mathcal{Y}$ be (Banach) spaces of sections of vector bundles and let $\mathcal{F}:\mathcal{X}\to\mathcal{Y}$ be a smooth map.  In our particular context, $\mathcal{X}=\mathcal{Y}=C^{2,\alpha}(NL)$ and $\mathcal{F}=\mathcal{S}_0$.  We are interested in studying $\mathcal{F}^{-1}(0)$.

We suppose that, for $V$ sufficiently near $0$ in $\mathcal{X}$, we have that
$$\mathcal{F}(V)=\sum_{n=0}^{\infty}\frac{\mathcal{F}_n(V,\ldots,V)}{n!}$$
where $\mathcal{F}_n:S^n\mathcal{X}\to\mathcal{Y}$ is the ``$n$th derivative of $\mathcal{F}$ at $0$'', which is literally true for $\mathcal{F}_1$.  We will assume now that $\mathcal{F}(0)=0$ so that $\mathcal{F}_0=0$.  This is true in our setting, since $\mathcal{F}(0)=0$ is the statement that $L$ is a self-shrinker, and we have that $\mathcal{F}_1=\mathcal{L}_0$.

To demonstrate the general strategy, we suppose that we have 1-parameter family $V(s)$ in $\mathcal{X}$ so that $\mathcal{F}(V(s))=0$ for all $s$ sufficiently small, where $V(s)$ is analytic in $s$. For the question of rigidity, we later will have to adapt the argument, assuming that we only have a sequence $V_k \rightarrow 0$ such that $\mathcal{F}(V_k)=0$, where $V_k$ cannot necessarily be written as a sequence $V(s_k)$ for an analytic function $V(s)$. 
 
Since we assume that $V(s)$ is analytic in $s$ we can write
$$V(s)=\sum_{n=0}^{\infty} \frac{V_n}{n!}s^n$$
where $V_n\in\mathcal{X}$ is the ``$n$th derivative of $V(s)$ at $s=0$''.  

We now compute, expanding in $s$,
\begin{align}
\mathcal{F}(V(s))&=s\mathcal{F}_1(V_1)+\frac{s^2}{2}\big(\mathcal{F}_1(V_2)+\mathcal{F}_2(V_1,V_1)\big)+\frac{s^3}{3!}\big(\mathcal{F}_1(V_3)+3\mathcal{F}_2(V_1,V_2)+\mathcal{F}_3(V_1,V_1,V_1)\big)\nonumber\\
&+\ldots+\frac{s^n}{n!}\big(\mathcal{F}_1(V_n)+n\mathcal{F}_2(V_1,V_{n-1})+\ldots+\mathcal{F}_n(V_1,\ldots,V_1)\big)+\ldots\label{eq:F.expansion}
\end{align}
We see immediately from the order $s$ term in \eqref{eq:F.expansion} that 
$$V_1\in\Ker\mathcal{F}_1=\mathcal{K}.$$
We assume that $V_1\neq 0$ (since this is the infinitesimal deformation we wish to extend), and assume that  $\mathcal{K}$ is closed and admits a complement $\mathcal{K}^{\perp}$ (for example, by using an inner product on $\mathcal{X}$).  This notation matches our earlier one above where $\mathcal{K}=\Ker\mathcal{L}_0$ and $\mathcal{K}^{\perp}$ is the $L^2$-orthogonal complement.  This means we can take $V_1=V_A^{\perp}$ without loss of generality, so that
$$V(s)=sV_A^{\perp}+O(s^2).$$

We see that the $n$th term in \eqref{eq:F.expansion} vanishes if and only if
$$\mathcal{F}_1(V_n)=\mathcal{E}_n(V_1,\ldots,V_{n-1}),$$
for some suitable functional $\mathcal{E}_n$ determined by $\mathcal{F}_2,\ldots,\mathcal{F}_n$.  

Therefore, we see that if $\mathcal{F}_1$ is surjective then we can inductively find $V_n$ so that the $n$th term in \eqref{eq:F.expansion} vanishes.  From here, one can recover the well-known result from the Implicit Function Theorem that if $\mathcal{F}_1$ (the linearisation of $\mathcal{F}$ at $0$) is surjective, then for all $V_1\in\mathcal{K}$ there is a curve $V(s)$, for $s$ sufficiently small, so that $V'(0)=V_1$ and $\mathcal{F}(V(s))\equiv 0$.  In other words, every infinitesimal deformation (given by elements of $\mathcal{K}$) can be integrated to path of genuine deformations.

We want to understand when elements $V_1\in\mathcal{K}$ \emph{do not} lead to a genuine deformation, i.e.~$V'(0)=V_1$ but $\mathcal{F}(V(s))\neq 0$ for all $s\neq 0$.  In this case, there must be a term in the expansion \eqref{eq:F.expansion} that \emph{cannot} vanish for \emph{any} $s\neq 0$.

To study this, we suppose that we can write
$$\mathcal{Y}=\Imm\mathcal{F}_1\oplus \mathcal{C}$$
where $\mathcal{C}$ is a closed subspace of $\mathcal{Y}$.  In our setting, $\mathcal{C}$ is the $L^2$-orthogonal complement of $\Imm\mathcal{F}_1$ and is equal to $\mathcal{K}=\Ker\mathcal{L}_0$, as $\mathcal{L}_0$ is self-adjoint.  For ease of notation we write
$$\mathcal{C}^{\perp}=\Imm\mathcal{F}_1.$$
It is worth noting that, in our setting, even though $\mathcal{K}=\mathcal{C}$, we have that $\mathcal{K}^{\perp}\neq\mathcal{C}^{\perp}$, since $\mathcal{K}^{\perp}$ is the complement of $\mathcal{K}$ in $\mathcal{X}=C^{2,\alpha}(NL)$, whereas $\mathcal{C}^{\perp}$ is the complement of $\mathcal{C}=\mathcal{K}$ in $\mathcal{Y}=C^{0,\alpha}(NL)$.  As a result, it is useful to distinguish $\mathcal{K}^{\perp}$ and $\mathcal{C}^{\perp}$.

Since $\mathcal{F}(V(s))=0$ and $V_1\in\mathcal{K}$ we see that the first non-trivial equation we need to solve is the quadratic term in \eqref{eq:F.expansion}:
\begin{equation}\label{eq:obstruction.1}
\mathcal{F}_1(V_2)+\mathcal{F}_2(V_1,V_1)=0.
\end{equation}
Hence, if $\pi_{\mathcal{C}}:\mathcal{Y}\to\mathcal{C}$ is the projection, we see that if
$$\pi_{\mathcal{C}}\big(\mathcal{F}_2(V_1,V_1)\big)\neq 0$$
then \eqref{eq:obstruction.1} can never be satisfied.  This would mean the problem is obstructed and so $V_1$ does not lead to a deformation (or other nearby solution).  In our setting, this means checking the quadratic term in $\pi_{\mathcal{K}}(\mathcal{S}_0(sV_A^{\perp}))$.  We will find in Proposition \ref{prop:VAperp.obs} that this quadratic term vanishes and hence \eqref{eq:obstruction.1} is satisfied.

If \eqref{eq:obstruction.1} can be solved, then $V_2$ can be written uniquely as 
$V_2=U_2+W_2$ where $U_2\in\mathcal{K}$ and $W_2\in\mathcal{K}^{\perp}$.  Moreover, we have that
$$\mathcal{F}_1:\mathcal{K}^{\perp}\to\mathcal{C}^{\perp}$$
is invertible and 
$$W_2=\mathcal{F}_1^{-1}\big(-\mathcal{F}_2(V_1,V_1)\big).$$

We now must move on to the third term in \eqref{eq:F.expansion} and see that we would require
\begin{equation}\label{eq:obstruction.2}
\mathcal{F}_1(V_3)+3\mathcal{F}_2(V_1,V_2)+\mathcal{F}_3(V_1,V_1,V_1)=0.
\end{equation}
Hence, we need 
\begin{equation}\label{eq:obstruction.3}
\pi_{\mathcal{C}}\big(3\mathcal{F}_2(V_1,V_2)+\mathcal{F}_3(V_1,V_1,V_1)\big)\neq 0
\end{equation}
for there to be an obstruction.  Notice that $V_2=U_2+W_2$ where $U_2\in\mathcal{K}$ is arbitrary, but $W_2$ is uniquely determined by $V_1$.  In our setting we find $\mathcal{F}_3(V_1,V_1,V_1)$ as the cubic term in $\mathcal{S}_0(sV_A^{\perp})$, and discover that 
$$\pi_{\mathcal{C}}\big(\mathcal{F}_3(V_1,V_1,V_1)\big)\neq 0.$$
Therefore, what remains to be studied is 
$$\pi_{\mathcal{C}}\big(3\mathcal{F}_2(V_1,U_2+W_2)\big).$$
In our setting this can be done explicitly (since $W_2$ is uniquely determined by $V_1=V_A^{\perp}$ and $U_2$ is a linear combination of elements of $\mathcal{K}$) and we find that it does not cancel the term given by $\pi_{\mathcal{C}}(\mathcal{F}_3(V_1,V_1,V_1))$, i.e.~\eqref{eq:obstruction.3} holds. Hence \eqref{eq:obstruction.2} \emph{cannot} be satisfied in our setting and thus the problem is obstructed.

We conclude that solving $\mathcal{F}(V(s))=0$ with $V'(0)=V_1\neq 0$ is obstructed, in our case, at the cubic term.  More generally, one may have to look at further and further terms, which is tractable since $V_n=U_n+W_n$ where $U_n\in\mathcal{K}$ and $W_n\in\mathcal{K}^{\perp}$, with $W_n$ uniquely determined by $V_1,\ldots,V_{n-1}$.  This would show that the infinitesimal deformation given by $V_1$ is obstructed.

If the problem of $\mathcal{F}(V(s))=0$ is obstructed at the $n$th term so that
$$\pi_{\mathcal{C}}\big(\mathcal{F}(V(s))\big)=\frac{s^n}{n!}\pi_{\mathcal{C}}\big(\mathcal{F}_n(V_1,\ldots,V_1)\big),$$ we see that if we write $V(s)=U(s)+W(s)$ where $U(s)\in\mathcal{K}$ and $W(s)\in\mathcal{K}^{\perp}$, then
$$\pi_{\mathcal{C}}\big(\mathcal{F}(V(s))\big)\geq \delta s^n\|U'(0)\|^n.$$
We know that $U'(0)=V_1\neq 0$, and that
$$\frac{\|U(s)-sU'(0)\|}{s}\to 0 \quad\text{as }s\to 0.$$
Hence,
$$\pi_{\mathcal{C}}\big(\mathcal{F}(V(s))\big)\geq \delta\|U(s)\|^n$$
for some $\delta>0$ and all $s$ sufficiently small. 

Therefore, $\mathcal{F}(V(s))=0$ implies that $V(s)=W(s)\in\mathcal{K}^{\perp}$.  From this point, it is straightforward to see that $\mathcal{F}(V(s))=0$ forces $V(s)=0$ for all $s$ sufficiently small.  We deduce that $0$ is the unique solution to $\mathcal{F}(V)=0$ for some small neighbourhood of $0$ in $\mathcal{X}$.

\vspace{-4pt}
\subsection{Obstructedness of the self-shrinker equation near the Clifford torus} We see that the linearisation \eqref{eq:S0.linear} of the self-shrinker operator, modulo rotations, still has a kernel, so the Clifford torus does have non-trivial infinitesimal deformations as a self-shrinker.  To show that that the Clifford torus is locally unique,  we will therefore adapt the outlined strategy and show that these infinitesimal deformations are \emph{obstructed}; that is, there are no self-shrinkers generated by them.

Before continuing on, we make some elementary observations that shall be useful later.

\begin{lem} \label{lem:bounds}  
Recall the notation of Lemma \ref{lem:shrinker}. There exists $c_{\mathcal{L}}>0$ such that for all $V\in(\Ker\mathcal{L}_0)^{\perp}\subseteq C^{2,\alpha}(NL)$ we have
\begin{equation*}
\|V\|_{C^{2,\alpha}}\leq c_{\mathcal{L}}\|\mathcal{L}_0(V)\|_{C^{0,\alpha}}.
\end{equation*}
\end{lem}

\begin{proof} This follows from follows from the ellipticity of $\mathcal{L}_0$.  
\end{proof}

We study the case of $V_A^{\perp}$ given in \eqref{eq:VAperp} in detail as the calculations for all of the other kernel elements is essentially the same.  We show that the infinitesimal deformation  $V_A^{\perp}$ is obstructed at cubic order.

\begin{prop}\label{prop:VAperp.obs}
For all $s$ sufficiently small, we have that
\begin{equation}\label{eq:S0.VAperp}
\mathcal{S}_0(sV_A^{\perp})=\frac{2s^2(1+5\cos(2\theta_1+2\theta_2))-19s^3\cos(3\theta_1+3\theta_2)}{8}(JX_1+JX_2)+\frac{s^3}{8}V_A^{\perp}+O(s^4).
\end{equation}
Thus, 
$$\pi_{\mathcal{K}}\big(\mathcal{S}_0(sV_{A}^{\perp})\big)=\frac{s^3}{8}V_A^{\perp}+O(s^4).$$
Hence, if $\mathcal{S}_{0,k}:C^{2,\alpha}(NL)\times\ldots\times C^{2,\alpha}(NL)\to C^{0,\alpha}(NL)$ is the $k$-fold symmetric map determined by
$$\mathcal{S}_{0,k}(V,\ldots,V)=\frac{\partial^k}{\partial s^k}\big(\mathcal{S}_0(sV)\big)|_{s=0}$$
then
$$\pi_{\mathcal{K}}\big(\mathcal{S}_{0,3}(V_A^{\perp},V_A^{\perp},V_A^{\perp})\big)=\frac{3}{4}V_A^{\perp}.$$
Moreover, if
$W_2\in\mathcal{K}^{\perp}$ is the unique solution to 
$$\mathcal{L}_0(W_2)=-\mathcal{S}_{0,2}(V_A^{\perp},V_A^{\perp}),$$
then 
\begin{equation}\label{eq:proj.S0.VAperp.2}
\pi_{\mathcal{K}}\big(3\mathcal{S}_{0,2}(V_A^{\perp},W_2)+\mathcal{S}_{0,3}(V_A^{\perp},V_A^{\perp},V_A^{\perp})\big)=-4V_A^{\perp}.
\end{equation}
\end{prop}

\begin{proof}
It is an elementary explicit computation to show \eqref{eq:S0.VAperp}. We give it below for completeness.

The position vector of the graph $L_{sV_A^{\perp}}$ of $sV_A^{\perp}$ over $L$ is given by
\begin{equation*}
X(s)=X+sV_A^{\perp}=\sqrt{2}\big(1+s\cos(\theta_1+\theta_2)\big)(e^{i\theta_1},e^{i\theta_2}).
\end{equation*}
We therefore have tangent vectors 
\begin{align*}
X_1(s)&=X(s)_*\left(\frac{\partial}{\partial\theta_1}\right)=\sqrt{2}\big(1+s\cos(\theta_1+\theta_2)\big)(ie^{i\theta_1},0)-\sqrt{2}s\sin(\theta_1+\theta_2)(e^{i\theta_1},e^{i\theta_2}),\\
X_2(s)&=X(s)_*\left(\frac{\partial}{\partial\theta_2}\right)=\sqrt{2}\big(1+s\cos(\theta_1+\theta_2)\big)(0,ie^{i\theta_2})-\sqrt{2}s\sin(\theta_1+\theta_2)(e^{i\theta_1},e^{i\theta_2}).
\end{align*}
Hence, the induced metric on $L_{sV_A^{\perp}}$ is
\begin{equation*}
\big(2+3s^2+4s\cos(\theta_1+\theta_2)-s^2\cos(2\theta_1+2\theta_2)\big)(\rd\theta_1^2+\rd\theta_2^2)+2s^2\big(1-\cos(2\theta_1+2\theta_2)\big)\rd\theta_1\rd\theta_2.
\end{equation*}
From this data, it is straightforward to compute $\nabla_{X_i(s)}X_j(s)$ and its projection to the normal bundle of $L_{sV_A^{\perp}}$, and hence the mean curvature $H(s)$.  Explicitly, we see that the determinant of the induced metric is
\begin{equation*}
4+16s\cos(\theta_1+\theta_2)+4s^2(5+\cos(2\theta_1+2\theta_2))+4s^3(5\cos(\theta_1+\theta_2)-\cos(3\theta_1+3\theta_2))+O(s^4),
\end{equation*}
and thus the inverse of the induced metric is
\begin{gather*}
\left(\frac{1}{2}-s\cos(\theta_1+\theta_2)+s^2\frac{5+\cos(2\theta_1+2\theta_2)}{4}-s^3\frac{\cos(\theta_1+\theta_2)+\cos(3\theta_1+3\theta_2)}{2}\right)(\rd\theta_1^2+\rd\theta_2^2)\\
+\left(s^2\frac{\cos(2\theta_1+2\theta_2)-1}{2}+s^3\big(\cos(\theta_1+\theta_2)-\cos(3\theta_1+3\theta_2)\big)\right)\rd\theta_1\rd\theta_2+O(s^4)
\end{gather*}
We also see that
\begin{align*}
\nabla_{X_1(s)}X_1(s)&=-\sqrt{2}\big((1+2s\cos(\theta_1+\theta_2))e^{i\theta_1},s\cos(\theta_1+\theta_2)e^{i\theta_2}\big)-2\sqrt{2}s\sin(\theta_1+\theta_2)(ie^{i\theta_1},0),\\
\nabla_{X_1(s)}X_2(s)&=-\sqrt{2}s\cos(\theta_1+\theta_2)(e^{i\theta_1},e^{i\theta_2})-\sqrt{2}s\sin(\theta_1+\theta_2)(ie^{i\theta_1},ie^{i\theta_2})\\
&=\nabla_{X_2(s)}X_1(s),\\
\nabla_{X_2(s)}X_2(s)&=-\sqrt{2}\big(s\cos(\theta_1+\theta_2)e^{i\theta_1},(1+2s\cos(\theta_1+\theta_2))e^{i\theta_2}\big)-2\sqrt{2}s\sin(\theta_1+\theta_2)(0,ie^{i\theta_2}).
\end{align*}
Therefore, in order to compute the normal projection, we may compute $\langle \nabla_{X_i(s)}X_j(s),X_k(s)\rangle$, for example
\begin{align*}
\langle \nabla_{X_1(s)}X_1(s),X_1(s)\rangle &
=-2s\sin(\theta_1+\theta_2)+s^2\sin(2\theta_1+2\theta_2),
\end{align*}
and thus find the mean curvature $H(s)$ from the formula
\begin{align*}
H(s)=&g^{ij}\big(\nabla_{X_i(s)}X_j(s)-g^{kl}\langle \nabla_{X_i(s)}X_j(s),X_k(s)\rangle X_l(s)\big)\\
=&-\frac{\sqrt{2}}{2}(e^{i\theta_1},e^{i\theta_2}) - \frac{\sqrt{2}s}{2}\left(\cos(\theta_1 +\theta_2) (e^{i\theta_1},e^{i\theta_2}) +2\sin(\theta_1 + \theta_2)(ie^{i\theta_1},ie^{i\theta_2})\right)
\\
&+\frac{\sqrt{2}s^2}{4}\big(5+\cos(2\theta_1 +2 \theta_2)\big)(e^{i\theta_1},e^{i\theta_2})
\\
&+\frac{\sqrt{2}s^3}{2}\bigg(-\frac{5\big(\cos(\theta_1 + \theta_2)+3\cos(3\theta_1+3\theta_2)\big)}{4} (e^{i\theta_1},e^{i\theta_2}) 
\\
&\qquad\quad+\frac{9\sin(\theta_1 +\theta_2)+\sin(3\theta_1+3\theta_2)}{2}(ie^{i\theta_1},ie^{i\theta_2})\bigg) + O(s^4).
\end{align*}
We may also compute the projection of $X(s)$ to the normal bundle of $L_{sV_A^{\perp}}$, calculating that
\begin{align*}
\frac{X(s)^{\perp}}{2}=&\frac{1}{2}\big(X(s)-g^{ij}\langle X(s),X_i(s)\rangle X_j(s)\big)\\
=&\frac{\sqrt{2}}{2}(e^{i\theta_1},e^{i\theta_2}) + \frac{\sqrt{2}s}{2}\left(\cos(\theta_1 + \theta_2) (e^{i\theta_1},e^{i\theta_2})+2\sin(\theta_1 +\theta_2)(ie^{i\theta_1},ie^{i\theta_2})\right) \\
&+\sqrt{2}s^2\big(\cos(2\theta_1 +2\theta_2)-1\big)(e^{i\theta_1},e^{i\theta_2}) \\
&+\frac{\sqrt{2}s^3}{2}\Big(\big(\cos(\theta_1 +\theta_2)-\cos(3\theta_1+3\theta_2)\big) (e^{i\theta_1},e^{i\theta_2})\\
&\qquad\quad+ 2 \big(\sin(3\theta_1+3\theta_2)-3\sin(\theta_1+\theta_2)\big) (ie^{i\theta_1},ie^{i\theta_2})\Big) + O(s^4).
\end{align*}  Hence, we have an explicit formula for $H(s)+\frac{X(s)}{2}^{\perp}$, which we can then project to the normal bundle of $L$ (after translation, which is elementary since we have written $L_{sV_A^{\perp}}$ as an exponential normal graph).  This gives us the formula \eqref{eq:S0.VAperp}.

We see that the quadratic term in \eqref{eq:S0.VAperp} is 
$$\frac{s^2}{2}\cdot\frac{(1+5\cos(2\theta_1+2\theta_2))}{2}(JX_1+JX_2)$$
and thus
$$\mathcal{S}_{0,2}(V_A^{\perp},V_A^{\perp})=\frac{(1+5\cos(2\theta_1+2\theta_2))}{2}(JX_1+JX_2).$$
Using \eqref{eq:L.Delta}, \eqref{eq:L.nDelta} and \eqref{eq:S0.linear} we know that
$$\mathcal{L}_0(f_1JX_1+f_2JX_2)=\left(\left(-\frac{1}{2}\frac{\partial^2}{\partial\theta_1^2}-\frac{1}{2}\frac{\partial^2}{\partial\theta_2^2}-1\right)f_1\right)JX_1+\left(\left(-\frac{1}{2}\frac{\partial^2}{\partial\theta_1^2}-\frac{1}{2}\frac{\partial^2}{\partial\theta_2^2}-1\right)f_2\right)JX_2.$$
Therefore, the unique element $W_2\in\mathcal{K}^{\perp}$ such that 
$$\mathcal{L}_0(W_2)=-\frac{(1+5\cos(2\theta_1+2\theta_2))}{2}(JX_1+JX_2)=-\mathcal{S}_{0,2}(V_A^{\perp},V_A^{\perp})$$
is given by
$$W_2=\frac{3-5\cos(2\theta_1+2\theta_2)}{6}(JX_1+JX_2).$$

Our task now is to compute
$$\mathcal{S}_{0,2}(V_A^{\perp},W_2).$$
To do this, it is more streamlined to consider the following general situation, where we take normal vector fields $V$ as follows:
\begin{equation}\label{eq:V.f}
V=\sqrt{2} f(\theta_1,\theta_2)(e^{i\theta_1},e^{i\theta_2}).
\end{equation}
Note that both $V_A^{\perp}$ and $W_2$ are of this form, with 
\begin{equation}\label{eq:f1f2.VAW2}
f_1=\cos(\theta_1+\theta_2)\quad\text{and}\quad f_2=\frac{5\cos(2\theta_1+2\theta_2)-3}{6}
\end{equation}
for $V_A^{\perp}$ and $W_2$ respectively.

We can then consider the graph of $L_{sV}$ over $L$, just as when we took $V=V_A^\perp$ above, which now has position vector
\begin{align*}
X(s)&=\sqrt{2}\big(1+sf\big)(e^{i\theta_1},e^{i\theta_2})
\end{align*}
We may then compute the tangent vectors to $L_{sV}$:
\begin{align*}
X_1(s)&=\sqrt{2}(1+sf)(ie^{i\theta_1},0)+\sqrt{2} s\textstyle\frac{\partial f}{\partial\theta_1}(e^{i\theta_1},e^{i\theta_2})\\
X_2(s)&=\sqrt{2}(1+sf)(0,ie^{i\theta_2})+\sqrt{2}s\textstyle\frac{\partial f}{\partial\theta_2}(e^{i\theta_1},e^{i\theta_2}).
\end{align*}
We therefore find the induced metric on $L_{sV}$:
\begin{align*}
g_{11}&=\langle X_1(s),X_1(s)\rangle =2(1+sf)^2+4s^2\big(\textstyle\frac{\partial f}{\partial\theta_1}\big)^2,\\
g_{12}&=\langle X_1(s),X_2(s)\rangle=4s^2\textstyle\frac{\partial f}{\partial\theta_1}\frac{\partial f}{\partial \theta_2},\\
g_{22}&=\langle X_2(s),X_2(s)\rangle =2(1+sf)^2+4s^2\big(\textstyle\frac{\partial f}{\partial\theta_2}\big)^2.
\end{align*}
Thus, the entries $(g^{ij})$ of the inverse of the induced metric are
\begin{align*}
g^{11}&=\frac{1}{2}-sf+\frac{s^2}{2}\big(3f^2-2\textstyle(\frac{\partial f}{\partial \theta_1})^2\big)+O(s^3)
,\\
g^{12}&=-s^2\textstyle\frac{\partial f}{\partial \theta_1}\frac{\partial f}{\partial \theta_2}+O(s^3) 
, \\
g^{22}&=\frac{1}{2}-sf+\frac{s^2}{2}\big(3f^2-2\textstyle(\frac{\partial f}{\partial \theta_2})^2\big)+O(s^3) 
.
\end{align*}
We now compute $\nabla_{X_i(s)}X_j(s)$ as follows:
\begin{align*}
\nabla_{X_1(s)}X_1(s)&=-\sqrt{2}\big(\big(1+sf-s\textstyle\frac{\partial^2f}{\partial\theta_1^2}\big)e^{i\theta_1},-s\textstyle\frac{\partial^2f}{\partial\theta_1^2} e^{i\theta_2}\big)+2\sqrt{2}s\textstyle\frac{\partial f}{\partial\theta_1}(ie^{i\theta_1},0),\\
\nabla_{X_1(s)}X_2(s)&=\sqrt{2}s\textstyle\frac{\partial^2f}{\partial\theta_1\partial\theta_2}(e^{i\theta_1},e^{i\theta_2})+\sqrt{2}s\big(\frac{\partial f}{\partial\theta_2}ie^{i\theta_1},\frac{\partial f}{\partial\theta_1}e^{i\theta_2}\big)\\
&=\nabla_{X_2(s)}X_1(s),\\
\nabla_{X_2(s)}X_2(s)&=-\sqrt{2}\big(-s\textstyle\frac{\partial^2f}{\partial\theta_2^2} e^{i\theta_1},\big(1+sf-s\textstyle\frac{\partial^2f}{\partial\theta_2^2}\big)e^{i\theta_1}\big)+2\sqrt{2}s\textstyle\frac{\partial f}{\partial\theta_2}(0,ie^{i\theta_2}).
\end{align*}
Given this, it is straightforward to calculate the 
inner products $\langle\nabla_{X_i(s)}X_j(s),X_k(s)\rangle$, such as:
\begin{align*}
\langle\nabla_{X_1(s)}X_1(s),X_1(s)\rangle&=2s\textstyle\frac{\partial f}{\partial\theta_1}+2s^2\textstyle\frac{\partial f}{\partial\theta_1}\big(f+2\frac{\partial^2f}{\partial\theta_1^2}\big)+O(s^4),\\
\langle\nabla_{X_1(s)}X_1(s),X_2(s)\rangle&=-2s\textstyle\frac{\partial f}{\partial\theta_2}-2s^2\textstyle\frac{\partial f}{\partial\theta_2}\big(f-2\frac{\partial^2f}{\partial\theta_1^2}\big)+O(s^4).
\end{align*}
From our computations so far, it is now straightforward to find the mean curvature $H(s)$ of $L_{sV}$ and project it to the normal bundle of $L$.  We see that
\begin{align*}
H(s)&=g^{ij}\big(\nabla_{X_i(s)}X_j(s)-g^{kl}\langle\nabla_{X_i(s)}X_j(s),X_k(s)\rangle X_l(s)\big)\\
&=\sqrt{2}\left(-\frac{1}{2}+\frac{s}{2}\left(f+\frac{\partial^2 f}{\partial\theta_1^2}+\frac{\partial^2f}{\partial\theta_2^2}\right)-\frac{s^2}{2}f\left(f+2\frac{\partial^2 f}{\partial\theta_1^2}+2\frac{\partial^2f}{\partial\theta_2^2}\right)\right)(e^{i\theta_1},e^{i\theta_2})\\
&+\sqrt{2}s^2\left(\left(\frac{\partial f}{\partial\theta_1}\right)^2e^{i\theta_1},\left(\frac{\partial f}{\partial\theta_2}\right)^2e^{i\theta_2}\right)+T_1(s),
\end{align*}
where $T_1(s)$ is tangent to $L$ (once it is pulled back to $L$) plus terms of order $O(s^3)$.

We also see that
\begin{align*}
\langle X(s),X_1(s)\rangle &= 4s(1+sf)\textstyle\frac{\partial f}{\partial\theta_1},\\
\langle X(s),X_2(s)\rangle &= 4s(1+sf)\textstyle\frac{\partial f}{\partial\theta_2}.
\end{align*}
We deduce that
\begin{align*}
\frac{X(s)^{\perp}}{2}&=\frac{1}{2}(X(s)-g^{ij}\langle X(s),X_i(s)\rangle X_j(s))\\
&=\sqrt{2}\left(\frac{1}{2}+\frac{s}{2}f-s^2\left(\left(\frac{\partial f}{\partial\theta_1}\right)^2+\left(\frac{\partial f}{\partial\theta_2}\right)^2\right)\right)(e^{i\theta_1},e^{i\theta_2})+T_2(s),
\end{align*}
where, again, $T_2(s,t)$ is tangent to $L$ plus terms of order $O(s^3)$.

Overall, we see that
\begin{align}
\mathcal{S}_{0,2}(V,V)&=-\left(f\left(f+2\frac{\partial^2f}{\partial\theta_1^2}+2\frac{\partial^2f}{\partial\theta_2^2}\right)\right)(JX_1+JX_2)\nonumber\\
&-2\left(\frac{\partial f}{\partial\theta_2}\right)^2JX_1-2\left(\frac{\partial f}{\partial\theta_1}\right)^2JX_2
.\label{eq:S02}
\end{align}
We can then find $\mathcal{S}_{0,2}(V_A^{\perp},W_2)$ by polarizing \eqref{eq:S02} and using \eqref{eq:f1f2.VAW2}, i.e.
\begin{align*}
\mathcal{S}_{0,2}(V_A^{\perp},W_2)&=\frac{1}{2}\left(\mathcal{S}_{0,2}(V_A^{\perp}+W_2,V_A^{\perp}+W_2)-\mathcal{S}_{0,2}(V_A^{\perp},V_A^{\perp})-\mathcal{S}_{0,2}(W_2,W_2)\right)\\
&=\left(-f_1f_2-f_1\left(\frac{\partial^2 f_2}{\partial\theta_1^2}+\frac{\partial^2f_2}{\partial\theta_2^2}\right)-f_2\left(\frac{\partial^2 f_1}{\partial\theta_1^2}+\frac{\partial^2f_1}{\partial\theta_2^2}\right)\right)(JX_1+JX_2)\\
&\qquad-2\frac{\partial f_1}{\partial\theta_2}\frac{\partial f_2}{\partial\theta_2}JX_1
-2\frac{\partial f_1}{\partial\theta_1}\frac{\partial f_2}{\partial\theta_1}JX_2\\
&=\frac{19\cos(\theta_1+\theta_2)+65\cos(3\theta_1+3\theta_2)}{12}(JX_1+JX_2)
\end{align*}
Hence,
$$\pi_{\mathcal{K}}\big(\mathcal{S}_{0,2}(V_A^{\perp},W_2)\big)=-\frac{19}{12}V_A^{\perp}.$$
Equation \eqref{eq:proj.S0.VAperp.2} then follows.
\end{proof}

\begin{rem}
In the proof of Proposition \ref{prop:VAperp.obs}, we see that it is crucial that we know $W_2$ precisely in terms of $V_A^{\perp}$  to obtain that the right-hand side of \eqref{eq:proj.S0.VAperp.2} is non-zero.  In particular, by taking a different multiple of $W_2$ in \eqref{eq:proj.S0.VAperp.2} we can force the right-hand side to be zero.
\end{rem}


\begin{prop}\label{prop:obs} 
Recall the notation $\mathcal{S}_{0,k}$ from Proposition \ref{prop:VAperp.obs}.   
Let $U\in\mathcal{K}$ with $\|U\|_{C^{2,\alpha}}=1$ and let $W_2\in\mathcal{K}^{\perp}$ be the unique solution to
$$
\mathcal{L}_0(W_2)=-\mathcal{S}_{0,2}(U,U).
$$ 
There exists a constant $\delta>0$, independent of $U$, such that
\begin{equation}\label{eq:S0.U}
\|\mathcal{S}_{0,2}(U,U)\|_{C^{0,\alpha}}\geq \delta, \quad\pi_{\mathcal{K}}(\mathcal{S}_{0,2}(U,U))=0,
\quad
\|\pi_{\mathcal{K}}(\mathcal{S}_{0,3}(U,U,U))\|_{C^{0,\alpha}}\geq\delta,
\end{equation}
and
\begin{equation}\label{eq:U.obs}
\|\pi_{\mathcal{K}}\big(3\mathcal{S}_{0,2}(U,W_2)+\mathcal{S}_{0,3}(U,U,U)\big)\|_{C^{0,\alpha}}\geq\delta.
\end{equation}
\end{prop}

\begin{proof}
Let $a,b,c,d\in\bR$ so that
\begin{equation*}
U=aV_A^{\perp}+bV_B^{\perp}+cV_C^{\perp}+dV_D^{\perp}.
\end{equation*}
We explicitly compute $\mathcal{S}_0(sU)$, as we did for $V_A^{\perp}$ in the proof of Proposition \ref{prop:VAperp.obs} above (i.e.~the case $(a,b,c,d)=(1,0,0,0)$),  by setting
\begin{equation*}\label{eq:U.f}
f(\theta_1,\theta_2)=a\cos(\theta_1+\theta_2)+b\sin(\theta_1+\theta_2)+c\cos(\theta_1-\theta_2)-d\sin(\theta_1-\theta_2)
\end{equation*}
in \eqref{eq:V.f}.
In particular, one finds that
\begin{align*}
\pi_{\mathcal{K}}\circ\mathcal{S}_{0}(sU)&=\frac{s^3}{8}(a^2+b^2+18c^2+18d^2)(aV_A^{\perp}+b V_B^{\perp})\\
&\quad+\frac{s^3}{8}(18a^2+18b^2+c^2+d^2)(cV_C^{\perp}+dV_D^{\perp})+O(s^4).
\end{align*}
Equation \eqref{eq:S0.U} quickly follows from this formula and the explicit computation of $\mathcal{S}_{0,2}(U,U)$.
%
%

Equation \eqref{eq:U.obs} now follows from calculations just as in the proof of Proposition \ref{prop:VAperp.obs}.
\end{proof}

\subsection{Main result}

We now have all of the ingredients necessary to prove our local uniqueness result for the Clifford torus.  We consider here self-shrinkers with arbitrary space-time centres $(x_0,t_0) \in \bC^2\times\bR^+$, i.e.~satisfying \eqref{eq:shrinker2}.

\begin{thm}\label{thm:local.uniq}
Any 2-dimensional compact embedded self-shrinker in $\bC^2$ which is sufficiently $C^{2,\alpha}$-close to the Clifford torus $L$ is, up to some translation, dilation and rotation, equal to $L$.   
\end{thm}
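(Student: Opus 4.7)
The plan is a Lyapunov--Schmidt-type reduction combining the rotational gauge fixing of Lemma \ref{lem:slice}, the decomposition $\mathcal{S}_0=\mathcal{L}_0+\mathcal{Q}_0$ from Lemma \ref{lem:shrinker}, the cubic obstruction of Proposition \ref{prop:obs}, and the elliptic/quadratic estimates in Lemma \ref{lem:bounds}. First I would reduce to the standard self-shrinker equation \eqref{eq:shrinker}: a compact embedded self-shrinker $\Sigma$ sufficiently $C^{2,\alpha}$-close to $L$ satisfies \eqref{eq:shrinker2} for a unique space-time centre $(x_0,t_0)$ (as in the proof of Theorem \ref{thm:entropy.expand}) depending continuously on $\Sigma$, hence lying near $(0,1)$. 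After translating by $-x_0$ and dilating by $t_0^{-1/2}$, we may assume $\Sigma$ satisfies \eqref{eq:shrinker} and is still $C^{2,\alpha}$-close to $L$. By Lemma \ref{lem:slice}, after applying a rotation $A\in\SO(4)$, we have $\Sigma=L_V$ for some $V\in C^{2,\alpha}(\mathcal{U})\cap\Ker\pi_{\mathcal{Y}}$ of small norm, and the self-shrinker condition is equivalent to $\mathcal{S}_0(V)=\mathcal{L}_0V+\mathcal{Q}_0(V)=0$.

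Next, write $V=sU+W$, where $sU=\pi_{\mathcal{K}}(V)$ with $\|U\|_{C^{2,\alpha}}=1$ (assuming $s\neq 0$; otherwise the next step is vacuous) and $W=V-sU\in(\Ker\mathcal{L}_0)^{\perp}$. Since $\mathcal{L}_0$ is self-adjoint with kernel $\mathcal{K}$, projecting $\mathcal{S}_0(V)=0$ onto $(\Ker\mathcal{L}_0)^{\perp}=\Imm\mathcal{L}_0$ yields $\mathcal{L}_0W=-(\mathrm{id}-\pi_{\mathcal{K}})\mathcal{Q}_0(V)$. Applying both parts of Lemma \ref{lem:bounds} gives $\|W\|_{C^{2,\alpha}}\leq c_{\mathcal{L}}c_{\mathcal{Q}}(s+\|W\|_{C^{2,\alpha}})^2$, which bootstraps (for $\|V\|_{C^{2,\alpha}}$ small) to $\|W\|_{C^{2,\alpha}}=O(s^2)$. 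In particular $\pi_{\mathcal{K}}(W)=0$, so the hypothesis $\|\pi_{\mathcal{K}}(W)\|_{C^{2,\alpha}}=O(s^2)$ of Proposition \ref{prop:obs} is trivially satisfied. Projecting $\mathcal{S}_0(V)=0$ onto $\mathcal{K}$ then gives $\pi_{\mathcal{K}}\circ\mathcal{S}_0(sU+W)=0$, while Proposition \ref{prop:obs} guarantees $\|\pi_{\mathcal{K}}\circ\mathcal{S}_0(sU+W)\|_{C^{0,\alpha}}\geq\delta s^3$; these are compatible only if $s=0$, so $V=W\in(\Ker\mathcal{L}_0)^{\perp}$. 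The shrinker equation then reduces to $\mathcal{L}_0V=-\mathcal{Q}_0(V)$, and a final application of Lemma \ref{lem:bounds} gives $\|V\|_{C^{2,\alpha}}\leq c_{\mathcal{L}}c_{\mathcal{Q}}\|V\|_{C^{2,\alpha}}^2$, forcing $V=0$ once $\|V\|_{C^{2,\alpha}}$ is sufficiently small. Undoing the translation, dilation and rotation then shows that $\Sigma$ is equal to $L$ up to a rigid motion and a dilation.

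The main obstacle has already been overcome in Proposition \ref{prop:obs}: the infinitesimal deformations in $\mathcal{K}$ which do not come from rigid motions are genuinely obstructed, but only at cubic (rather than quadratic) order, which is precisely why the standard implicit-function-type argument cannot directly establish rigidity. The cubic nature of the obstruction is what forces the preliminary bootstrap giving $\|W\|_{C^{2,\alpha}}=O(s^2)$ to be carried out \emph{before} the obstruction estimate can be invoked; otherwise the $W$-contribution to $\pi_{\mathcal{K}}\circ\mathcal{S}_0$, which is a priori only controlled at order $O(s\|W\|_{C^{2,\alpha}})=O(s^2)$, would dominate the $\delta s^3$ lower bound and the argument would collapse.
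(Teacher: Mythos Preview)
Your proof is correct and follows essentially the same route as the paper: reduce to centre $(0,1)$ by translation and dilation, gauge-fix rotations via Lemma~\ref{lem:slice}, split $V=sU+W$ with $W\in(\Ker\mathcal{L}_0)^\perp$, use Proposition~\ref{prop:obs} to force $s=0$, and then Lemma~\ref{lem:bounds} to force $V=0$. The only difference is that you insert a preliminary bootstrap $\|W\|_{C^{2,\alpha}}=O(s^2)$ before invoking Proposition~\ref{prop:obs}; this is harmless but unnecessary, since $W\in(\Ker\mathcal{L}_0)^\perp=\mathcal{K}^\perp$ already gives $\pi_{\mathcal{K}}(W)=0$ directly (and that is the only hypothesis on $W$ in Proposition~\ref{prop:obs} beyond $sU+W\in\mathcal{U}_0$)---the bootstrap you describe is precisely what is carried out \emph{inside} the proof of Proposition~\ref{prop:obs}, not something needed to apply it.
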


\begin{proof}
Assume first that we have a sequence of self-shrinkers $L^j$ with centers $(x_0^j, t_0^j)$ converging in $C^{2,\alpha}$ to $L$.  Then we have  $(x_0^j, t_0^j) \rightarrow (0,1)$ and the shrinkers
$$ \tilde{L}^j = (t^j_0)^{-1/2}\left( L^j -x^j_0\right)
$$
also converge in $C^{2,\alpha}$ to $L$. It is thus sufficient to replace the sequence $L^j$ by the sequence of self-shrinkers $\tilde{L}^j$, which have centres at $(0,1)$.

To show that for large enough $j$, we have that up to a rotation $L^j = L$, we know by Lemma \ref{lem:shrinker} that we must show that the only solution to $\mathcal{S}_{0}(V)=0$ for $\|V\|_{C^{2,\alpha}}$ sufficiently small is $V=0$. We can therefore assume, for a contradiction, that we have sequence $V_k \rightarrow 0$ in $C^{2,\alpha}$ satisfying $\mathcal{S}_{0}(V_k)=0$. We now adapt the strategy laid out in Section \ref{sec:def and ob} to obtain our desired contradiction.

Since $\mathcal{S}_0$ is analytic, there exist $i$-linear forms $\mathcal{S}_{0,i}$, $i=2,3$, such that
$$\mathcal{S}_{0}(V) = \mathcal{L}_{0}(V) + \frac{\mathcal{S}_{0,2}}{2} (V,V) + \frac{\mathcal{S}_{0,3}}{6} (V,V,V) + O(\|V\|_{C^{2,\alpha}}^4)\, .$$

We can assume that   $V_k=s_kU_k+t_kW_k$ where $s_k\neq 0$, $U_k\in\mathcal{K}$ with $\|U_k\|_{C^{2,\alpha}}=1$, $W_k\in\mathcal{K}^{\perp}$ with $\|W_k\|_{C^{2,\alpha}}=1$, $(s_k,t_k)\to 0$ and $s_k>0$ and $t_k\geq 0$ (since we can just swap the sign of $U_k,W_k$ otherwise).
We have
\begin{align*}
0=\mathcal{S}(V_k) &=s_k\mathcal{L}_{0}(U_k)+t_k\mathcal{L}_0(W_k)
+\frac{s_k^2}{2}\mathcal{S}_{0,2}(U_k,U_k)+s_kt_k\mathcal{S}_{0,2}(U_k,W_k)\\
&\quad +\frac{t_k^2}{2}\mathcal{S}_{0,2}(W_k,W_k) +O(s_k^3,s_kt_k^2,s_k^2t_k,t_k^3).
\end{align*}

As $\mathcal{L}_0(U_k)=0$, and we know from \eqref{eq:S0.U} that $\mathcal{S}_{0,2}(U,U)\neq 0$ for all $U\in\mathcal{K}$ with $\|U\|_{C^{2,\alpha}}=1$, we must have $t_k> 0$ for all $k$. Since $U_k\in\mathcal{K}$ for all $k$, we see that
$$0=\mathcal{L}_0(W_k)+\frac{s_k^2}{2t_k}\mathcal{S}_{0,2}(U_k,U_k)+O(t_k, s_k ,s_k \frac{s_k^2}{t_k}).$$

From Lemma \ref{lem:bounds} we have
$$\|\mathcal{L}_0(W_k)\|_{C^{0,\alpha}}\geq c,$$
 and by \eqref{eq:S0.U}
$$\|\mathcal{S}_{0,2}(U_k,U_k)\|_{C^{0,\alpha}}\geq c'>0$$
for all $k$.  Since  $(s_k,t_k)\to 0$, we can therefore assume that
$$\frac{t_k}{s_k^2}\to \lambda> 0$$
 as $k\to 0$.  
Therefore, 
$$t_k=\lambda s_k^2+u_k$$
where 
$$\frac{u_k}{s_k^2}\to 0.$$

We deduce that
$$\mathcal{L}_0(W_k)+\frac{1}{2\lambda}\mathcal{S}_{0,2}(U_k,U_k)\to 0,$$
in $C^{0,\alpha}$. Since  $\mathcal{S}_{0,2}(U_k,U_k)\in\Imm\mathcal{L}_0$ by \eqref{eq:S0.U}, this determines $W_k$ uniquely up to an error that tends to $0$ as $k\to 0$: i.e.~if we set 
$$W_k^0=-\frac{1}{2\lambda}\mathcal{L}_0^{-1}\circ\mathcal{S}_{0,2}(U_k,U_k)\, ,$$
then
$$W_k=W_k^0+E_k$$
where
$$\|E_k\|_{C^{2,\beta}}\to 0.$$
Furthermore, note that $E_k \in \mathcal{K}^\perp$. Therefore,
\begin{align*}
0&=\mathcal{L}_0(E_k)+\left(\frac{s_k^2}{2\lambda s_k^2+2u_k}-\frac{1}{2\lambda}\right)\mathcal{S}_{0,2}(U_k,U_k)+O(s_k)\\
&=\mathcal{L}_0(E_k)-\frac{u_k}{2\lambda^2 s_k^2}\mathcal{S}_{0,2}(U_k,U_k)+O\left(s_k,\frac{u_k^2}{s_k^4}\right).
\end{align*}
We deduce that 
\begin{align*}
 \|\mathcal{L}_0(E_k)\|_{C^{0,\alpha}},\|E_k\|_{C^{2,\alpha}}\leq C\max\left\{\frac{|u_k|}{s_k^2},s_k\right\}.
\end{align*}
We then see that
\begin{align*}
0&=s_k^2\left(\lambda\mathcal{L}_0(W_k)+\frac{1}{2}\mathcal{S}_{0,2}(U_k,U_k)\right)\\
&\quad +u_k\mathcal{L}_0(W_k)+\lambda s_k^3\mathcal{S}_{0,2}(U_k,W_k)+\frac{s_k^3}{6}\mathcal{S}_{0,3}(U_k,U_k,U_k)+o(s_k^3)\\
&=\lambda s_k^2\mathcal{L}_0(E_k)+u_k\mathcal{L}_0(W_k)\\
&\quad +\frac{s_k^3}{6}(6\lambda \mathcal{S}_{0,2}(U_k,W_k)+\mathcal{S}_{0,3}(U_k,U_k,U_k)) +o(s_k^3).
\end{align*} 
Taking the projection onto $\mathcal{K}$ we see that
\begin{align*}
0=\pi_{\mathcal{K}}(6\lambda \mathcal{S}_{0,2}(U_k,W_k)+\mathcal{S}_{0,3}(U_k,U_k,U_k))+o(1).
\end{align*}
We also know that $W_k=W_k^0+E_k$ where $\|E_k\|_{C^{2,\alpha}}\to 0$, so we deduce that
$$\pi_{\mathcal{K}}(3\mathcal{S}_{0,2}(U_k,2\lambda W_k^0)+\mathcal{S}_{0,3}(U_k,U_k,U_k))\to 0.$$
Recall that $W_k^0\in\mathcal{K}^{\perp}$ is uniquely determined by $U_k$ via $$\mathcal{L}_0(2\lambda W_k^0)=-\mathcal{S}_{0,2}(U_k,U_k),$$ and we have from \eqref{eq:U.obs} in Proposition \ref{prop:obs} that
$$\|\pi_{\mathcal{K}}(3\mathcal{S}_{0,2}(U_k,2\lambda W_k^0)+\mathcal{S}_{0,3}(U_k,U_k,U_k))\|_{C^{0,\alpha}}\geq c>0$$
since $\|U_k\|_{C^{2,\alpha}}=1$, which yields a contradiction.  
 
This means that we must have $s_k=0$ for all $k$, i.e.~that $V_k\in\mathcal{K}^{\perp}$ for all $k$. Up to a subsequence we can assume that $V_k \rightarrow V \in\mathcal{K}^{\perp}$ in $C^{2,\beta}$. This yields a contradiction since necessarily in the limit $V$ has to satisfy $\mathcal{L}_0(V) = 0$. 
\end{proof}


\providecommand{\bysame}{\leavevmode\hbox to3em{\hrulefill}\thinspace}
\providecommand{\MR}{\relax\ifhmode\unskip\space\fi MR }
\providecommand{\MRhref}[2]{%
  \href{http://www.ams.org/mathscinet-getitem?mr=#1}{#2}
}
\providecommand{\href}[2]{#2}

\end{document}